\documentclass[sort&compress,3p]{elsarticle}
\usepackage{algorithm}
\usepackage{algpseudocode}
\usepackage{amsmath}
\usepackage{amssymb}
\usepackage{amsthm}
\usepackage{array}
\usepackage{booktabs}
\usepackage{breqn}
\usepackage{changepage}
\usepackage{collcell}
\usepackage{color}
\usepackage{diagbox}
\usepackage{enumerate}
\usepackage{enumitem}
\usepackage{epstopdf}
\usepackage{float}
\usepackage{geometry}
\usepackage{graphics}
\usepackage{graphicx}
\usepackage{hyperref}
\usepackage{latexsym}
\usepackage{longtable}
\usepackage{mathtools}
\usepackage{mdframed}
\usepackage{pdflscape}
\usepackage{pgf}
\usepackage{pifont}
\usepackage{siunitx}
\usepackage{soul}
\usepackage{subcaption}
\usepackage{tabularx}
\usepackage{tikz}
\usepackage{ulem}
\usepackage[utf8]{inputenc}
\usepackage{xcolor}
\usepackage{xspace}



%
%

\sisetup{output-exponent-marker=\ensuremath{\mathrm{e}}}

\graphicspath{ {./IMAGES/} }
\DeclarePairedDelimiter\abs{\lvert}{\rvert}%
\DeclarePairedDelimiter\norm{\lVert}{\rVert}%

\makeatletter
\let\OldStatex\Statex
\renewcommand{\Statex}[1][3]{%
  \setlength\@tempdima{\algorithmicindent}%
  \OldStatex\hskip\dimexpr#1\@tempdima\relax}
\makeatother


\makeatletter
\let\oldabs\abs
\def\abs{\@ifstar{\oldabs}{\oldabs*}}
\let\oldnorm\norm
\def\norm{\@ifstar{\oldnorm}{\oldnorm*}}
\makeatother

\newcommand{\I}{\mathbf{I}}

\newcommand{\J}{\mathbf{J}}

\newcommand{\A}{\mathbf{A}}
\newcommand{\Mathematica}{\texttt{Mathematica}\textsuperscript{\textregistered}\,}
\newcommand{\Matlab}{\texttt{MATLAB}\textsuperscript{\textregistered}\,}
\newcommand{\Matlode}{\texttt{MATLODE}\textsuperscript{\textregistered}\,}

\newtheorem{theorem}{Theorem}[section]
\newtheorem{remark}[theorem]{Remark}
\newtheorem{lemma}[theorem]{Lemma}
\newtheorem{definition}[theorem]{Definition}

\newtheorem{corollary}[theorem]{Corollary}

\newcommand*{\MinNumber}{0.5}%
\newcommand*{\MaxNumber}{1.8}%

\newcommand{\ApplyGradient}[1]{%
    \pgfmathsetmacro{\PercentColor}{100.0*(#1-\MinNumber)/(\MaxNumber-\MinNumber)}
    \textcolor{black!\PercentColor}{#1}
}
\newcolumntype{R}{>{\collectcell\ApplyGradient}{r}<{\endcollectcell}}

\title{EPIRK-$W$ and EPIRK-$K$ time discretization methods}

\begin{document}

\author[aff1]{Mahesh Narayanamurthi}
\ead{maheshnm@vt.edu}
\author[aff1]{Paul Tranquilli}
\ead{ptranq@vt.edu}
\author[aff1]{Adrian Sandu}
\ead{sandu@cs.vt.edu}
\author[aff2]{Mayya Tokman}
\ead{mtokman@ucmerced.edu}
\address[aff1]{Computational Science Laboratory, Department of Computer Science, Virginia Tech, Blacksburg, VA 24060}
\address[aff2]{School of Natural Sciences, University of California, Merced, CA 95343}

\thispagestyle{empty}
\setcounter{page}{0}

\makeatletter
\def\Year#1{%
  \def\yy@##1##2##3##4;{##3##4}%
  \expandafter\yy@#1;
}
\makeatother

\begin{Huge}
\begin{center}
Computational Science Laboratory Technical Report CSL-TR-\Year{\the\year}-{\tt 2} \\
\today
\end{center}
\end{Huge}
\vfil
\begin{huge}
\begin{center}
Mahesh Narayanamurthi, Paul Tranquilli, Adrian Sandu and  Mayya Tokman
\end{center}
\end{huge}

\vfil
\begin{huge}
\begin{it}
\begin{center}
``{\tt EPIRK-$W$ and EPIRK-$K$ time discretization methods}''
\end{center}
\end{it}
\end{huge}
\vfil

\begin{large}
\begin{center}
Computational Science Laboratory \\
Computer Science Department \\
Virginia Polytechnic Institute and State University \\
Blacksburg, VA 24060 \\
Phone: (540)-231-2193 \\
Fax: (540)-231-6075 \\ 
Email: \url{maheshnm@vt.edu} \\
Web: \url{http://csl.cs.vt.edu}
\end{center}
\end{large}

\vspace*{1cm}

\begin{tabular}{ccc}
\includegraphics[width=2.5in]{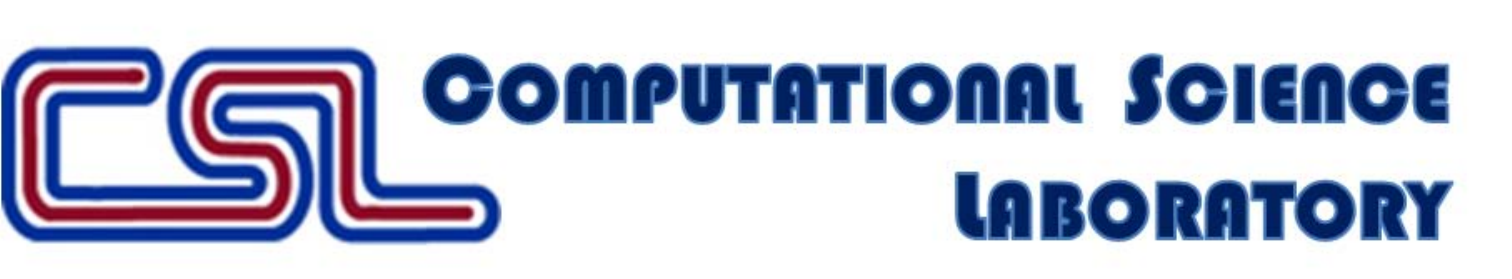}
&\hspace{2.5in}&
\includegraphics[width=2.5in]{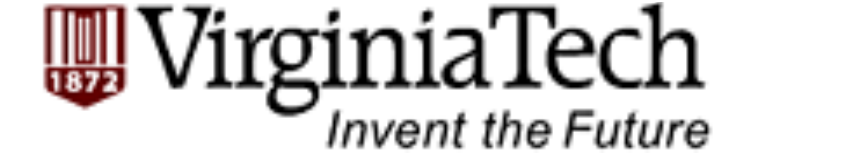} \\
{\bf\large \textit{Compute the Future}} &&\\
\end{tabular}

\newpage

\begin{abstract}
Exponential integrators are special time discretization methods where the traditional linear system solves used by implicit schemes are replaced with computing the action of matrix exponential-like functions on a vector. A very general formulation of exponential integrators is offered by the  Exponential Propagation Iterative methods of Runge-Kutta type (EPIRK) family of schemes. 
The use of Jacobian approximations is an important strategy to drastically reduce the overall computational costs of implicit schemes while maintaining the quality of their solutions.  This paper extends the EPIRK class to allow the use of inexact Jacobians as arguments of the matrix exponential-like functions. Specifically, we develop two new families of methods: EPIRK-$W$ integrators that can accommodate any approximation of the Jacobian, and EPIRK-$K$ integrators that rely on a specific Krylov-subspace projection of the exact Jacobian. Classical order conditions theories are constructed for these families. A practical EPIRK-$W$ method of order three and an EPIRK-$K$ method of order four are developed. Numerical experiments indicate that the methods proposed herein are computationally favorable when compared to existing exponential integrators.
\end{abstract}

\maketitle

\tableofcontents

\section{Introduction}

%
The following initial value problem for a system of ordinary differential equations (ODEs)
\begin{equation}
\label{eqn:ode-problem}
y' = f(t, y), \quad y(t_{0}) = y_{0}, \quad t_0 \le t \le t_F, \quad y(t) \in \mathbb{R}^N.
\end{equation}
arises in many applications including numerical solution of partial differential equations (PDEs) using a method of lines.
Different time discretization methods can then be applied to solve \eqref{eqn:ode-problem} and approximate numerical solutions $y_{n}\, \approx y(t_n)$ at discrete times $t_n$.

Runge-Kutta methods \cite{Hairer_book_I} are the prototypical one-step discretizations that use internal stage approximations to propagate the numerical solution forward in time from $t_n$ to $t_{n+1}=t_n+h$. Explicit Runge-Kutta methods \cite[Section II.2]{Hairer_book_I} perform inexpensive calculations at each step, but suffer from stability restrictions on the time step size which makes them inappropriate for solving stiff systems. Implicit Runge-Kutta methods \cite[Section II.7]{Hairer_book_I} require solution of non-linear systems of equations at each step that can be done, for instance, by using a Newton-like approach. This eases the numerical stability restrictions and allows to use large time step sizes for stiff problems, however it also increases the computational cost per step.

Rosenbrock methods \cite[Section IV.7]{Hairer_book_II} arise from linearization of diagonally implicit Runge-Kutta methods, and solve only linear systems of equations at each time step. The Jacobian of the ODE function \eqref{eqn:ode-problem}
\begin{equation}
\label{eqn:ode-jacobian}
\mathbf{J}(t,y) =  \frac{\partial f(t, y)}{\partial y}
\end{equation}
appears explicitly in the formulation of the method, and the order conditions theory relies on using an exact Jacobian during computations. For the solution of large linear systems a popular implementation uses Krylov projection-based iterative methods such as GMRES. Early truncation of the iterative method is equivalent to the use of an approximation of the Jacobian, and the overall scheme may suffer from order reduction \cite{Tranquilli_2016_JacVec}. Rosenbrock-W (ROW) methods \cite[Section IV.7]{Hairer_book_II}  mitigate this by admitting arbitrary approximations of the Jacobian ($\mathbf{A} \approx \mathbf{J}$). This is possible at the cost of a vastly increased number of order conditions, and by using many more stages to allow for enough degrees of freedom to satisfy them. Rosenbrock-K (ROK) methods  \cite{Tranquilli_2014_ROK} are built in the ROW framework by making use of a specific Krylov-based approximation of the Jacobian. This approximation leads to a number of trees that arise in ROW order conditions theory to become isomorphic to one another, leading to a dramatic reduction in the total number of order conditions.

Exponential integrators \cite{Hochbruck_1997_exp,Tokman_2006_EPI,Hochbruck_2010_exp,Tokman_2011_EPIRK, Schulze_2008_exp} are a class of timestepping methods that replace the linear system solves in Rosenbrock style methods with operations involving the action of matrix exponential-like functions on a vector. For many problems these products of a matrix exponential-like function with a vector can be evaluated more efficiently than solving the corresponding linear systems.

Both the $W$-versions \cite{Hochbruck_1998_exp}  and $K$-versions \cite{Tranquilli_2014_ExpK} of exponential methods have been established for particular formulations of exponential methods. This paper extends the theory of $W$- and $K$-type methods to the general class of Exponential Propagation Iterative methods of Runge-Kutta type (EPIRK) \cite{Tokman_2006_EPI,Tokman_2011_EPIRK}.

The remainder of the paper is laid out as follows. In Section \ref{sec:EPIRK}, we describe the general form of the EPIRK method and some simplifying assumptions that we make in this paper. In Section \ref{sec:EPIRK-W}, we derive the order conditions for an EPIRK-$W$ method and construct methods with two different sets of coefficients. In Section \ref{sec:EPIRK-K}, we extend the K-theory to EPIRK method and construct an EPIRK-$K$ method. Section \ref{sec:Implementations} addresses strategies to evaluate products involving exponential like functions of matrices and vectors  that are present in the formulation of methods discussed in this paper. Section \ref{sec:Results} presents the numerical results, and conclusions are drawn in Section \ref{sec:Conclusions}.

\section{Exponential Propagation Iterative methods of Runge-Kutta type}
\label{sec:EPIRK}

A very general formulation of exponential integrators is offered by the EPIRK family of methods, which was first introduced in \cite{Tokman_2006_EPI}. As is typical  for exponential methods, the formulation of EPIRK is derived from the following integral form of the problem \eqref{eqn:ode-problem} on the time interval $[t_n,t_{n}\,+h]$:
\begin{eqnarray}
\label{eqn:Exponential_Integrator}
y(t_{n} + h) &=& y_{n}\, + h\, \varphi_1\bigl(h\, \mathbf{J}_{n}\,\bigr)\,f(y_{n}) + h \displaystyle\int_{0}^{1} e^{h\,\mathbf{J}_{n}\,(1-\theta)} \,r\big(y(t_{n}\, + \theta h)\big)\, d\theta.
\end{eqnarray}
Equation \eqref{eqn:Exponential_Integrator} is  derived by splitting the right hand-side function of \eqref{eqn:ode-problem} into a linear term and non-linear remainder using first-order Taylor expansion
\begin{equation}
y' = f(y_{n}) + \mathbf{J}_{n}\, (y - y_{n}) + r(y), \quad r(y) := f(y) - f(y_{n}) - \mathbf{J}_{n}\, (y - y_{n}),
\label{eq:ry}
\end{equation}
and using the integrating factor method to transform this ODE into an integral equation \cite{Tokman_2011_EPIRK}. In equation \eqref{eqn:Exponential_Integrator} the Jacobian matrix \eqref{eqn:ode-jacobian} evaluated at time $t_n$ is $\mathbf{J}_n=\mathbf{J}(t_n,y_n)$, and $r(y)$ is the non-linear remainder term of the Taylor expansion of the right-hand side function.
The function $\varphi_1(z) = (e^z - 1)/z$ is the first one in the sequence of analytic functions defined by
\begin{eqnarray}
\label{eqn:phi_k}
\varphi_k(z) = \int_{0}^1 e^{z\,(1 - \theta)} \frac{\theta^{k-1}}{(k-1)!} \, d\theta
= \sum_{i=0}^\infty \frac{z^i}{(i+k)!}, \hspace{1cm} k = 1, 2, \hdots,
\end{eqnarray}
and satisfying the recurrence relation
\begin{eqnarray}
\label{eqn:phi_k_recursive_formulation_and_phi_k_0}
\varphi_{0}(z) = e^z; \quad
\varphi_{k+1}(z) = \frac{\varphi_k(z) - 1/k!}{z}, ~~k = 1, 2, \hdots; \quad \varphi_k(0) = \frac{I}{k!}.
\end{eqnarray}

As explained in \cite{Tokman_2011_EPIRK}, the construction of exponential numerical integrators requires to numerically approximate the integral term in equation  \eqref{eqn:Exponential_Integrator}, and to compute exponential-like matrix function times vector products $\varphi_k(\cdot)\,v$ that include the second term in the right hand side of \eqref{eqn:Exponential_Integrator} as well as additional terms needed by the integral approximation.

Construction of an EPIRK-type scheme begins by considering a general ansatz \cite{Tokman_2011_EPIRK} that describes an $s$-stage method:
\begin{equation}
\begin{split}
Y_i &= y_{n}\, + a_{i,1}\,  \psi_{i,1}(g_{i,1}\,h\mathbf{A}_{i,1})\, hf(y_{n}) + \displaystyle \sum_{j = 2}^{i} a_{i,j}\, \psi_{i,j}(g_{i,j}h\,\mathbf{A}_{i,j})\, h\Delta^{(j-1)}r(y_{n}), \quad i = 1, \hdots, s - 1, \\
y_{n+1} &= y_{n}\, + b_{1}\, \psi_{s,1}(g_{s,1}\,h\mathbf{A}_{s,1})\, hf(y_{n}) + \displaystyle\sum_{j = 2}^{s} b_{j}\, \psi_{s,j}(g_{s,j}h\,\mathbf{A}_{s,j})\, h\Delta^{(j-1)}r(y_{n}),
\label{eq:genEPIRK}
\end{split}
\end{equation}
where $Y_i$ are the internal stage vectors; $y_n$, also denoted as $Y_0$, is the input stage vector; and $y_{n+1}$ is the final output stage of a single step. The $j$-th forward difference $\Delta^{(j)}r(y_{n})$ is calculated using the residual values at the stage vectors:
\[
\Delta^{(1)}r(Y_i) = r(Y_{i+1}) - r(Y_i), \quad
\Delta^{(j)}r(Y_i)  = \Delta^{(j - 1)}r(Y_{i+1}) - \Delta^{(j - 1)}r(Y_{i}).
\]
Particular choices of matrices $\mathbf{A}_{ij}$, functions $\psi_{ij}$ and $r(y)$ define classes of EPIRK methods as well as allow derivation of specific schemes. In this paper we will focus on general, so called unpartitioned, EPIRK methods which can be constructed from \eqref{eq:genEPIRK} by choosing all matrices as a Jacobian matrix evaluated at $t_n$, i.e. $\mathbf{A}_{ij} = \mathbf{J}_n$, function $r(y)$ to be the remainder of the first-order Taylor expansion of $f(y)$ as in \eqref{eq:ry} and functions $\psi_{ij}$ to be linear combinations of $\varphi_{k}$'s:
\begin{eqnarray}
\psi_{i,j}(z) = \displaystyle\sum_{k=1}^{s} p_{i,j,k}\, \varphi_k(z).
\label{eqn:psi_function_definition}
\end{eqnarray}
Here we will also use the simplifying assumption from \cite[eq.~25]{Tokman_2011_EPIRK} 
\begin{eqnarray}
\label{eqn:simplified_psi_function_defn}
\psi_{i,j}(z) = \psi_{j}(z) = \displaystyle\sum_{k=1}^{j} p_{j,k}\, \varphi_k(z).
\end{eqnarray}
These choices lead to the class of general unpartitioned EPIRK schemes described by
\begin{equation}
\label{eqn:EPIRK-Formulation}
\begin{split}
Y_i &= y_{n}\, + a_{i,1}\,  \psi_{1}(g_{i,1}\,h\mathbf{J}_{n})\, hf(y_{n}) + \displaystyle \sum_{j = 2}^{i} a_{i,j}\, \psi_{j}(g_{i,j}h\,\mathbf{J}_{n})\, h\Delta^{(j-1)}r(y_{n}), \quad i = 1, \hdots, s - 1, \\
y_{n+1} &= y_{n}\, + b_{1}\, \psi_{1}(g_{s,1}\,h\mathbf{J}_{n})\, hf(y_{n}) + \displaystyle\sum_{j = 2}^{s} b_{j}\, \psi_{j}(g_{s,j}h\,\mathbf{J}_{n})\, h\Delta^{(j-1)}r(y_{n}).
\end{split}
\end{equation}
Coefficients $p_{j,k}$, $a_{i,j}\,$, $g_{i,j}\,$, and $b_{j}\,$ have to be determined from order conditions to build schemes of specific order. 
\begin{remark}
Computationally efficient strategies to evaluate exponential-like matrix function times vector products vary depending on the size of the ODE system. Approximating products $\psi(z)v$ constitutes the major portion of overall computational cost of the method.  Taylor expansion or Pade based approximations can be used to evaluate the products $\psi(z)v$ for small systems.  For large scale problems, particularly those where matrices cannot be stored explicitly and a matrix-free formulation of an integrator is a necessity, Krylov projection-based methods become the de facto choice \cite{Tokman_2011_EPIRK}.
\end{remark}

EPIRK methods of a given order can be built by solving either classical \cite{Tokman_2011_EPIRK} or stiff \cite{Rainwater_Tokman_2016} order conditions. In this paper we will focus on the classically accurate EPIRK methods.  A classical EPIRK scheme is constructed by matching required number of Taylor expansion coefficients of the exact and approximate solutions to derive order conditions.  The order conditions are then solved to compute the coefficients of the method.  To ensure that the error has the desired order the exact Jacobian matrix $\mathbf{J}_{n}$ has to be used in the Taylor expansion of the numerical solution.  If the Jacobian or its action on a vector is not computed with full accuracy, the numerical method will suffer from order reduction.

In this paper we construct EPIRK methods that retain full accuracy even with an approximated Jacobian. The theory of using inexact Jacobians in the method formulation, first proposed in \cite{Steihaug_1979}  for implicit schemes, is extended to EPIRK methods. More specifically, EPIRK-$W$ methods that admit arbitrary approximations of the Jacobian while maintaining full order of accuracy are derived in Section \ref{sec:EPIRK-W}. The drawback of $W$-methods, as mentioned in \cite[~section IV.7]{Hairer_book_II}, is the very fast growth of the number of order conditions with increasing order of the method. In such cases, significantly larger number of stages are typically needed to build higher-order methods.

In \cite{Tranquilli_2014_ROK,Tranquilli_2014_ExpK} we developed $K$-methods, versions of $W$-methods that use a specific Krylov-subspace approximation of the Jacobian, and dramatically reduce the number of necessary order conditions.  Here we construct $K$-methods for EPIRK schemes in Section \ref{sec:EPIRK-K}. The $K$-method theory enables the construction of high order methods with significantly fewer stages than $W$-methods. For example, we show that three stage EPIRK $W$-methods only exist up to order three, whereas we derive here a fourth order $K$-method. Furthermore, as shown in the the numerical results in Section \ref{sec:Results}, for some problems $K$-methods have better computational performance than traditional exponential integrators.

\section{EPIRK-$W$ methods}
\label{sec:EPIRK-W}

An EPIRK-$W$ method is formulated like a traditional EPIRK method \eqref{eqn:EPIRK-Formulation} with the only difference being the use of an inexact Jacobian ($\mathbf{A}_{n}$) in place of the exact Jacobian ($\mathbf{J}_{n}$). Using the simplification \eqref{eqn:simplified_psi_function_defn} the method reads:
\begin{equation}
\label{eqn:EPIRK-W-Formulation}
\begin{split}
Y_i &= y_{n} + a_{i,1}\,  \psi_{1}(g_{i,1}\,\,h\,\mathbf{A}_{n})\, hf(y_{n}) + \displaystyle \sum_{j = 2}^{i} a_{i,j}\, \psi_{j}(g_{i,j}\,h\,\mathbf{A}_{n})\, h\Delta^{(j-1)}r(y_{n}), \quad i = 1, \hdots, s - 1, \\
y_{n+1} &= y_{n}\, + b_{1}\, \psi_{1}(g_{s,1}\,h\,\mathbf{A}_{n})\, hf(y_{n}) + \displaystyle\sum_{j = 2}^{s} b_{j}\, \psi_{j}(g_{s,j}\,h\,\mathbf{A}_{n})\, h\Delta^{(j-1)}r(y_{n}).
\end{split}
\end{equation}
This modification requires the order conditions theory to be modified to accommodate this approximation.

\subsection{Order conditions theory for EPIRK-$W$ methods}
\label{sec:EPIRK-W-order}
The classical order conditions for EPIRK-$W$ methods result from matching the Taylor series expansion coefficients of the numerical solution $y_{n+1}$ up to a certain order with those from the Taylor series expansion of the exact solution $y(t_{n}\, + h)$. The construction of order conditions is conveniently expressed in terms of Butcher trees \cite{Hairer_book_I}. The trees corresponding to the elementary differentials of the numerical solution are the \textit{TW}-trees. \textit{TW}-trees are rooted Butcher trees with two different colored nodes - fat (empty) and meagre (full) - such that the end vertices are meagre and the fat vertices are singly branched. The \textit{TW}-trees up to order four are shown in Tables \ref{Table:TWtrees1} and \ref{Table:TWtrees2} following \cite{Tranquilli_2014_ExpK}.

\begin{table}[!htb]
\centering
\def\arraystretch{1.5}
\footnotesize
\begin{tabular}{|>{\centering\arraybackslash}m{1.2in}|>{\centering\arraybackslash}m{1in}|>{\centering\arraybackslash}m{1in}|>{\centering\arraybackslash}m{1in}|>{\centering\arraybackslash}m{1in}|}
\hline
$\tau$ & \begin{tikzpicture}[scale=.5]
      [meagre/.style={circle,draw, fill=black!100,thick},
      fat/.style={circle,draw,thick}]
      \node[circle,draw,, fill=black!100,thick] (j) at (0,0) [label=right:$j$] {};
  \end{tikzpicture} & \begin{tikzpicture}[scale=.5]
      [meagre/.style={circle,draw, fill=black!100,thick},
      fat/.style={circle,draw,thick}]
      \node[circle,draw, fill=black!100,thick] (j) at (0,0) [label=right:$j$] {};
      \node[circle,draw, fill=black!100,thick] (k) at (1,1) [label=right:$k$] {};
      \draw[-] (j) -- (k);
  \end{tikzpicture} & \begin{tikzpicture}[scale=.5]
      [meagre/.style={circle,draw, fill=black!100,thick},
      fat/.style={circle,draw,thick}]
      \node[circle,draw, thick] (j) at (0,0) [label=right:$j$] {};
      \node[circle,draw, fill=black!100,thick] (k) at (1,1) [label=right:$k$] {};
      \draw[-] (j) -- (k);
  \end{tikzpicture} & \begin{tikzpicture}[scale=.5]
      [meagre/.style={circle,draw, fill=black!100,thick},
      fat/.style={circle,draw,thick}]
      \node[circle,draw, fill=black!100,thick] (j) at (0,0) [label=right:$j$] {};
      \node[circle,draw, fill=black!100,thick] (k) at (1,1) [label=right:$k$] {};
      \node[circle,draw, fill=black!100,thick] (l) at (-1,1) [label=left:$l$] {};
      \draw[-] (j) -- (k);
      \draw[-] (j) -- (l);
  \end{tikzpicture} \\
\hline
{W-tree name} & {$\tau^{W}_{1}$} & {$\tau^{W}_{2}$} & {$\tau^{W}_{3}$} & {$\tau^{W}_{4}$}  \\
\hline
{K-tree name} & {$\tau^{K}_{1}$} & {$\tau^{K}_{2}$} & {--} & {$\tau^{K}_{3}$}  \\
\hline
$F(\tau)$ & $f^J$ & $f^J_Kf^K$ & $\A_{JK}f^K$ & $f^J_{KL}f^Kf^L$    \\
\hline
$\mathsf{a}(\tau)$ & $x_1$ & $x_2$ & $x_3$ & $x_4$  \\
\hline
$B^\#\left(hf(B(\mathsf{a},y))\right)$ & 1 & $x_1$ & $0$ & $x_1^2$  \\
\hline
$B^\#\left(h\A B(\mathsf{a},y)\right)$ & $0$ & $0$ & $x_1$ & $0$ \\
\hline
$B^\#\left(\varphi_j(h\, \A)B(\mathsf{a},y)\right)$ & $c_0x_1$ & $c_0x_2$ & $c_0x_3 + c_1x_1$ & $c_0x_4$ \\
\hline
\hline 
$\tau$ & \begin{tikzpicture}[scale=.5]
    [meagre/.style={circle,draw, fill=black!100,thick},
    fat/.style={circle,draw,thick}]
    \node[circle,draw, fill=black!100,thick] (j) at (0,0) [label=right:$j$] {};
    \node[circle,draw, fill=black!100,thick] (k) at (1,1) [label=right:$k$] {};
    \node[circle,draw, fill=black!100,thick] (l) at (0,2) [label=left:$l$] {};
    \draw[-] (j) -- (k);
    \draw[-] (k) -- (l);
\end{tikzpicture} & \begin{tikzpicture}[scale=.5]
      [meagre/.style={circle,draw, fill=black!100,thick},
      fat/.style={circle,draw,thick}]
      \node[circle,draw, fill=black!100,thick] (j) at (0,0) [label=right:$j$] {};
      \node[circle,draw, thick] (k) at (1,1) [label=right:$k$] {};
      \node[circle,draw, fill=black!100,thick] (l) at (0,2) [label=left:$l$] {};
      \draw[-] (j) -- (k);
      \draw[-] (k) -- (l);
  \end{tikzpicture} & \begin{tikzpicture}[scale=.5]
      [meagre/.style={circle,draw, fill=black!100,thick},
      fat/.style={circle,draw,thick}]
      \node[circle,draw,thick] (j) at (0,0) [label=right:$j$] {};
      \node[circle,draw, fill=black!100, thick] (k) at (1,1) [label=right:$k$] {};
      \node[circle,draw, fill=black!100,thick] (l) at (0,2) [label=left:$l$] {};
      \draw[-] (j) -- (k);
      \draw[-] (k) -- (l);
  \end{tikzpicture} & \begin{tikzpicture}[scale=.5]
      [meagre/.style={circle,draw, fill=black!100,thick},
      fat/.style={circle,draw,thick}]
      \node[circle,draw, thick] (j) at (0,0) [label=right:$j$] {};
      \node[circle,draw, thick] (k) at (1,1) [label=right:$k$] {};
      \node[circle,draw, fill=black!100,thick] (l) at (0,2) [label=left:$l$] {};
      \draw[-] (j) -- (k);
      \draw[-] (k) -- (l);
  \end{tikzpicture} \\
\hline
{W-tree name} & {$\tau^{W}_{5}$} & {$\tau^{W}_{6}$} & {$\tau^{W}_{7}$} & {$\tau^{W}_{8}$} \\
\hline
{K-tree name} & {$\tau^{K}_{4}$} & {--} & {--} & {--} \\
\hline
$F(\tau)$ & $f^J_Kf^K_Lf^L$ & $f^J_K\A_{KL}f^L$ & $\A_{JK}f^K_Lf^L$ & $\A_{JK}\A_{KL}f^L$  \\
\hline
$\mathsf{a}(\tau)$ & $x_5$ & $x_6$ & $x_7$ & $x_8$  \\
\hline
$B^\#\left(hf(B(\mathsf{a},y))\right)$ & $x_2$ & $x_3$ & $0$ & $0$  \\
\hline
$B^\#\left(h\A B(\mathsf{a},y)\right)$  & $0$ & $0$ & $x_2$ & $x_3$  \\
\hline
$B^\#\left(\varphi_j(h\, \A)B(\mathsf{a},y)\right)$  & $c_0x_5$ & $c_0x_6$ & $c_0x_7 + c_1x_2$ & $c_0x_8 + c_1x_3 + c_2x_1$  \\
\hline
\hline 
$\tau$  & \begin{tikzpicture}[scale=.5]
    [meagre/.style={circle,draw, fill=black!100,thick},
    fat/.style={circle,draw,thick}]
    \node[circle,draw, fill=black!100,thick] (j) at (0,0) [label=right:$j$] {};
    \node[circle,draw, fill=black!100,thick] (k) at (-1,1) [label=left:$k$] {};
    \node[circle,draw, fill=black!100,thick] (l) at (0,1) [label=above:$l$] {};
    \node[circle,draw, fill=black!100,thick] (m) at (1,1) [label=right:$m$] {};
    \draw[-] (j) -- (k);
    \draw[-] (j) -- (l);
    \draw[-] (j) -- (m);
\end{tikzpicture} & \begin{tikzpicture}[scale=.5]
    [meagre/.style={circle,draw, fill=black!100,thick},
    fat/.style={circle,draw,thick}]
    \node[circle,draw, fill=black!100,thick] (j) at (0,0) [label=right:$j$] {};
    \node[circle,draw, fill=black!100,thick] (k) at (-1,1) [label=left:$k$] {};
    \node[circle,draw, fill=black!100,thick] (l) at (1,1) [label=right:$l$] {};
    \node[circle,draw, fill=black!100,thick] (m) at (1,2) [label=right:$m$] {};
    \draw[-] (j) -- (k);
    \draw[-] (j) -- (l);
    \draw[-] (l) -- (m);
\end{tikzpicture} &  \begin{tikzpicture}[scale=.5]
      [meagre/.style={circle,draw, fill=black!100,thick},
      fat/.style={circle,draw,thick}]
      \node[circle,draw, fill=black!100,thick] (j) at (0,0) [label=right:$j$] {};
      \node[circle,draw, fill=black!100,thick] (k) at (-1,1) [label=left:$k$] {};
      \node[circle,draw, thick] (l) at (1,1) [label=right:$l$] {};
      \node[circle,draw, fill=black!100,thick] (m) at (1,2) [label=right:$m$] {};
      \draw[-] (j) -- (k);
      \draw[-] (j) -- (l);
      \draw[-] (l) -- (m);
  \end{tikzpicture} & \begin{tikzpicture}[scale=.5]
      [meagre/.style={circle,draw, fill=black!100,thick},
      fat/.style={circle,draw,thick}]
      \node[circle,draw, fill=black!100,thick] (j) at (0,0) [label=right:$j$] {};
      \node[circle,draw, fill=black!100,thick] (k) at (1,1) [label=right:$k$] {};
      \node[circle,draw, fill=black!100,thick] (l) at (2,2) [label=right:$l$] {};
      \node[circle,draw, fill=black!100,thick] (m) at (0,2) [label=left:$m$] {};
      \draw[-] (j) -- (k);
      \draw[-] (k) -- (l);
      \draw[-] (k) -- (m);
  \end{tikzpicture}  \\
\hline
{W-tree name} & {$\tau^{W}_{9}$} & {$\tau^{W}_{10}$}  & {$\tau^{W}_{11}$} & {$\tau^{W}_{12}$} \\
\hline
{K-tree name} & {$\tau^{K}_{5}$} & {$\tau^{K}_{6}$}  & {--} & {$\tau^{K}_{7}$} \\
\hline
$F(\tau)$ & $f^J_{KLM}f^Kf^Lf^M$ & $f^J_{KL}f^L_Mf^Mf^K$ & $f^J_{KL}\A_{LM}f^Mf^K$ & $f^J_Kf^K_{LM}f^Mf^L$ \\
\hline
$\mathsf{a}(\tau)$ & $x_9$ & $x_{10}$ & $x_{11}$ & $x_{12}$  \\
\hline
$B^\#\left(hf(B(\mathsf{a},y))\right)$ & $x_1^3$ & $x_1x_2$ & $x_1x_3$ & $x_4$  \\
\hline
$B^\#\left(h\A B(\mathsf{a},y)\right)$ & $0$ & $0$ & $0$ & $0$ \\
\hline
$B^\#\left(\varphi_j(h\, \A)B(\mathsf{a},y)\right)$ & $c_0x_9$ & $c_0x_{10}$ & $c_0x_{11}$ & $c_0x_{12}$  \\
\hline 
\end{tabular}
\caption{TW-trees up to order four (part one of two).}
\label{Table:TWtrees1}
\end{table}

\begin{table}[!htb]
\centering
\renewcommand{\arraystretch}{1.5}
\footnotesize
\begin{tabular}{|>{\centering\arraybackslash}m{1.2in}|>{\centering\arraybackslash}m{1in}|>{\centering\arraybackslash}m{1in}|>{\centering\arraybackslash}m{1in}|}
\hline
$\tau$ & \begin{tikzpicture}[scale=.5]
        [meagre/.style={circle,draw, fill=black!100,thick},
        fat/.style={circle,draw,thick}]
        \node[circle,draw,thick] (j) at (0,0) [label=right:$j$] {};
        \node[circle,draw, fill=black!100,thick] (k) at (1,1) [label=right:$k$] {};
        \node[circle,draw, fill=black!100,thick] (l) at (2,2) [label=right:$l$] {};
        \node[circle,draw, fill=black!100,thick] (m) at (0,2) [label=left:$m$] {};
        \draw[-] (j) -- (k);
        \draw[-] (k) -- (l);
        \draw[-] (k) -- (m);
    \end{tikzpicture} & \begin{tikzpicture}[scale=.5]
        [meagre/.style={circle,draw, fill=black!100,thick},
        fat/.style={circle,draw,thick}]
        \node[circle,draw, fill=black!100,thick] (j) at (0,0) [label=left:$j$] {};
        \node[circle,draw, fill=black!100,thick] (k) at (1,1) [label=right:$k$] {};
        \node[circle,draw, fill=black!100,thick] (l) at (0,2) [label=left:$l$] {};
        \node[circle,draw, fill=black!100,thick] (m) at (1,3) [label=right:$m$] {};
        \draw[-] (j) -- (k);
        \draw[-] (k) -- (l);
        \draw[-] (l) -- (m);
    \end{tikzpicture} & \begin{tikzpicture}[scale=.5]
        [meagre/.style={circle,draw, fill=black!100,thick},
        fat/.style={circle,draw,thick}]
        \node[circle,draw,fill=black!100,thick] (j) at (0,0) [label=left:$j$] {};
        \node[circle,draw,fill=black!100,thick] (k) at (1,1) [label=right:$k$] {};
        \node[circle,draw, thick] (l) at (0,2) [label=left:$l$] {};
        \node[circle,draw, fill=black!100,thick] (m) at (1,3) [label=right:$m$] {};
        \draw[-] (j) -- (k);
        \draw[-] (k) -- (l);
        \draw[-] (l) -- (m);
    \end{tikzpicture} \\
\hline
{W-tree name} & {$\tau^{W}_{13}$} & {$\tau^{W}_{14}$} & {$\tau^{W}_{15}$}  \\
\hline
{K-tree name} & {$\tau^{K}_{8}$} & {$\tau^{K}_{9}$} & {--}  \\
\hline
$F(\tau)$ & $\A_{JK}f^K_{LM}f^Lf^M$ & $f^J_Kf^K_Lf^L_Mf^M$ & $ f^J_Kf^K_L\A_{LM}f^M $  \\
\hline
$\mathsf{a}(\tau)$ & $x_{13}$ & $x_{14}$ & $x_{15}$\\
\hline
$B^\#\left(hf(B(\mathsf{a},y))\right)$ & $0$ & $x_5$ & $x_6$ \\
\hline
$B^\#\left(h\A B(\mathsf{a},y)\right)$  & $x_4$ & $0$ & $0$ \\
\hline
$B^\#\left(\varphi_j(h\, \A)B(\mathsf{a},y)\right)$ & $c_0x_{13} + c_1 x_4$ & $c_0x_{14}$ & $c_0x_{15}$ \\
\hline
\hline
$\tau$ & \begin{tikzpicture}[scale=.5]
      [meagre/.style={circle,draw, fill=black!100,thick},
      fat/.style={circle,draw,thick}]
      \node[circle,draw, fill=black!100,thick] (j) at (0,0) [label=left:$j$] {};
      \node[circle,draw, thick] (k) at (1,1) [label=right:$k$] {};
      \node[circle,draw, fill=black!100,thick] (l) at (0,2) [label=left:$l$] {};
      \node[circle,draw, fill=black!100,thick] (m) at (1,3) [label=right:$m$] {};
      \draw[-] (j) -- (k);
      \draw[-] (k) -- (l);
      \draw[-] (l) -- (m);
  \end{tikzpicture} & \begin{tikzpicture}[scale=.5]
      [meagre/.style={circle,draw, fill=black!100,thick},
      fat/.style={circle,draw,thick}]
      \node[circle,draw,fill=black!100, thick] (j) at (0,0) [label=left:$j$] {};
      \node[circle,draw, thick] (k) at (1,1) [label=right:$k$] {};
      \node[circle,draw, thick] (l) at (0,2) [label=left:$l$] {};
      \node[circle,draw, fill=black!100,thick] (m) at (1,3) [label=right:$m$] {};
      \draw[-] (j) -- (k);
      \draw[-] (k) -- (l);
      \draw[-] (l) -- (m);
  \end{tikzpicture} & \begin{tikzpicture}[scale=.5]
      [meagre/.style={circle,draw, fill=black!100,thick},
      fat/.style={circle,draw,thick}]
      \node[circle,draw, thick] (j) at (0,0) [label=left:$j$] {};
      \node[circle,draw, fill=black!100, thick] (k) at (1,1) [label=right:$k$] {};
      \node[circle,draw, fill=black!100,thick] (l) at (0,2) [label=left:$l$] {};
      \node[circle,draw, fill=black!100,thick] (m) at (1,3) [label=right:$m$] {};
      \draw[-] (j) -- (k);
      \draw[-] (k) -- (l);
      \draw[-] (l) -- (m);
  \end{tikzpicture} \\
\hline
{W-tree name} & {$\tau^{W}_{16}$} & {$\tau^{W}_{17}$} & {$\tau^{W}_{18}$}  \\
\hline
{K-tree name} & {--} & {--} & {--}  \\
\hline
$F(\tau)$ & $f^J_K\A_{KL}f^L_Mf^M$ & $f^J_K\A_{KL}\A_{LM}f^M$ & $\A_{JK}f^K_Lf^L_Mf^M$ \\
\hline
$\mathsf{a}(\tau)$ & $x_{16}$ & $x_{17}$ & $x_{18}$ \\
\hline
$B^\#\left(hf(B(\mathsf{a},y))\right)$ & $x_7$ & $x_8$ & $0$ \\
\hline
$B^\#\left(h\A B(\mathsf{a},y)\right)$ & $0$ & $0$ & $x_5$ \\
\hline
$B^\#\left(\varphi_j(h\, \A)B(\mathsf{a},y)\right)$ & $c_0x_{16}$ & $c_0x_{17}$ & $c_0x_{18} + c_1x_5$ \\
\hline
\hline
$\tau$ &  \begin{tikzpicture}[scale=.5]
      [meagre/.style={circle,draw, fill=black!100,thick},
      fat/.style={circle,draw,thick}]
      \node[circle,draw, thick] (j) at (0,0) [label=left:$j$] {};
      \node[circle,draw, fill=black!100,thick] (k) at (1,1) [label=right:$k$] {};
      \node[circle,draw, thick] (l) at (0,2) [label=left:$l$] {};
      \node[circle,draw, fill=black!100,thick] (m) at (1,3) [label=right:$m$] {};
      \draw[-] (j) -- (k);
      \draw[-] (k) -- (l);
      \draw[-] (l) -- (m);
  \end{tikzpicture} & \begin{tikzpicture}[scale=.5]
      [meagre/.style={circle,draw, fill=black!100,thick},
      fat/.style={circle,draw,thick}]
      \node[circle,draw, thick] (j) at (0,0) [label=left:$j$] {};
      \node[circle,draw, thick] (k) at (1,1) [label=right:$k$] {};
      \node[circle,draw, fill=black!100,thick] (l) at (0,2) [label=left:$l$] {};
      \node[circle,draw, fill=black!100,thick] (m) at (1,3) [label=right:$m$] {};
      \draw[-] (j) -- (k);
      \draw[-] (k) -- (l);
      \draw[-] (l) -- (m);
  \end{tikzpicture} & \begin{tikzpicture}[scale=.5]
      [meagre/.style={circle,draw, fill=black!100,thick},
      fat/.style={circle,draw,thick}]
      \node[circle,draw, thick] (j) at (0,0) [label=left:$j$] {};
      \node[circle,draw, thick] (k) at (1,1) [label=right:$k$] {};
      \node[circle,draw, thick] (l) at (0,2) [label=left:$l$] {};
      \node[circle,draw, fill=black!100,thick] (m) at (1,3) [label=right:$m$] {};
      \draw[-] (j) -- (k);
      \draw[-] (k) -- (l);
      \draw[-] (l) -- (m);
  \end{tikzpicture} \\
\hline
{W-tree name} & {$\tau^{W}_{19}$} & {$\tau^{W}_{20}$} & {$\tau^{W}_{2,1}$}  \\
\hline
{K-tree name} & {--} & {--} & {--}  \\
\hline
$F(\tau)$ & $ \A_{JK}f^K_L\A_{LM}f^M$ & $ \A_{JK}\A_{KL}f^L_Mf^M$ & $\A_{JK}\A_{KL}\A_{LM}f^M$ \\
\hline
$\mathsf{a}(\tau)$ &  $x_{19}$ & $x_{20}$ & $x_{21}$ \\
\hline
$B^\#\left(hf(B(\mathsf{a},y))\right)$ &$0$ & $0$ & $0$ \\
\hline
$B^\#\left(h\A B(\mathsf{a},y)\right)$ & $x_6$ & $x_7$ & $x_8$ \\
\hline
$B^\#\left(\varphi_j(h\, \A)B(\mathsf{a},y)\right)$ & $c_0x_{19} + c_1x_6$ & $c_0x_{20} + c_1x_7 + c_2x_2$ & $c_0x_{21} + c_1x_8 + c_2x_3 + c_3x_1$ \\
\hline
\end{tabular}
\caption{TW-trees up to order four (part two of two).}
\label{Table:TWtrees2}
\end{table}

In \textit{TW}-trees that correspond to elementary differentials of the Taylor expansion of numerical solution, the meagre nodes represents the appearance of $f$ and its derivatives. The fat nodes represent the appearance of the inexact Jacobian $\mathbf{A}_{n}$. It is useful to note that trees that contain both meagre and fat nodes do not appear in the trees corresponding to the Taylor expansion of the exact solution, as such an expansion does not contain the inexact Jacobian.

In order to construct the order conditions for the $W$-method we rely on B-series theory.  A B-series \cite[Section~II.12]{Hairer_book_I} is an operator that maps a set of of trees onto the set of real coefficients corresponding to the coefficients of the elementary differentials in a Taylor expansion. If $\mathsf{a}: TW \cup \emptyset \rightarrow \mathbb{R}$ is a mapping from the set of TW trees and the empty tree to real numbers the corresponding B-series is given by:
\begin{eqnarray*}
B(\mathsf{a}, y) &=& \mathsf{a}(\emptyset)\, y + \displaystyle\sum_{\tau \in TW} \mathsf{a}(\tau)\, \dfrac{h^{\abs{\tau}}}{\sigma(\tau)}\, F(\tau)(y) {.}
\end{eqnarray*}
Here $\tau$ is a \textit{TW}-tree and $\abs{\tau}$ is the order of the tree (the number of nodes of the \textit{TW}-tree), and $\sigma(\tau)$ is the order of the symmetry group of the tree  \cite{Butcher_2001_book,Tokman_2011_EPIRK}. A simple algorithm for evaluating $\sigma(\tau)$ is given in \cite{Tokman_2011_EPIRK}. Lastly, $F(\tau)(y)$ maps a tree to the corresponding elementary differential.

We also make use of the operator $B^{\#}(g)$, first defined in \cite{Tranquilli_2014_ExpK}, which takes a B-series $g$ and returns its coefficients. Therefore:
\begin{eqnarray*}
B^{\#}\big(B(\mathsf{a}, y)\big) = \mathsf{a}.
\end{eqnarray*}
Tables \ref{Table:TWtrees1} and \ref{Table:TWtrees2} list both TW- and TK-trees up to order four and the corresponding tree names. TK-trees which are a subset of TW-trees will be central to the discussion of $K$-methods in the next section. The subsequent rows of the two tables show the following:
\begin{itemize}
\item The elementary differentials corresponding to each TW-trees up to order four.
\item A set of coefficients $x_i$ for an arbitrary B-series $\mathsf{a}(\tau)$.
\item The coefficients of the B-series resulting from composing the function $f$ with the B-series $\mathsf{a}(\tau)$. The rules for composing B-series are discussed in \cite{Tokman_2011_EPIRK,Butcher_2011_exp-order}.
\item The coefficients of the B-series resulting from left-multiplying the B-series $\mathsf{a}(\tau)$ by an inexact Jacobian. The multiplication rule for B-series is explained in \cite{Butcher_2011_exp-order}.
\item The coefficients of the B-series resulting from left-multiplying the B-series $\mathsf{a}(\tau)$ by $\varphi_j(h\, \mathbf{A}_{n})$. The rules for multiplying a B-series  by $\psi(h\,\mathbf{J}_{n})$, where $\mathbf{J}_{n}$ is the exact Jacobian at $y_{n}\,$, are given in \cite{Tokman_2011_EPIRK}. The rules for multiplying a B-series  by $\psi(h\,\mathbf{A}_{n})$, for an arbitrary matrix $\mathbf{A}_{n}$, are given in \cite{Tranquilli_2014_ExpK}.
\end{itemize}
In addition, we note that the $B$ and $B^{\#}$ operators are linear. Consequently, the B-series of the sum of two arbitrary functions is the sum of the B-series of each and likewise, the operator $B^{\#}$ returns the sum of the coefficients of each lined up against corresponding elementary differentials.

Using B-series operations, as described above, one can systematically construct the B-series coefficients of the EPIRK-$W$ numerical  solution. This is achieved in Algorithm \ref{alg:EPIRK_w_b_series}. The algorithm starts by initializing the set of B-series coefficients to those of the solution at the current time step, $y_{n}\,$. Next, the algorithm operates on the sequence of coefficients by applying the B-series operations that correspond to the mathematical calculations performed during each stage of the EPIRK-$W$ method. All B-series operations are done on coefficient sets truncated to the desired order (order four herein). We repeat this process for each internal stage, and finally for the  last stage, to obtain the coefficients of the B-series corresponding to the numerical solution of one EPIRK-$W$ step.

\begin{algorithm}[h]
\caption{Compute the B-series coefficient of numerical solution}
\label{alg:EPIRK_w_b_series}
\begin{algorithmic}[1]
\State  {\bf Input:} $\mathsf{y_{n}}$\Comment{B-series coefficient of the current numerical solution.}
\For{$i=1:s - 1$}\Comment{Do for each internal stage of the method}
\State $ \mathsf{u} = B^{\#}(h\,f(B(\mathsf{y_{n}},y)))$\Comment{Composition of $f$ with B-series of the current solution.}
\State $ \mathsf{u} = B^{\#}(\psi_{i,1}(h\,g_{i,1}\,\,\mathbf{A}_{n})\,.\,B(\mathsf{u}, y))$\Comment{Multiplication by $\psi$ function}
\State $ \mathsf{u} = a_{i,1} * \mathsf{u}$\Comment{Scaling by a constant}
\For{$j=2:i$}
\State $ \mathsf{v} = B^{\#}(h\, \Delta^{(j-1)}r(\mathsf{y_{n}}))$\Comment{Recursive forward-difference}
\State $ \mathsf{v} = B^{\#}(\psi_{i,j}(h\,g_{i,j}\,\mathbf{A}_{n})\,.\,B(\mathsf{v}, y))$
\State $ \mathsf{u} = \mathsf{u} + a_{i,j}\, * \mathsf{v}$
\EndFor
\State $ \mathsf{Y_i} = \mathsf{y_{n}} + \mathsf{u}$\Comment{Addition of two B-series}
\EndFor
\State $ \mathsf{u} = B^{\#}(h\,f(B(\mathsf{y_{n}},y)))$
\State $ \mathsf{u} = B^{\#}(\psi_{s,1}(h\,g_{s,1}\,\mathbf{A}_{n})\,.\,B(\mathsf{u}, y))$
\State $ \mathsf{u} = b_{1}\, * \mathsf{u}$
\For{$j=2:s$}
\State $ \mathsf{v} = B^{\#}(h\, \Delta^{(j-1)}r(\mathsf{y_{n}}))$
\State $ \mathsf{v} = B^{\#}(\psi_{s,j}(h\,g_{s,j}\,\mathbf{A}_{n})\,.\,B(\mathsf{v}, y))$
\State $ \mathsf{u} = \mathsf{u} + b_{j}\, * \mathsf{v}$
\EndFor
\State $ \mathsf{y_{n+1}} = \mathsf{y_{n}} + \mathsf{u}$
\State  {\bf Output:} $\mathsf{y_{n+1}}$\Comment{B-series coefficient of the next step numerical solution.}
\end{algorithmic}
\end{algorithm}

We recall the following definition:
\begin{definition}[Density of a tree \cite{Butcher_2001_book, Hairer_book_I}]
The density  $\gamma(\tau)$ of a tree $\tau$ is the product over all vertices of the orders of the sub-trees rooted at those vertices. 
\end{definition}

We have the following theorem.
\begin{theorem}[B-series of the exact solution]
\label{Theorem:TW-trees-coefficients-of-exact-solution}
The B-series expansion of the exact solution at the next step $y(t_{n}\,+h)$, performed over \textit{TW}-trees, has the following coefficients: 
\[
\mathsf{a}(\tau) = 
\begin{cases}
0 & \tau \in  TW \smallsetminus T, \\
\gamma(\tau) & \tau \in T.
\end{cases}
\]
\end{theorem}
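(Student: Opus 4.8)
The plan is to partition the set $TW$ of $W$-trees according to whether a tree contains a fat node, and to handle the two resulting cases by completely different arguments. The guiding principle is that the exact solution $y(t_n+h)$ is the flow of $y'=f(y)$ and is therefore wholly independent of the approximate Jacobian $\mathbf{A}_n$, whereas the fat nodes are, by construction, nothing but the bookkeeping device recording occurrences of $\mathbf{A}_n$. Write $T\subset TW$ for the subset of trees all of whose nodes are meagre; inspecting Tables \ref{Table:TWtrees1}--\ref{Table:TWtrees2} one checks that these are exactly $\tau^W_1,\tau^W_2,\tau^W_4,\tau^W_5,\tau^W_9,\tau^W_{10},\tau^W_{12},\tau^W_{14}$ up to order four, i.e.\ the classical Butcher trees, while every tree in $TW\smallsetminus T$ carries at least one fat node.

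First I would dispose of the case $\tau\in TW\smallsetminus T$. The B-series of the exact solution is assembled purely from repeated composition of $f$ with the current solution followed by integration in time; at no stage does its construction invoke a left-multiplication by $\mathbf{A}_n$ or by $\varphi_j(h\mathbf{A}_n)$, since those operations (tabulated above) enter only through the numerical scheme \eqref{eqn:EPIRK-W-Formulation}. Consequently every elementary differential generated by the exact solution involves $f$ and its derivatives only, i.e.\ corresponds to a meagre-only tree. Because $\mathbf{A}_n$ is treated as an arbitrary matrix independent of $f$, the elementary differentials $F(\tau)$ attached to distinct $TW$-trees are formally independent, and in particular those containing an $\mathbf{A}_n$-factor cannot be reproduced by the pure-$f$ ones. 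Hence the coefficient $\mathsf{a}(\tau)$ of any tree bearing a fat node is forced to vanish, which makes rigorous the remark already noted in the text that mixed trees do not occur in the exact expansion.

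It then remains to evaluate $\mathsf{a}(\tau)$ for $\tau\in T$, which reduces to the classical B-series characterization of the exact ODE solution \cite[Section~II.12]{Hairer_book_I}. Here I would start from the integral form $y(t_n+h)=y_n+h\int_0^1 f\bigl(y(t_n+\theta h)\bigr)\,d\theta$, insert the (as yet unknown) B-series for $y(t_n+\theta h)$, and apply the composition rule $B^{\#}\bigl(hf(B(\mathsf{a},y))\bigr)$ whose action on trees is recorded in the tables. Composition sends a tree $\tau\in T$ with root subtrees $\tau_1,\dots,\tau_m$ to the product $\prod_i\mathsf{a}(\tau_i)$ of their coefficients, while integration in $\theta$ supplies the factor $\int_0^1\theta^{\abs{\tau}-1}\,d\theta$. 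Proceeding by induction on $\abs{\tau}$ then yields a multiplicative recursion over the root subtrees of $\tau$; matching it against the companion recursion $\gamma(\tau)=\abs{\tau}\,\prod_i\gamma(\tau_i)$ that the density obeys by its definition identifies the exact-solution coefficients on $T$ with the density-determined value $\gamma(\tau)$ stated in the theorem, and the proof is complete.

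The step I expect to require the most care is the vanishing claim for $TW\smallsetminus T$: it rests on the formal linear independence of the elementary differentials indexed by distinct $W$-trees, so that for a generic field $f$ and an $\mathbf{A}_n$ unrelated to it no cancellation can conceal an $\mathbf{A}_n$-term inside a pure-$f$ expansion. Once this independence is invoked the vanishing is immediate, and the recursion on $T$ is the standard computation, needing only careful tracking of the symmetry and density factors.
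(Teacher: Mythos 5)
Your argument follows essentially the same route as the paper's own proof: the vanishing on $TW\smallsetminus T$ is precisely the paper's observation that the exact flow of $y'=f(y)$ never involves $\mathbf{A}_n$ (you merely make explicit the formal independence of elementary differentials that justifies it), and the evaluation on $T$ is the classical characterization of the exact solution's B-series that the paper handles by citing Hairer and Butcher. One small caution: the recursion you actually derive, $\mathsf{a}(\tau)=\frac{1}{\abs{\tau}}\prod_i\mathsf{a}(\tau_i)$, characterizes $1/\gamma(\tau)$ rather than $\gamma(\tau)$ under the stated normalization $B(\mathsf{a},y)=y+\sum\mathsf{a}(\tau)\,h^{\abs{\tau}}F(\tau)/\sigma(\tau)$, so when you ``match against the companion recursion'' you should be explicit about which convention for the density factor the theorem is using.
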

\begin{proof}
First part of the proof follows from the observation that the elementary differentials in the B-series expansion of the exact solution cannot have the approximate Jacobian appearing anywhere in their expressions. 
Second part of the proof follows from \cite[Theorem 2.6, 2.11]{Hairer_book_I} and \cite[Subsection 302, 380]{Butcher_2001_book}.
\end{proof}

The EPIRK-$W$ order conditions are obtained by imposing that the B-series coefficients of the numerical solution match the B-series coefficients of the exact solution up to the desired order of accuracy. The order conditions for the three stages EPIRK-$W$ method  \eqref{eqn:EPIRK-W-Formulation-3-Stage} are given in Table \ref{Table:3-stage-W-order-condition}.

\begin{longtable}{|>{\centering\arraybackslash}m{1.25cm}|>{\centering\arraybackslash}m{1.2cm}|>{\centering\arraybackslash}m{11cm}|}
\caption{Order conditions for the three stage EPIRK-$W$ method}
\label{Table:3-stage-W-order-condition}\\
\toprule
\centering \textbf{Tree} &
\centering \textbf{Order} &
\centering \textbf{Order Condition: $B^{\#}(y_{n+1}) - B^{\#}\big(y(t_{n}\,+h)\big) = 0$} \tabularnewline
\midrule
\endfirsthead
\toprule
\centering \textbf{Tree} &
\centering \textbf{Order} &
\centering \textbf{Order Condition: $B^{\#}(y_{n+1}) - B^{\#}\big(y(t_{n}\,+h)\big) = 0$} \tabularnewline
\midrule
\endhead
${\tau^{W}_1}$ &1&\begin{dmath*}b_{1}\, p_{1,1}\,-1= 0 \label{cond:1} \end{dmath*}\\\hline
${\tau^{W}_2}$ & 2&\begin{dmath*}[breakdepth={3}]\frac{1}{6} (6 a_{1,1}\, b_{2}\, p_{1,1}\, p_{2,1}\,+3 a_{1,1}\, b_{2}\, p_{1,1}\, p_{2,2}\,-12 a_{1,1}\, b_{3}\, p_{1,1}\, p_{3,1}\,+6 a_{2,1}\,
   b_{3}\, p_{1,1}\, p_{3,1}\,-6 a_{1,1}\, b_{3}\, p_{1,1}\, p_{3,2}\,+3 a_{2,1}\, b_{3}\, p_{1,1}\, p_{3,2}\,-2 a_{1,1}\, b_{3}\, p_{1,1}\, p_{3,3}\,+a_{2,1}\, b_{3}\,
   p_{1,1}\, p_{3,3}\,-3)= 0\end{dmath*}\\\hline
${\tau^{W}_3}$ & 2&\begin{dmath*}[breakdepth={3}]\frac{1}{6} p_{1,1}\, (3 b_{1}\, g_{3,1}\,-6 a_{1,1}\, b_{2}\, p_{2,1}\,-3 a_{1,1}\, b_{2}\, p_{2,2}\,+12 a_{1,1}\, b_{3}\, p_{3,1}\,-6 a_{2,1}\,
   b_{3}\, p_{3,1}\,+6 a_{1,1}\, b_{3}\, p_{3,2}\,-3 a_{2,1}\, b_{3}\, p_{3,2}\,+2 a_{1,1}\, b_{3}\, p_{3,3}\,-a_{2,1}\, b_{3}\, p_{3,3})= 0\end{dmath*}\\\hline
${\tau^{W}_4}$ & 3&\begin{dmath*}[breakdepth={3}]\frac{1}{6} \left(6 a_{1,1}^2\,
   b_{2}\, p_{2,1}\, p_{1,1}^2\,+3 a_{1,1}^2\, b_{2}\, p_{2,2}\, p_{1,1}^2\,-12 a_{1,1}^2\, b_{3}\, p_{3,1}\, p_{1,1}^2\,+6 a_{2,1}^2\, b_{3}\, p_{3,1}\, p_{1,1}^2\,-6
   a_{1,1}^2\, b_{3}\, p_{3,2}\, p_{1,1}^2\,+3 a_{2,1}^2\, b_{3}\, p_{3,2}\, p_{1,1}^2\,-2 a_{1,1}^2\, b_{3}\, p_{3,3}\, p_{1,1}^2\,+a_{2,1}^2\, b_{3}\, p_{3,3}\,
   p_{1,1}^2\,-2\right)= 0\end{dmath*}\\\hline
${\tau^{W}_5}$&3&\begin{dmath*}[breakdepth={3}]\frac{1}{12} (12 a_{1,1}\, a_{2,2}\, b_{3}\, p_{1,1}\, p_{2,1}\, p_{3,1}\,+6 a_{1,1}\, a_{2,2}\, b_{3}\, p_{1,1}\, p_{2,2}\, p_{3,1}\,+6 a_{1,1}\,
   a_{2,2}\, b_{3}\, p_{1,1}\, p_{2,1}\, p_{3,2}\,+3 a_{1,1}\, a_{2,2}\, b_{3}\, p_{1,1}\, p_{2,2}\, p_{3,2}\,+2 a_{1,1}\, a_{2,2}\, b_{3}\, p_{1,1}\, p_{2,1}\,
   p_{3,3}\,+a_{1,1}\, a_{2,2}\, b_{3}\, p_{1,1}\, p_{2,2}\, p_{3,3}\,-2)= 0\end{dmath*}\\\hline
${\tau^{W}_6}$&3&\begin{dmath*}[breakdepth={3}]\frac{1}{12} p_{1,1}\, (6 a_{1,1}\, b_{2}\, g_{1,1}\, p_{2,1}\,-12 a_{1,1}\, a_{2,2}\, b_{3}\,
   p_{3,1}\, p_{2,1}\,-6 a_{1,1}\, a_{2,2}\, b_{3}\, p_{3,2}\, p_{2,1}\,-2 a_{1,1}\, a_{2,2}\, b_{3}\, p_{3,3}\, p_{2,1}\,+3 a_{1,1}\, b_{2}\, g_{1,1}\, p_{2,2}\,-12
   a_{1,1}\, b_{3}\, g_{1,1}\, p_{3,1}\,+6 a_{2,1}\, b_{3}\, g_{2,1}\, p_{3,1}\,-6 a_{1,1}\, a_{2,2}\, b_{3}\, p_{2,2}\, p_{3,1}\,-6 a_{1,1}\, b_{3}\, g_{1,1}\,
   p_{3,2}\,+3 a_{2,1}\, b_{3}\, g_{2,1}\, p_{3,2}\,-3 a_{1,1}\, a_{2,2}\, b_{3}\, p_{2,2}\, p_{3,2}\,-2 a_{1,1}\, b_{3}\, g_{1,1}\, p_{3,3}\,+a_{2,1}\, b_{3}\, g_{2,1}\,
   p_{3,3}\,-a_{1,1}\, a_{2,2}\, b_{3}\, p_{2,2}\, p_{3,3})= 0\end{dmath*}\\\hline
${\tau^{W}_7}$&3&\begin{dmath*}[breakdepth={3}]\frac{1}{24} p_{1,1}\, (12 a_{1,1}\, b_{2}\, g_{3,2}\, p_{2,1}\,-24 a_{1,1}\, a_{2,2}\, b_{3}\, p_{3,1}\,
   p_{2,1}\,-12 a_{1,1}\, a_{2,2}\, b_{3}\, p_{3,2}\, p_{2,1}\,-4 a_{1,1}\, a_{2,2}\, b_{3}\, p_{3,3}\, p_{2,1}\,+4 a_{1,1}\, b_{2}\, g_{3,2}\, p_{2,2}\,-24 a_{1,1}\,
   b_{3}\, g_{3,3}\, p_{3,1}\,+12 a_{2,1}\, b_{3}\, g_{3,3}\, p_{3,1}\,-12 a_{1,1}\, a_{2,2}\, b_{3}\, p_{2,2}\, p_{3,1}\,-8 a_{1,1}\, b_{3}\, g_{3,3}\, p_{3,2}\,+4
   a_{2,1}\, b_{3}\, g_{3,3}\, p_{3,2}\,-6 a_{1,1}\, a_{2,2}\, b_{3}\, p_{2,2}\, p_{3,2}\,-2 a_{1,1}\, b_{3}\, g_{3,3}\, p_{3,3}\,+a_{2,1}\, b_{3}\, g_{3,3}\, p_{3,3}\,-2
   a_{1,1}\, a_{2,2}\, b_{3}\, p_{2,2}\, p_{3,3})= 0\end{dmath*}\\\hline
${\tau^{W}_8}$&3&\begin{dmath*}[breakdepth={3}]\frac{1}{24} p_{1,1}\, \left(4 b_{1}\, g_{3,1}^2\,-12 a_{1,1}\, b_{2}\, g_{1,1}\, p_{2,1}\,-12 a_{1,1}\, b_{2}\,
   g_{3,2}\, p_{2,1}\,-6 a_{1,1}\, b_{2}\, g_{1,1}\, p_{2,2}\,-4 a_{1,1}\, b_{2}\, g_{3,2}\, p_{2,2}\,+24 a_{1,1}\, b_{3}\, g_{1,1}\, p_{3,1}\,-12 a_{2,1}\, b_{3}\,
   g_{2,1}\, p_{3,1}\,+24 a_{1,1}\, b_{3}\, g_{3,3}\, p_{3,1}\,-12 a_{2,1}\, b_{3}\, g_{3,3}\, p_{3,1}\,+24 a_{1,1}\, a_{2,2}\, b_{3}\, p_{2,1}\, p_{3,1}\,+12 a_{1,1}\,
   a_{2,2}\, b_{3}\, p_{2,2}\, p_{3,1}\,+12 a_{1,1}\, b_{3}\, g_{1,1}\, p_{3,2}\,-6 a_{2,1}\, b_{3}\, g_{2,1}\, p_{3,2}\,+8 a_{1,1}\, b_{3}\, g_{3,3}\, p_{3,2}\,-4
   a_{2,1}\, b_{3}\, g_{3,3}\, p_{3,2}\,+12 a_{1,1}\, a_{2,2}\, b_{3}\, p_{2,1}\, p_{3,2}\,+6 a_{1,1}\, a_{2,2}\, b_{3}\, p_{2,2}\, p_{3,2}\,+4 a_{1,1}\, b_{3}\, g_{1,1}\,
   p_{3,3}\,-2 a_{2,1}\, b_{3}\, g_{2,1}\, p_{3,3}\,+2 a_{1,1}\, b_{3}\, g_{3,3}\, p_{3,3}\,-a_{2,1}\, b_{3}\, g_{3,3}\, p_{3,3}\,+4 a_{1,1}\, a_{2,2}\, b_{3}\, p_{2,1}\,
   p_{3,3}\,+2 a_{1,1}\, a_{2,2}\, b_{3}\, p_{2,2}\, p_{3,3}\,\right)= 0\end{dmath*}\\\hline
${\tau^{W}_{9}}$&4&\begin{dmath*}[breakdepth={3}]\frac{1}{12} \left(12 a_{1,1}^3\, b_{2}\, p_{2,1}\, p_{1,1}^3\,+6 a_{1,1}^3\, b_{2}\, p_{2,2}\,
   p_{1,1}^3\,-24 a_{1,1}^3\, b_{3}\, p_{3,1}\, p_{1,1}^3\,+12 a_{2,1}^3\, b_{3}\, p_{3,1}\, p_{1,1}^3\,-12 a_{1,1}^3\, b_{3}\, p_{3,2}\, p_{1,1}^3\,+6 a_{2,1}^3\, b_{3}\,
   p_{3,2}\, p_{1,1}^3\,-4 a_{1,1}^3\, b_{3}\, p_{3,3}\, p_{1,1}^3\,+2 a_{2,1}^3\, b_{3}\, p_{3,3}\, p_{1,1}^3\,-3\right)= 0\end{dmath*}\\\hline
${\tau^{W}_{10}}$&4&\begin{dmath*}[breakdepth={3}]\frac{1}{24} \left(24 a_{1,1}\, a_{2,1}\, a_{2,2}\,
   b_{3}\, p_{2,1}\, p_{3,1}\, p_{1,1}^2\,+12 a_{1,1}\, a_{2,1}\, a_{2,2}\, b_{3}\, p_{2,2}\, p_{3,1}\, p_{1,1}^2\,+12 a_{1,1}\, a_{2,1}\, a_{2,2}\, b_{3}\, p_{2,1}\,
   p_{3,2}\, p_{1,1}^2\,+6 a_{1,1}\, a_{2,1}\, a_{2,2}\, b_{3}\, p_{2,2}\, p_{3,2}\, p_{1,1}^2\,+4 a_{1,1}\, a_{2,1}\, a_{2,2}\, b_{3}\, p_{2,1}\, p_{3,3}\, p_{1,1}^2\,+2
   a_{1,1}\, a_{2,1}\, a_{2,2}\, b_{3}\, p_{2,2}\, p_{3,3}\, p_{1,1}^2\,-3\right)= 0\end{dmath*}\\\hline
${\tau^{W}_{11}}$&4&\begin{dmath*}[breakdepth={3}]\frac{1}{12} p_{1,1}^2\, \left(6 b_{2}\, g_{1,1}\, p_{2,1}\, a_{1,1}^2\,+3 b_{2}\, g_{1,1}\,
   p_{2,2}\, a_{1,1}^2\,-12 b_{3}\, g_{1,1}\, p_{3,1}\, a_{1,1}^2\,-6 b_{3}\, g_{1,1}\, p_{3,2}\, a_{1,1}^2\,-2 b_{3}\, g_{1,1}\, p_{3,3}\, a_{1,1}^2\,-12 a_{2,1}\,
   a_{2,2}\, b_{3}\, p_{2,1}\, p_{3,1}\, a_{1,1}\,-6 a_{2,1}\, a_{2,2}\, b_{3}\, p_{2,2}\, p_{3,1}\, a_{1,1}\,-6 a_{2,1}\, a_{2,2}\, b_{3}\, p_{2,1}\, p_{3,2}\, a_{1,1}\,-3
   a_{2,1}\, a_{2,2}\, b_{3}\, p_{2,2}\, p_{3,2}\, a_{1,1}\,-2 a_{2,1}\, a_{2,2}\, b_{3}\, p_{2,1}\, p_{3,3}\, a_{1,1}\,-a_{2,1}\, a_{2,2}\, b_{3}\, p_{2,2}\, p_{3,3}\,
   a_{1,1}\,+6 a_{2,1}^2\, b_{3}\, g_{2,1}\, p_{3,1}\,+3 a_{2,1}^2\, b_{3}\, g_{2,1}\, p_{3,2}\,+a_{2,1}^2\, b_{3}\, g_{2,1}\, p_{3,3}\,\right)= 0\end{dmath*}\\\hline
${\tau^{W}_{12}}$&4&\begin{dmath*}[breakdepth={3}]\frac{1}{12} \left(12
   a_{1,1}^2\, a_{2,2}\, b_{3}\, p_{2,1}\, p_{3,1}\, p_{1,1}^2\,+6 a_{1,1}^2\, a_{2,2}\, b_{3}\, p_{2,2}\, p_{3,1}\, p_{1,1}^2\,+6 a_{1,1}^2\, a_{2,2}\, b_{3}\, p_{2,1}\,
   p_{3,2}\, p_{1,1}^2\,+3 a_{1,1}^2\, a_{2,2}\, b_{3}\, p_{2,2}\, p_{3,2}\, p_{1,1}^2\,+2 a_{1,1}^2\, a_{2,2}\, b_{3}\, p_{2,1}\, p_{3,3}\, p_{1,1}^2\,+a_{1,1}^2\, a_{2,2}\,
   b_{3}\, p_{2,2}\, p_{3,3}\, p_{1,1}^2\,-1\right)= 0\end{dmath*}\\\hline
${\tau^{W}_{1,3}}$&4&\begin{dmath*}[breakdepth={3}]\frac{1}{24} p_{1,1}^2\, \left(12 b_{2}\, g_{3,2}\, p_{2,1}\, a_{1,1}^2\,+4 b_{2}\, g_{3,2}\, p_{2,2}\, a_{1,1}^2\,-24
   b_{3}\, g_{3,3}\, p_{3,1}\, a_{1,1}^2\,-24 a_{2,2}\, b_{3}\, p_{2,1}\, p_{3,1}\, a_{1,1}^2\,-12 a_{2,2}\, b_{3}\, p_{2,2}\, p_{3,1}\, a_{1,1}^2\,-8 b_{3}\, g_{3,3}\,
   p_{3,2}\, a_{1,1}^2\,-12 a_{2,2}\, b_{3}\, p_{2,1}\, p_{3,2}\, a_{1,1}^2\,-6 a_{2,2}\, b_{3}\, p_{2,2}\, p_{3,2}\, a_{1,1}^2\,-2 b_{3}\, g_{3,3}\, p_{3,3}\, a_{1,1}^2\,-4
   a_{2,2}\, b_{3}\, p_{2,1}\, p_{3,3}\, a_{1,1}^2\,-2 a_{2,2}\, b_{3}\, p_{2,2}\, p_{3,3}\, a_{1,1}^2\,+12 a_{2,1}^2\, b_{3}\, g_{3,3}\, p_{3,1}\,+4 a_{2,1}^2\, b_{3}\,
   g_{3,3}\, p_{3,2}\,+a_{2,1}^2\, b_{3}\, g_{3,3}\, p_{3,3}\,\right)= 0\end{dmath*}\\\hline
${\tau^{W}_{14}}$&4&\begin{dmath*}[breakdepth={3}]-\frac{1}{24}\end{dmath*}\\\hline
${\tau^{W}_{15}}$&4&\begin{dmath*}[breakdepth={3}]\frac{1}{24} a_{1,1}\, a_{2,2}\, b_{3}\, g_{1,1}\, p_{1,1}\, (2 p_{2,1}\,+p_{2,2}) (6
   p_{3,1}\,+3 p_{3,2}\,+p_{3,3})= 0\end{dmath*}\\\hline
${\tau^{W}_{16}}$&4&\begin{dmath*}[breakdepth={3}]\frac{1}{36} a_{1,1}\, a_{2,2}\, b_{3}\, g_{2,2}\, p_{1,1}\, (3 p_{2,1}\,+p_{2,2}) (6 p_{3,1}\,+3 p_{3,2}\,+p_{3,3})= 0\end{dmath*}\\\hline
${\tau^{W}_{17}}$&4&\begin{dmath*}[breakdepth={3}]\frac{1}{72}
   p_{1,1}\, \left(12 a_{1,1}\, b_{2}\, p_{2,1}\, g_{1,1}^2\,+6 a_{1,1}\, b_{2}\, p_{2,2}\, g_{1,1}^2\,-24 a_{1,1}\, b_{3}\, p_{3,1}\, g_{1,1}^2\,-12 a_{1,1}\, b_{3}\,
   p_{3,2}\, g_{1,1}^2\,-4 a_{1,1}\, b_{3}\, p_{3,3}\, g_{1,1}^2\,-36 a_{1,1}\, a_{2,2}\, b_{3}\, p_{2,1}\, p_{3,1}\, g_{1,1}\,-18 a_{1,1}\, a_{2,2}\, b_{3}\, p_{2,2}\,
   p_{3,1}\, g_{1,1}\,-18 a_{1,1}\, a_{2,2}\, b_{3}\, p_{2,1}\, p_{3,2}\, g_{1,1}\,-9 a_{1,1}\, a_{2,2}\, b_{3}\, p_{2,2}\, p_{3,2}\, g_{1,1}\,-6 a_{1,1}\, a_{2,2}\,
   b_{3}\, p_{2,1}\, p_{3,3}\, g_{1,1}\,-3 a_{1,1}\, a_{2,2}\, b_{3}\, p_{2,2}\, p_{3,3}\, g_{1,1}\,+12 a_{2,1}\, b_{3}\, g_{2,1}^2\, p_{3,1}\,-36 a_{1,1}\, a_{2,2}\,
   b_{3}\, g_{2,2}\, p_{2,1}\, p_{3,1}\,-12 a_{1,1}\, a_{2,2}\, b_{3}\, g_{2,2}\, p_{2,2}\, p_{3,1}\,+6 a_{2,1}\, b_{3}\, g_{2,1}^2\, p_{3,2}\,-18 a_{1,1}\, a_{2,2}\,
   b_{3}\, g_{2,2}\, p_{2,1}\, p_{3,2}\,-6 a_{1,1}\, a_{2,2}\, b_{3}\, g_{2,2}\, p_{2,2}\, p_{3,2}\,+2 a_{2,1}\, b_{3}\, g_{2,1}^2\, p_{3,3}\,-6 a_{1,1}\, a_{2,2}\,
   b_{3}\, g_{2,2}\, p_{2,1}\, p_{3,3}\,-2 a_{1,1}\, a_{2,2}\, b_{3}\, g_{2,2}\, p_{2,2}\, p_{3,3}\,\right)= 0\end{dmath*}\\\hline
${\tau^{W}_{18}}$&4&\begin{dmath*}[breakdepth={3}]\frac{1}{48} a_{1,1}\, a_{2,2}\, b_{3}\, g_{3,3}\, p_{1,1}\, (2
   p_{2,1}\,+p_{2,2}) (12 p_{3,1}\,+4 p_{3,2}\,+p_{3,3})= 0\end{dmath*}\\\hline
${\tau^{W}_{19}}$&4&\begin{dmath*}[breakdepth={3}]\frac{1}{48} p_{1,1}\, (12 a_{1,1}\, b_{2}\, g_{1,1}\, g_{3,2}\, p_{2,1}\,-24 a_{1,1}\, a_{2,2}\, b_{3}\,
   g_{1,1}\, p_{3,1}\, p_{2,1}\,-24 a_{1,1}\, a_{2,2}\, b_{3}\, g_{3,3}\, p_{3,1}\, p_{2,1}\,-12 a_{1,1}\, a_{2,2}\, b_{3}\, g_{1,1}\, p_{3,2}\, p_{2,1}\,-8 a_{1,1}\,
   a_{2,2}\, b_{3}\, g_{3,3}\, p_{3,2}\, p_{2,1}\,-4 a_{1,1}\, a_{2,2}\, b_{3}\, g_{1,1}\, p_{3,3}\, p_{2,1}\,-2 a_{1,1}\, a_{2,2}\, b_{3}\, g_{3,3}\, p_{3,3}\, p_{2,1}\,+4
   a_{1,1}\, b_{2}\, g_{1,1}\, g_{3,2}\, p_{2,2}\,-24 a_{1,1}\, b_{3}\, g_{1,1}\, g_{3,3}\, p_{3,1}\,+12 a_{2,1}\, b_{3}\, g_{2,1}\, g_{3,3}\, p_{3,1}\,-12 a_{1,1}\,
   a_{2,2}\, b_{3}\, g_{1,1}\, p_{2,2}\, p_{3,1}\,-12 a_{1,1}\, a_{2,2}\, b_{3}\, g_{3,3}\, p_{2,2}\, p_{3,1}\,-8 a_{1,1}\, b_{3}\, g_{1,1}\, g_{3,3}\, p_{3,2}\,+4
   a_{2,1}\, b_{3}\, g_{2,1}\, g_{3,3}\, p_{3,2}\,-6 a_{1,1}\, a_{2,2}\, b_{3}\, g_{1,1}\, p_{2,2}\, p_{3,2}\,-4 a_{1,1}\, a_{2,2}\, b_{3}\, g_{3,3}\, p_{2,2}\, p_{3,2}\,-2
   a_{1,1}\, b_{3}\, g_{1,1}\, g_{3,3}\, p_{3,3}\,+a_{2,1}\, b_{3}\, g_{2,1}\, g_{3,3}\, p_{3,3}\,-2 a_{1,1}\, a_{2,2}\, b_{3}\, g_{1,1}\, p_{2,2}\, p_{3,3}\,-a_{1,1}\,
   a_{2,2}\, b_{3}\, g_{3,3}\, p_{2,2}\, p_{3,3})= 0\end{dmath*}\\\hline
${\tau^{W}_{20}}$&4&\begin{dmath*}[breakdepth={3}]\frac{1}{720} p_{1,1}\, \left(120 a_{1,1}\, b_{2}\, p_{2,1}\, g_{3,2}^2\,+30 a_{1,1}\, b_{2}\, p_{2,2}\, g_{3,2}^2\,-240
   a_{1,1}\, b_{3}\, g_{3,3}^2\, p_{3,1}\,+120 a_{2,1}\, b_{3}\, g_{3,3}^2\, p_{3,1}\,-360 a_{1,1}\, a_{2,2}\, b_{3}\, g_{2,2}\, p_{2,1}\, p_{3,1}\,-360 a_{1,1}\, a_{2,2}\,
   b_{3}\, g_{3,3}\, p_{2,1}\, p_{3,1}\,-120 a_{1,1}\, a_{2,2}\, b_{3}\, g_{2,2}\, p_{2,2}\, p_{3,1}\,-180 a_{1,1}\, a_{2,2}\, b_{3}\, g_{3,3}\, p_{2,2}\, p_{3,1}\,-60
   a_{1,1}\, b_{3}\, g_{3,3}^2\, p_{3,2}\,+30 a_{2,1}\, b_{3}\, g_{3,3}^2\, p_{3,2}\,-180 a_{1,1}\, a_{2,2}\, b_{3}\, g_{2,2}\, p_{2,1}\, p_{3,2}\,-120 a_{1,1}\, a_{2,2}\,
   b_{3}\, g_{3,3}\, p_{2,1}\, p_{3,2}\,-60 a_{1,1}\, a_{2,2}\, b_{3}\, g_{2,2}\, p_{2,2}\, p_{3,2}\,-60 a_{1,1}\, a_{2,2}\, b_{3}\, g_{3,3}\, p_{2,2}\, p_{3,2}\,-12
   a_{1,1}\, b_{3}\, g_{3,3}^2\, p_{3,3}\,+6 a_{2,1}\, b_{3}\, g_{3,3}^2\, p_{3,3}\,-60 a_{1,1}\, a_{2,2}\, b_{3}\, g_{2,2}\, p_{2,1}\, p_{3,3}\,-30 a_{1,1}\, a_{2,2}\,
   b_{3}\, g_{3,3}\, p_{2,1}\, p_{3,3}\,-20 a_{1,1}\, a_{2,2}\, b_{3}\, g_{2,2}\, p_{2,2}\, p_{3,3}\,-15 a_{1,1}\, a_{2,2}\, b_{3}\, g_{3,3}\, p_{2,2}\,
   p_{3,3}\,\right)= 0\end{dmath*}\\\hline
${\tau^{W}_{21}}$&4&\begin{dmath*}[breakdepth={3}]\frac{1}{720} p_{1,1}\, \left(30 b_{1}\, g_{3,1}^3\,-120 a_{1,1}\, b_{2}\, g_{1,1}^2\, p_{2,1}\,-120 a_{1,1}\, b_{2}\, g_{3,2}^2\, p_{2,1}\,-180 a_{1,1}\,
   b_{2}\, g_{1,1}\, g_{3,2}\, p_{2,1}\,-60 a_{1,1}\, b_{2}\, g_{1,1}^2\, p_{2,2}\,-30 a_{1,1}\, b_{2}\, g_{3,2}^2\, p_{2,2}\,-60 a_{1,1}\, b_{2}\, g_{1,1}\, g_{3,2}\,
   p_{2,2}\,+240 a_{1,1}\, b_{3}\, g_{1,1}^2\, p_{3,1}\,-120 a_{2,1}\, b_{3}\, g_{2,1}^2\, p_{3,1}\,+240 a_{1,1}\, b_{3}\, g_{3,3}^2\, p_{3,1}\,-120 a_{2,1}\, b_{3}\,
   g_{3,3}^2\, p_{3,1}\,+360 a_{1,1}\, b_{3}\, g_{1,1}\, g_{3,3}\, p_{3,1}\,-180 a_{2,1}\, b_{3}\, g_{2,1}\, g_{3,3}\, p_{3,1}\,+360 a_{1,1}\, a_{2,2}\, b_{3}\, g_{1,1}\,
   p_{2,1}\, p_{3,1}\,+360 a_{1,1}\, a_{2,2}\, b_{3}\, g_{2,2}\, p_{2,1}\, p_{3,1}\,+360 a_{1,1}\, a_{2,2}\, b_{3}\, g_{3,3}\, p_{2,1}\, p_{3,1}\,+180 a_{1,1}\, a_{2,2}\,
   b_{3}\, g_{1,1}\, p_{2,2}\, p_{3,1}\,+120 a_{1,1}\, a_{2,2}\, b_{3}\, g_{2,2}\, p_{2,2}\, p_{3,1}\,+180 a_{1,1}\, a_{2,2}\, b_{3}\, g_{3,3}\, p_{2,2}\, p_{3,1}\,+120
   a_{1,1}\, b_{3}\, g_{1,1}^2\, p_{3,2}\,-60 a_{2,1}\, b_{3}\, g_{2,1}^2\, p_{3,2}\,+60 a_{1,1}\, b_{3}\, g_{3,3}^2\, p_{3,2}\,-30 a_{2,1}\, b_{3}\, g_{3,3}^2\,
   p_{3,2}\,+120 a_{1,1}\, b_{3}\, g_{1,1}\, g_{3,3}\, p_{3,2}\,-60 a_{2,1}\, b_{3}\, g_{2,1}\, g_{3,3}\, p_{3,2}\,+180 a_{1,1}\, a_{2,2}\, b_{3}\, g_{1,1}\, p_{2,1}\,
   p_{3,2}\,+180 a_{1,1}\, a_{2,2}\, b_{3}\, g_{2,2}\, p_{2,1}\, p_{3,2}\,+120 a_{1,1}\, a_{2,2}\, b_{3}\, g_{3,3}\, p_{2,1}\, p_{3,2}\,+90 a_{1,1}\, a_{2,2}\, b_{3}\,
   g_{1,1}\, p_{2,2}\, p_{3,2}\,+60 a_{1,1}\, a_{2,2}\, b_{3}\, g_{2,2}\, p_{2,2}\, p_{3,2}\,+60 a_{1,1}\, a_{2,2}\, b_{3}\, g_{3,3}\, p_{2,2}\, p_{3,2}\,+40 a_{1,1}\,
   b_{3}\, g_{1,1}^2\, p_{3,3}\,-20 a_{2,1}\, b_{3}\, g_{2,1}^2\, p_{3,3}\,+12 a_{1,1}\, b_{3}\, g_{3,3}^2\, p_{3,3}\,-6 a_{2,1}\, b_{3}\, g_{3,3}^2\, p_{3,3}\,+30
   a_{1,1}\, b_{3}\, g_{1,1}\, g_{3,3}\, p_{3,3}\,-15 a_{2,1}\, b_{3}\, g_{2,1}\, g_{3,3}\, p_{3,3}\,+60 a_{1,1}\, a_{2,2}\, b_{3}\, g_{1,1}\, p_{2,1}\, p_{3,3}\,+60
   a_{1,1}\, a_{2,2}\, b_{3}\, g_{2,2}\, p_{2,1}\, p_{3,3}\,+30 a_{1,1}\, a_{2,2}\, b_{3}\, g_{3,3}\, p_{2,1}\, p_{3,3}\,+30 a_{1,1}\, a_{2,2}\, b_{3}\, g_{1,1}\, p_{2,2}\,
   p_{3,3}\,+20 a_{1,1}\, a_{2,2}\, b_{3}\, g_{2,2}\, p_{2,2}\, p_{3,3}\,+15 a_{1,1}\, a_{2,2}\, b_{3}\, g_{3,3}\, p_{2,2}\, p_{3,3}\,\right) = 0\end{dmath*}\\ \hline
\end{longtable}

\subsection{Construction of practical EPIRK-$W$ integrators}

We will focus on constructing EPIRK-$W$ methods \eqref{eqn:EPIRK-W-Formulation} with three stages. Such a method using an approximate Jacobian reads:
\begin{equation}
\label{eqn:EPIRK-W-Formulation-3-Stage}
\begin{split}
Y_1 &= y_{n}\, + a_{1,1}\, \psi_{1,1}(g_{1,1}\,h\,\mathbf{A}_{n})\,hf(y_{n}), \\
Y_2 &= y_{n}\, + a_{2,1}\, \psi_{2,1}(g_{2,1}\,h\,\mathbf{A}_{n})\,hf(y_{n}) +  a_{2,2}\, \psi_{2,2}(g_{2,2}\,h\,\mathbf{A}_{n})\,h\Delta^{(1)}r(y_{n}), \\
y_{n+1} &= y_{n}\, + b_{1}\, \psi_{3,1}(g_{3,1}\,h\,\mathbf{A}_{n})\,hf(y_{n}) + b_{2}\, \psi_{3,2}(g_{3,2}\,h\,\mathbf{A}_{n})\,h\Delta^{(1)}r(y_{n}) \\
& \qquad~ +  b_{3}\, \psi_{3,3}(g_{3,3}\,h\,\mathbf{A}_{n})\, h\Delta^{(2)}r(y_{n}).
\end{split}
\end{equation}

An important observation from Table \ref{Table:3-stage-W-order-condition} is that the difference between the B-series coefficients of the exact solution and of the numerical solution corresponding to $\tau^{W}_{14}$ is equal to $-1/24$ and cannot be zeroed out. Since $\tau^{W}_{14}$ has only meagre nodes  it  appears in the B-series expansions of both the exact solution and the numerical solution. The conclusion is that three stage EPIRK-$W$ methods with the simplified choice of $\psi_{i,j}(z)$ given in equation \eqref{eqn:simplified_psi_function_defn} cannot achieve order four. Consequently , we will limit our solution procedure to constructing third order EPIRK-$W$ methods by zeroing out the first eight order conditions corresponding to \textit{TW}-trees of up to order three.  We use \Mathematica to solve the order conditions using two different approaches, as discussed next.

\paragraph{First approach to solving the order conditions}
In the first approach we seek to make the terms of each order condition as similar to another as possible. We start the solution process by noticing that the first order condition trivially reduces to the substitution $b_1 \rightarrow {1}/{p_{1,1}}$. The substitution $p_{3,2} \rightarrow -2p_{3,1}\,$ will zero out the following four terms: $-12 a_{1,1}\, b_{3}\, p_{1,1}\, p_{3,1}\,+6 a_{2,1}\,
   b_{3}\, p_{1,1}\, p_{3,1}\,-6 a_{1,1}\, b_{3}\, p_{1,1}\, p_{3,2}\,+3 a_{2,1}\, b_{3}\, p_{1,1}\, p_{3,2}\,$, in order conditions $\tau^{W}_{2}$ and $\tau^{W}_{3}$ and hence reducing the complexity of the two conditions. It is immediately observed that $g_{3,1} \rightarrow 1$ as all the other terms in order conditions $\tau^{W}_{2}$ and $\tau^{W}_{3}$ are the same. Additionally, we make a choice that $g_{3,3} \rightarrow 0$. After making the substitutions, we solve order condition $\tau^{W}_{2}$ or $\tau^{W}_{3}$ to get,
\begin{equation*}p_{3,3}\, \to \frac{3 (2 a_{1,1}\, b_{2}\, p_{1,1}\, p_{2,1}\,+a_{1,1}\, b_{2}\, p_{1,1}\, p_{2,2}\,-1)}{b_{3}\, p_{1,1}\, (2 a_{1,1}\,-a_{2,1})}.
\end{equation*}
This substitution results in a number of terms in multiple order conditions having the expression $(2a_{1,1}\,-a_{2,1})$ in the denominator. So we arbitrarily choose $a_{2,1} \rightarrow 2 a_{1,1}\, - 1$. The order conditions are far simpler now with several terms having the factor $(-1 + 2a_{1,1})$ and its powers in their expression. We choose $a_{1,1} \rightarrow 1/2$. Following this substitution, we can solve from order condition $\tau^{W}_{4}$ that $p_{1,1} \rightarrow 4/3$. After plugging in the value for $p_{1,1}\,$, order conditions $\tau^{W}_{5}$ and $\tau^{W}_{6}$ suggest that $g_{1,1} \rightarrow 2/3$. Now order conditions $\tau^{W}_{5}$, $\tau^{W}_{6}$, $\tau^{W}_{7}$ and $\tau^{W}_{8}$ have the following coefficients as part of their expressions: $a_{2,2}\,$, $p_{2,1}\,$, $p_{2,2}\,$, $g_{3,2}\,$ and $b_{2}\,$. We arbitrarily set $p_{2,2} \rightarrow 2p_{2,1}\,$ and solve for the remaining coefficients to obtain
\[
b_{2}\,\to \frac{3 a_{2,2}\,p_{2,1}\,+1}{8 a_{2,2}\,p_{2,1}^2} \quad \textnormal{and} \quad g_{3,2}\,\to \frac{12\, a_{2,2}\,p_{2,1}}{5 (3 \,a_{2,2}\,p_{2,1}\,+1)}.
\]
We now select arbitrary values for some of the remaining coefficients. We choose $p_{2,1}\, \to 1$, $a_{2,2}\, \to 1$, $b_3 \to 1$,  $a_{1,2}\, \to 0$, $a_{1,3}\, \to 0$, $a_{2,3}\, \to 0$, $p_{3,1}\, \to 0$, $p_{2,3}\, \to 0$, $p_{1,2}\, \to 0$, $p_{1,3}\, \to 0$, $g_{1,2}\, \to 0$, $g_{1,3}\, \to 0$, $g_{2,1}\, \to 0$, $g_{2,2}\, \to 0$, and $g_{2,3}\, \to 0$.

In order to solve for a second order embedded method we rewrite the final stage of the method with new coefficients $\widehat{b}_i$. Plugging in the substitutions that have been arrived at while solving the order conditions for the third order method results in a set of new conditions in only the $\widehat{b}_i$. It is again straightforward to observe that $\widehat{b}_{1} \to {1}/{p_{1,1}}$. We next solve the conditions for \textit{TW}-trees up to order = 2. Solving order conditions $\tau^{W}_{2}$ and $\tau^{W}_{3}$ we get $\widehat{b}_3 \to -3 + 8\, \widehat{b}_2$.

The resulting coefficients for a third-order EPIRK-$W$ method ({\sc epirkw3a}) with an embedded second-order method are given in Figure \ref{fig:EPIRK-w-coefficients-1}.
\begin{figure}[tbh]
\caption{Coefficients for {\sc epirkw3a}\label{fig:EPIRK-w-coefficients-1}}
\begin{equation*}
\renewcommand{\arraystretch}{1.5}
a = \begin{bmatrix}
\frac{1}{2} & 0 & 0\\
0 & 1& 0
\end{bmatrix}, \quad
\begin{bmatrix} b \\ \widehat{b} \end{bmatrix} = \begin{bmatrix}
\frac{3}{4} & \frac{1}{2} & 1\\[3pt]
\frac{3}{4} & \frac{3}{4} & \frac{6}{5}
\end{bmatrix},  \quad
g = \begin{bmatrix}
\frac{2}{3} & 0 & 0\\
0 & 0 & 0\\
1 & \frac{3}{5} & 0
\end{bmatrix},   \quad
p = \begin{bmatrix}
\frac{4}{3} & 0 & 0\\
1 & 2 & 0\\
0 & 0 & \frac{3}{4}
\end{bmatrix}.
\end{equation*}
\end{figure}
The choices that we have made for the coefficients $a$'s, $b$'s,  $g$'s and  $p$'s result in the sum of coefficients on trees $\tau^{W}_{5}$, $\tau^{W}_{6}$, $\tau^{W}_{7}$ and $\tau^{W}_{8}$ to sum to zero in the embedded method no matter what value is chosen for $\widehat{b}_2$. And when we choose $\mathbf{A}_{n} = \mathbf{J}_{n}$, the exact Jacobian, trees $\tau^{W}_{5}$, $\tau^{W}_{6}$, $\tau^{W}_{7}$ and $\tau^{W}_{8}$ are the same tree and it turns out that we incidentally solve the embedded method up to third order as well. To work around this restriction, we need to have the $a$'s,  $g$'s and  $p$'s to be independent of the $b$'s and to solve separately for the $\widehat{b}$'s to ensure that we get a second-order embedded method.

\paragraph{Second approach to solving the order conditions}

In this approach we impose a horizontal structure on the $g$ coefficients akin to the horizontally adaptive method described in \cite{Rainwater_Tokman_2016}. In order to impose a horizontal structure on $g$ we make the following substitutions: $g_{1,2}\, \to g_{1,1}\,$, $g_{1,3}\, \to g_{1,1}\,$, $g_{2,2}\, \to g_{2,1}\,$, $g_{2,3}\, \to g_{2,1}\,$, $g_{3,2}\, \to g_{3,1}\,$, and $g_{3,3}\, \to g_{3,1}\,$. We also note that the first order condition reduces to $b_1 \to {1}/{p_{1,1}}$ again. Order-conditions $\tau^{W}_{2}$ and $\tau^{W}_{3}$ imply $g_{3,1}\, \to 1$. We solve order-conditions $\tau^{W}_{2}$ and $\tau^{W}_{3}$ and get two possible solutions for a subset of the variables. We plugin the first solution and solve order conditions $\tau^{W}_{4}$, $\tau^{W}_{5}$ and $\tau^{W}_{6}$. We again get multiple solutions and we use the first solution to substitute into the order conditions and proceed to solve order conditions $\tau^{W}_{7}$ and $\tau^{W}_{8}$. We again get multiple solutions for the remaining coefficients. One of the solutions returned by \Mathematica gives a non-singular system, and using it leads to the following family of order three method coefficients:
\begin{equation}
\begin{split}
g_{1,2}\,\to g_{1,1}\,,
g_{1,3}\,\to g_{1,1}\,,
g_{2,2}\,\to g_{2,1}\,,
g_{2,3}\,\to g_{2,1}\,,
g_{3,2}\,\to g_{3,1}\,,
g_{3,3}\,\to g_{3,1}\,,\\
b_{1}\, \to \dfrac{1}{p_{1,1}}, b_{2}\,\to -\dfrac{a_{2,2}\, (3 a_{2,1}\, p_{1,1}\,-4)}{2 a_{2,1}^2\,
   p_{1,1}^2},
b_{3}\,\to -\dfrac{3 a_{2,1}\, p_{1,1}\,-4}{a_{2,1}^2\, p_{1,1}^2\, (6 p_{3,1}\,+3 p_{3,2}\,+p_{3,3})},\\
p_{2,1}\,\to 0, a_{1,1}\,\to \dfrac{a_{2,1}}{2}, g_{3,1}\,\to 1, g_{2,1}\,\to \dfrac{2 (3
   a_{2,1}\, g_{1,1}\, p_{1,1}\,-a_{2,1}\, p_{1,1}\,-2 g_{1,1})}{3 a_{2,1}\, p_{1,1}\,-4},\\
   p_{2,2}\,\to -\dfrac{2 \left(9 a_{1,1}\,
   a_{2,1}\, a_{2,2}\, p_{1,1}\, p_{2,1}\,-6 a_{1,1}\, a_{2,1}\, p_{1,1}\,-12 a_{1,1}\, a_{2,2}\, p_{2,1}\,+8 a_{1,1}\,+6 a_{2,1}^2\, p_{1,1}\,-4 a_{2,1}\,\right)}{3 a_{1,1}\,
   a_{2,2}\, (3 a_{2,1}\, p_{1,1}\,-4)}.
\end{split}
\end{equation}
After making the above substitutions in the embedded method, we obtain the following solutions for the embedded coefficients:
\[
\widehat{b}_{1} \to \frac{1}{p_{1,1}}, \quad \widehat{b}_{2}\to -\frac{a_{2,2}\, (3 a_{2,1}\, p_{1,1}\,-4)}{2 a_{2,1}^2\, p_{1,1}^2},
\]
with $\widehat{b}_{3}$ a free parameter that can be chosen to obtain a second order embedded method.

A solution to the above family leads to the third order EPIRK-$W$ method ({\sc epirkw3b}) in Figure \ref{fig:epirk-w-coefficients-2}, with a second order embedded  method that overcomes the limitation of the embedded method of {\sc eprikw3a}.
\begin{figure}[tbh]
\caption{Coefficients for {\sc epirkw3b}\label{fig:epirk-w-coefficients-2}}
\begin{equation*}
\begin{split}
a &= \begin{bmatrix}
0.22824182961171620396 & 0 & 0\\
0.45648365922343240794 & 0.33161664063356950085 & 0
\end{bmatrix},\\
b &= \begin{bmatrix}
1 \\ 2.0931591383832578214 \\ 1.2623969257900804404
\end{bmatrix}^{T},\\
\widehat{b} &= \begin{bmatrix}
1 \\ 2.0931591383832578214 \\ 1
\end{bmatrix}^{T},\\
g &= \begin{bmatrix}
0 & 0 & 0\\
0.34706341174296320958 & 0.34706341174296320958 & 0.34706341174296320958\\
1 & 1 & 1
\end{bmatrix},\\
p &= \begin{bmatrix}
1 & 0 & 0\\
0 & 2.0931604100438501004 & 0\\
1 & 1 & 1
\end{bmatrix}.
\end{split} 
\end{equation*}
\end{figure}
%

\section{EPIRK-$K$ methods}\label{sec:EPIRK-K}

$W$-methods have the advantage that any approximation of the Jacobian that ensures stability can be used, therefore they have the potential for attaining excellent computational efficiency in comparison to methods that require exact Jacobians. The drawback of the $W$-methods, however, is the very large number of order conditions that need to be solved for constructing the  method  \cite[Section IV.7]{Hairer_book_II}.

In order to reduce the number of order conditions Krylov-based methods ($K$-methods) were developed for Rosenbrock integrators in \cite{Tranquilli_2014_ROK} and for exponential integrators in \cite{Tranquilli_2014_ExpK}.  $K$-methods are built in the framework of $W$-methods and use a very specific approximation to the Jacobian constructed in the Krylov space. This approximation allows  \textit{TW}-trees with linear sub-trees having fat root to be re-colored (as meagre), leading to the new class of \textit{TK}-trees. The recoloring results in substantially fewer trees, and therefore fewer order conditions for the $K$-methods \cite[Lemma 3.2, 3.3]{Tranquilli_2014_ROK}.

In this section we derive $K$-methods in the framework of EPIRK integrators.

\subsection{Krylov-subspace approximation of Jacobian}

$K$-methods build the Krylov-subspace \textit{M}-dimensional Krylov-subspace $\mathcal{K}_M$, $M \ll N$, based on the exact Jacobian $\mathbf{J}_n$ and the ODE function value $f_n$ at the current time step $t_n$:
\begin{eqnarray}
\mathcal{K}_M = \text{span}\{f_n, \mathbf{J}_{n}f_n, \mathbf{J}_{n}^2f_n, \hdots, \mathbf{J}_{n}^{M-1}f_n\}.
\end{eqnarray}
The modified Arnoldi iteration \cite{van_der_Vorst_2003_Krylov} computes an orthonormal basis $\mathbf{V}$ and upper-Hessenberg matrix $\mathbf{H}$ such that:
\begin{eqnarray}
\mathcal{K}_M = \text{span}\{v_1, \hdots, v_M\}, \quad
\mathbf{V} = [v_1,  \hdots, v_M] \in \mathbb{R}^{N \times M}, \quad
\mathbf{V}^{T}\mathbf{V} = \mathbf{I}_M, \quad
\mathbf{H} = \mathbf{V}^T \mathbf{J}_{n} \mathbf{V} \in \mathbb{R}^{M \times M}.
\end{eqnarray}
Using $\mathbf{H}$ and $\mathbf{V}$ the approximation of the Jacobian matrix $\mathbf{J}_{n}$ in the Krylov sub-space is defined  as
\begin{eqnarray}
\label{eqn:approximate_Jacobian_in_Krylov}
\mathbf{A}_{n} = \mathbf{V}\,\mathbf{H}\,\mathbf{V}^T = \mathbf{V} \mathbf{V}^T \mathbf{J}_{n} \mathbf{V} \mathbf{V}^T \in \mathbb{R}^{N \times N}.
\end{eqnarray}
This approximation of the Jacobian is used by $K$-methods. The important properties of this approximation are given by the following Lemmas from \cite{Tranquilli_2014_ExpK}.

\begin{lemma}[Powers of $\mathbf{A}_{n}$ \cite{Tranquilli_2014_ExpK}]
\label{Lemma:powers_of_a0}
Powers of $\mathbf{A}_{n}$: $\mathbf{A}_{n}^k = \mathbf{V} \mathbf{H}^k \mathbf{V}^T$ for any $k \ge 1$.
\end{lemma}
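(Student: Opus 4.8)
The plan is to prove the identity by induction on $k$, the only non-trivial ingredient being the orthonormality relation $\mathbf{V}^T\mathbf{V} = \mathbf{I}_M$ satisfied by the Arnoldi basis. The base case $k=1$ is immediate, since $\mathbf{A}_n = \mathbf{V}\mathbf{H}\mathbf{V}^T$ is precisely the definition \eqref{eqn:approximate_Jacobian_in_Krylov} of the Krylov-subspace approximation of the Jacobian.

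For the inductive step I would assume $\mathbf{A}_n^k = \mathbf{V}\mathbf{H}^k\mathbf{V}^T$ and expand $\mathbf{A}_n^{k+1} = \mathbf{A}_n^k\,\mathbf{A}_n = \bigl(\mathbf{V}\mathbf{H}^k\mathbf{V}^T\bigr)\bigl(\mathbf{V}\mathbf{H}\mathbf{V}^T\bigr)$. The two inner factors $\mathbf{V}^T$ and $\mathbf{V}$ meet in the middle, and since the columns of $\mathbf{V}$ are orthonormal, $\mathbf{V}^T\mathbf{V} = \mathbf{I}_M$ collapses to the identity on the $M$-dimensional Krylov space. Hence $\mathbf{A}_n^{k+1} = \mathbf{V}\mathbf{H}^k(\mathbf{V}^T\mathbf{V})\mathbf{H}\mathbf{V}^T = \mathbf{V}\mathbf{H}^{k+1}\mathbf{V}^T$, which closes the induction.

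There is essentially no conceptual obstacle; the result follows directly from the idempotent projector structure of $\mathbf{V}\mathbf{V}^T$. The only point requiring care is dimensional bookkeeping: since $\mathbf{V} \in \mathbb{R}^{N \times M}$, the outer product $\mathbf{V}\mathbf{V}^T \in \mathbb{R}^{N \times N}$ is merely an orthogonal projector and not the full identity, whereas $\mathbf{V}^T\mathbf{V} = \mathbf{I}_M$ genuinely is the identity on $\mathbb{R}^M$. It is exactly this asymmetry that makes the telescoping go through, and that prevents one from naively concluding $\mathbf{A}_n = \mathbf{J}_n$. The same mechanism underlies the analogous reductions of $\varphi_k(h\mathbf{A}_n)$ to expressions in $\varphi_k(h\mathbf{H})$ on which the EPIRK-$K$ order-condition theory will subsequently rely.
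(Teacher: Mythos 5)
Your induction is correct: the base case is the definition \eqref{eqn:approximate_Jacobian_in_Krylov}, and the inductive step collapses the inner product via $\mathbf{V}^T\mathbf{V} = \mathbf{I}_M$, which is exactly the standard argument (the paper itself states this lemma without proof, importing it from the cited reference). The only cosmetic quibble is that the mechanism at work is the orthonormality identity $\mathbf{V}^T\mathbf{V} = \mathbf{I}_M$ rather than the idempotence of the projector $\mathbf{V}\mathbf{V}^T$, but your displayed computation already uses the right identity, so nothing is missing.
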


\begin{lemma}[Evaluation of $\varphi$ functions \eqref{eqn:phi_k} on the approximate Jacobian \eqref{eqn:approximate_Jacobian_in_Krylov} \cite{Tranquilli_2014_ExpK}]
\label{Lemma:phik_reduced}
We have that:
\[
\varphi_k(h\,\gamma\,\mathbf{A}_{n}) = \frac{1}{k!}\left(\mathbf{I}_N - \mathbf{V}\mathbf{V}^T\right) + \mathbf{V}\, \varphi_k(h\,\gamma\,\mathbf{H})\, \mathbf{V}^T, \quad k = 1, 2, \hdots .
\]
\end{lemma}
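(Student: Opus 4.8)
The plan is to prove the identity by expanding $\varphi_k(h\,\g\,\A_n)$ as a power series in $\A_n$ and reducing it term by term to a series in $\Hb$, invoking Lemma~\ref{Lemma:powers_of_a0} to rewrite the powers of the projected Jacobian. First I would use the series representation in \eqref{eqn:phi_k}, namely $\varphi_k(z)=\sum_{i=0}^\infty z^i/(i+k)!$, to write
\[
\varphi_k(h\,\g\,\A_n) = \sum_{i=0}^\infty \frac{(h\,\g)^i}{(i+k)!}\,\A_n^i .
\]
This series converges for any matrix argument since $\varphi_k$ is entire, so the manipulations below are justified.

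The main subtlety is that Lemma~\ref{Lemma:powers_of_a0} is stated only for exponents $i\ge 1$, and indeed the $i=0$ term must be handled separately: $\A_n^0=\I_N$, whereas $\V\Hb^0\V^T=\V\V^T$, and these differ because $\V$ has orthonormal columns only in the sense $\V^T\V=\I_M$, so that $\V\V^T$ is merely the orthogonal projector onto $\K_M$ rather than the identity on $\R^N$. I would therefore peel off the $i=0$ term and apply Lemma~\ref{Lemma:powers_of_a0} to the remaining ones, obtaining
\[
\varphi_k(h\,\g\,\A_n) = \frac{1}{k!}\,\I_N + \sum_{i=1}^\infty \frac{(h\,\g)^i}{(i+k)!}\,\V\,\Hb^i\,\V^T .
\]
Pulling $\V$ and $\V^T$ out of the sum then gives $\frac{1}{k!}\,\I_N + \V\left(\sum_{i=1}^\infty \frac{(h\,\g)^i}{(i+k)!}\,\Hb^i\right)\V^T$.

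To finish, I would recognize the bracketed sum as the full $\varphi_k$ series in $\Hb$ with its own $i=0$ term removed, i.e.\ $\sum_{i=1}^\infty \frac{(h\,\g)^i}{(i+k)!}\,\Hb^i = \varphi_k(h\,\g\,\Hb) - \frac{1}{k!}\,\I_M$. Substituting this and using $\V\,\I_M\,\V^T = \V\V^T$ yields
\[
\varphi_k(h\,\g\,\A_n) = \frac{1}{k!}\,\I_N - \frac{1}{k!}\,\V\V^T + \V\,\varphi_k(h\,\g\,\Hb)\,\V^T = \frac{1}{k!}\left(\I_N - \V\V^T\right) + \V\,\varphi_k(h\,\g\,\Hb)\,\V^T,
\]
which is exactly the claimed formula. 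The only genuine obstacle is the bookkeeping at $i=0$ described above; once that term is isolated correctly, the remainder is a routine rearrangement of absolutely convergent series, and the result holds uniformly for every $k\ge 1$.
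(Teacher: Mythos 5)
Your proof is correct. The paper does not actually prove this lemma in the text---it is imported verbatim from \cite{Tranquilli_2014_ExpK}---but your argument (expand $\varphi_k(h\,\g\,\A_n)$ as the entire power series, isolate the $i=0$ term where $\A_n^0=\I_N$ differs from $\V\V^T$, apply Lemma~\ref{Lemma:powers_of_a0} to the $i\ge 1$ terms, and reassemble $\varphi_k(h\,\g\,\Hb)$) is exactly the standard derivation of this identity, and it correctly identifies the only delicate point, namely the $i=0$ bookkeeping.
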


In order to derive the Krylov formulation of the EPIRK methods \eqref{eqn:EPIRK-W-Formulation-3-Stage} we need to extend Lemma \ref{Lemma:phik_reduced} to the evaluation of $\psi$ functions \eqref{eqn:simplified_psi_function_defn}. We have the following result.

\begin{lemma}[Evaluation of $\psi$ functions \eqref{eqn:simplified_psi_function_defn} on the approximate Jacobian \eqref{eqn:approximate_Jacobian_in_Krylov}]
\label{Lemma:psik_reduced}
\[
\psi_j(h\,\gamma\,\mathbf{A}_{n}) = \widetilde{p}_j \left(\mathbf{I}_N - \mathbf{V}\mathbf{V}^{T}\right) +  \mathbf{V}\, \psi_{j}\left(h\gamma \mathbf{H}\right)\, \mathbf{V}^{T}, \quad j = 1, 2, \hdots \quad
\textnormal{where}\quad \widetilde{p}_j = \displaystyle\sum_{k=1}^{j}\frac{p_{j,k}}{k!}.
\]
\end{lemma}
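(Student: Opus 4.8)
The plan is to reduce the statement to Lemma \ref{Lemma:phik_reduced} by exploiting the linearity of the definition \eqref{eqn:simplified_psi_function_defn}, in which $\psi_j$ is nothing more than a fixed linear combination of the scalar functions $\varphi_k$. Since $\psi_j(z) = \sum_{k=1}^{j} p_{j,k}\,\varphi_k(z)$ is an identity of analytic functions, it transfers verbatim to the matrix argument $z = h\gamma\,\mathbf{A}_{n}$ through the standard matrix-function calculus, so that $\psi_j(h\gamma\,\mathbf{A}_{n}) = \sum_{k=1}^{j} p_{j,k}\,\varphi_k(h\gamma\,\mathbf{A}_{n})$. The entire argument is then a termwise substitution followed by a regrouping.

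First I would substitute, into each summand, the closed form of $\varphi_k(h\gamma\,\mathbf{A}_{n})$ furnished by Lemma \ref{Lemma:phik_reduced}. This splits every term into a piece proportional to the complementary projector $\mathbf{I}_N - \mathbf{V}\mathbf{V}^T$ carrying the scalar weight $1/k!$, and a Krylov-subspace piece $\mathbf{V}\,\varphi_k(h\gamma\,\mathbf{H})\,\mathbf{V}^T$. Next I would collect the two structurally distinct contributions separately. Summing the projector pieces pulls out the common factor $\mathbf{I}_N - \mathbf{V}\mathbf{V}^T$ against the scalar $\sum_{k=1}^{j} p_{j,k}/k!$, which is exactly the constant $\widetilde{p}_j$ in the statement. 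Summing the Krylov pieces and factoring $\mathbf{V}(\cdots)\mathbf{V}^T$ leaves $\sum_{k=1}^{j} p_{j,k}\,\varphi_k(h\gamma\,\mathbf{H})$ in the middle, which by the definition \eqref{eqn:simplified_psi_function_defn} applied to the reduced matrix $\mathbf{H}$ is precisely $\psi_j(h\gamma\,\mathbf{H})$. Reassembling the two parts yields the claimed identity.

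There is no substantive obstacle here: the lemma is a direct corollary of Lemma \ref{Lemma:phik_reduced} together with the linearity of $\psi_j$ in the $\varphi_k$. The only point meriting care is the bookkeeping of the constant weights, namely recognizing the projector coefficient as $\widetilde{p}_j = \sum_{k=1}^{j} p_{j,k}/k!$. A convenient consistency check guarding against index or sign errors is that, because $\varphi_k(0) = 1/k!$ by \eqref{eqn:phi_k_recursive_formulation_and_phi_k_0}, this coefficient coincides with $\psi_j(0)$, so the formula correctly reduces to $\widetilde{p}_j\,\mathbf{I}_N$ on the orthogonal complement of the Krylov subspace, exactly as expected from Lemma \ref{Lemma:phik_reduced} in the scalar case.
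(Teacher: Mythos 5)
Your proposal is correct and follows essentially the same route as the paper: substitute the closed form of $\varphi_k(h\gamma\,\mathbf{A}_{n})$ from Lemma \ref{Lemma:phik_reduced} into the linear combination \eqref{eqn:simplified_psi_function_defn}, then regroup the $\left(\mathbf{I}_N - \mathbf{V}\mathbf{V}^{T}\right)$ terms into $\widetilde{p}_j$ and the Krylov terms into $\mathbf{V}\,\psi_j(h\gamma\,\mathbf{H})\,\mathbf{V}^{T}$. The consistency check via $\varphi_k(0)=1/k!$ is a nice addition but not needed.
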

\begin{proof}
From the definition \eqref{eqn:simplified_psi_function_defn} we have:
\[
\begin{split}
\psi_{j}(h\gamma \mathbf{A}_{n}) &= \displaystyle\sum_{k=1}^{j} p_{j,k}\, \varphi_k(h\,\gamma\,\mathbf{A}_{n}) \\
&= \displaystyle\sum_{k=1}^{j} p_{j,k}\,  \bigg[ \frac{1}{k!} (\mathbf{I}_N - \mathbf{V}\mathbf{V}^{T}) + \mathbf{V}\,  \varphi_k(h\,\gamma\,\mathbf{H})\,  \mathbf{V}^{T} \bigg] \\
 &=  (\mathbf{I}_N - \mathbf{V}\mathbf{V}^{T}) \, \bigg[\displaystyle\sum_{k=1}^{j} \frac{p_{j,k}}{k!} \bigg] + \mathbf{V}\,  \psi_{j}(h\gamma \mathbf{H})\,  \mathbf{V}^{T} \\
 &= \widetilde{p}_j (\mathbf{I}_N - \mathbf{V}\mathbf{V}^{T}) +  \mathbf{V}\, \psi_{j}(h\gamma \mathbf{H})\, \mathbf{V}^{T}.
\end{split}
\]
\end{proof}

\subsection{Formulation of EPIRK-$K$ methods}
\label{sec:EPIRK-K-formulation}

We now derive the Krylov-subspace formulation of the EPIRK methods, which will be called  EPIRK-$K$ methods. For this we begin with the  EPIRK-$W$ formulation \eqref{eqn:EPIRK-W-Formulation} and use the Jacobian approximation \eqref{eqn:approximate_Jacobian_in_Krylov}.
The first step in the method derivation is to split all vectors appearing in the method formulation \eqref{eqn:EPIRK-W-Formulation} into components within the Krylov-subspace and components orthogonal to it, as follows:
\begin{enumerate}
\item Splitting the internal stage vectors leads to:
\begin{equation}
\label{eqn:Stage_vector_In_Krylov_subspace}
Y_i = \mathbf{V}\lambda_i + Y_i^{\bot} \qquad \textnormal{where} \quad
\mathbf{V}^{T} Y_i = \lambda_i, \quad
\left(\mathbf{I}_N - \mathbf{V}\mathbf{V}^{T}\right)\,Y_i = Y_i^{\bot},
\end{equation}
where $Y_0 \equiv y_n$.
\item  Splitting the right-hand side function evaluated at the internal stage vectors gives:
\begin{equation}
\label{eqn:F_Function_In_Krylov_subspace}
f_i :=  f(Y_i) = \mathbf{V}\eta_i + f_i^{\bot} \qquad \textnormal{where} \quad
\mathbf{V}^{T} f_i = \eta_i, \quad
\left(\mathbf{I}_N - \mathbf{V}\mathbf{V}^{T}\right) f_i = f_i^{\bot},
\end{equation}
where $f_0 \equiv f(y_n)$.
\item  Splitting the non-linear Taylor remainder terms of the right-hand side functions yields:
\begin{equation} \label{eqn:R_Function_In_Krylov_subspace}
\begin{split}
r(Y_i) &= f(Y_i) - f(y_{n}) - \mathbf{A}_{n}\, (Y_i - y_{n})  = f_i - f_0 - \mathbf{V}\, \mathbf{H}\, \mathbf{V}^{T}\, (Y_i - y_{n}), \\
\textnormal{where} \quad \mathbf{V}^{T}\, r(Y_i)  &= \eta_i - \eta_0 - \mathbf{H} \, (\lambda_i - \lambda_0), \\
\left(\mathbf{I}_N - \mathbf{V}\mathbf{V}^{T}\right)\, r(Y_i)  &= f_i^{\bot} - f_0^{\bot}.
\end{split}
\end{equation}
\item Splitting the forward differences of the non-linear remainder terms leads to:
\begin{equation}
\label{eqn:Delta_r_Function_In_Krylov_subspace}
\begin{split}
\widetilde{r}_{(j-1)} &:=\Delta^{(j-1)}r(y_{n}) = \mathbf{V}\, d_{(j-1)} + \widetilde{r}_{(j-1)}^{\bot}, \\
\textnormal{where} &\quad \mathbf{V}^{T}\,\widetilde{r}_{(j-1)}   = d_{(j-1)}, \quad
\left(\mathbf{I}_N - \mathbf{V}\mathbf{V}^{T}\right)\, \widetilde{r}_{(j-1)}  = \widetilde{r}_{(j-1)}^{\bot}.
\end{split}
\end{equation}
\end{enumerate}
In the above equations, $\mathbf{V}\lambda_i$, $\mathbf{V}\eta_i$ and $\mathbf{V} d_{(j-1)}$ are components of $Y_i$, $f_i$ and  $\Delta^{(j-1)}r(y_{n})$ in the Krylov-subspace, whereas $Y_i^{\bot}$, $f_i^{\bot}$ and $\widetilde{r}_{(j-1)}^{\bot}$ lie in the orthogonal subspace.

Using the above equations and Lemma \ref{Lemma:psik_reduced}, the intermediate stage equations of the method \eqref{eqn:EPIRK-W-Formulation}
\begin{equation}
\begin{split}
Y_i &= y_{n}\, + a_{i,1}\,  \psi_{1}\left(g_{i,1}\,h\,\mathbf{V}\,\mathbf{H}\,\mathbf{V}^T\right)\, hf(y_{n}) + \displaystyle \sum_{j = 2}^{i} a_{i,j}\, \psi_{j}\left(g_{i,j}\,h\,\mathbf{V}\,\mathbf{H}\,\mathbf{V}^T\right)\, h\Delta^{(j-1)}r(y_{n}),
\end{split}
\end{equation}
become:
\begin{eqnarray}
Y_i = \mathbf{V}\lambda_i + Y_i^{\bot}
&=& y_{n}\, +  h\, a_{i,1} \bigg(\widetilde{p_1} \, (f_0 - \mathbf{V}\eta_0) +  \mathbf{V}\,\psi_{1}(h\,g_{i,1}\, \mathbf{H}) \eta_0\bigg)  \nonumber \\
&& + \displaystyle\sum_{j = 2}^{i} h\, a_{i,j}\, \bigg(\widetilde{p}_j \, \widetilde{r}_{(j-1)}^{\bot} +  \mathbf{V}\,\psi_{j}(h\,g_{i,j}\, \mathbf{H}) \, d_{(j-1)}\bigg), \label{eqn:full_stage_recovery}
\end{eqnarray}
where  $\widetilde{r}_{(j-1)}^{\bot}$ and $d_{(j-1)}$ can be proved to be, as is done in appendix \ref{sec:AppendixB},
\begin{subequations}
\begin{eqnarray}
\label{eqn:reduced_stage_vector_d}
d_{(j-1)} &=& \displaystyle\sum_{k=0}^{j-1} \bigg((-1)^k {j-1 \choose k} \eta_{j-1-k} - \mathbf{H} \bigg((-1)^k {j-1 \choose k} \lambda_{j-1 -k}\bigg)\bigg), \\
\label{eqn:reduced_stage_vector_r}
\widetilde{r}_{(j-1)}^{\bot} &=&
\displaystyle\sum_{k=0}^{j-1} \bigg((-1)^k {j-1 \choose k} (f_{j-1-k} - \mathbf{V} \eta_{j-1-k})\bigg).
\end{eqnarray}
\end{subequations}
The reduced stage vector for the $K$-method is obtained by multiplying  the full stage vector in equation \eqref{eqn:full_stage_recovery} by $\mathbf{V}^T$ from the left,
\begin{eqnarray}
\lambda_i
&=& \lambda_0 + h \, a_{i,1} \psi_{1}(h\, g_{i,1}\, \mathbf{H})\, \eta_0 + \displaystyle\sum_{j = 2}^{i} h\, a_{i,j}\, \psi_{j}(h\, g_{i,j}\, \mathbf{H}) \, d_{(j-1)},
\end{eqnarray}
and the component of the full stage vector, when multiplied by $(I-\mathbf{V}\mathbf{V}^T)$, in the orthogonal subspace is,
\begin{eqnarray}
Y_i^{\bot}
&=&(y_{n}\, - \mathbf{V} \lambda_0) + h \, a_{i,1} \, \widetilde{p_1} \, (f_0 - \mathbf{V}\eta_0)+ \displaystyle\sum_{j = 2}^{i} h\, a_{i,j}\, \widetilde{p}_j \, \widetilde{r}_{(j-1)}^{\bot}.
\end{eqnarray}
The full-stage vector can be recovered by first projecting the reduced stage vector $\lambda_i$ back into full space and adding the piece that is orthogonal to it, $Y_i^{\bot}$, as done in equation \eqref{eqn:full_stage_recovery}.

Similarly, the computation of the next solution in the method \eqref{eqn:EPIRK-W-Formulation}
\begin{equation}
\begin{split}
y_{n+1} &= y_{n}\, + b_{1}\, \psi_{s,1}(g_{s,1}\,h\,\mathbf{A}_{n})\, hf(y_{n}) + \displaystyle\sum_{j = 2}^{s} b_{j}\, \psi_{s,j}(g_{s,j}\,h\,\mathbf{A}_{n})\, h\Delta^{(j-1)}r(y_{n}),
\end{split}
\end{equation}
becomes:
\begin{equation}
y_{n+1} = V \lambda_s + y_{n+1}^{\bot},
\end{equation}
where 
\begin{equation}
\lambda_{s}\, = \lambda_0 + h \, b_{1}\, \psi_{1}(h\,g_{s,1}\, \mathbf{H}) \eta_0 + \displaystyle\sum_{j = 2}^{s} h\, b_{j}\, \psi_{j}(h\,g_{s,j}\, \mathbf{H}) \, d_{(j-1)},
\end{equation}
and 
\begin{equation}
y_{n+1}^{\bot} = (y_{n} - \mathbf{V} \lambda_0) + h \, b_{1}\, \widetilde{p_1} \, (f_0 - \mathbf{V}\eta_0)+ \displaystyle\sum_{j = 2}^{s} h\, b_{j}\, \widetilde{p}_j \, \widetilde{r}_{(j-1)}^{\bot}.
\end{equation}
One step of the resulting EPIRK-$K$ method (for an autonomous system) is summarized in Algorithm \ref{alg:EPIRK_algorithm}.
\begin{algorithm}
\caption{EPIRK-$K$}\label{alg:EPIRK_algorithm}
\begin{algorithmic}[1]
\State $f_0 := f(y_{n})$\Comment{Repeat for every timestep in the timespan}
\State $\mathbf{J}_{n} := \mathbf{J}(y_{n})$
\State $[\mathbf{H}, \mathbf{V}] = \textnormal{Arnoldi}(\mathbf{J}_{n}, f_0)$
\State $\lambda_0 = \mathbf{V}^{T}y_{n}\,$
\State $\eta_0 = \mathbf{V}^{T}f_0$
\For{$i=1:s - 1$}\Comment{Do for each stage of the method}
\State $d_{(i-1)} = \displaystyle\begin{cases} 0 & \text{, if } i = 1\\ \displaystyle\sum_{k=0}^{i-1} \bigg((-1)^k\, {i-1 \choose k}\, \eta_{i-1-k} - \mathbf{H}\, \bigg((-1)^k\, {i-1 \choose k}\, \lambda_{i-1 -k}\bigg)\bigg) & \text{, if }  i \geq 2 \end{cases}$
\State $\widetilde{r}_{(i-1)}^{\bot} = \displaystyle\begin{cases}
0 & \text{, if } i = 1 \\
\displaystyle\sum_{k=0}^{i-1} \bigg((-1)^k\, {i-1 \choose k}\, (f_{i-1-k} - \mathbf{V} \,\eta_{i-1-k})\bigg) & \text{, if } i\geq 2 \end{cases}$
\State $\lambda_i = \lambda_0 + h \, a_{i,1}\, \psi_{1}(h\, g_{i,1}\, \mathbf{H})\, \eta_0 + \displaystyle\sum_{j = 2}^{i} h\, a_{i,j}\, \psi_{j}(h\, g_{i,j}\, \mathbf{H}) \, d_{(j-1)}$
\State $Y_i =  \mathbf{V}\,\lambda_i + (y_{n}\, - \mathbf{V}\,\lambda_0) + h \,a_{i,1} \, \widetilde{p_{1}} \, (f_0 - \mathbf{V}\,\eta_0) + \displaystyle\sum_{j = 2}^{i} h\, a_{i,j}\, \widetilde{p}_j \,\widetilde{r}_{(j-1)}^{\bot}$
\State $f_i = f(Y_i)$
\State $\eta_i = \mathbf{V}^{T}f_i$
\EndFor
\State $d_{(s-1)} = \displaystyle\sum_{k=0}^{s-1} \bigg((-1)^k\, {s-1 \choose k}\, \eta_{s-1-k} - \mathbf{H}\, \bigg((-1)^k\, {s-1 \choose k}\, \lambda_{s-1 -k}\bigg)\bigg)$
\State $\widetilde{r}_{(s-1)}^{\bot} =
\displaystyle\sum_{k=0}^{s-1} \bigg((-1)^k\, {s-1 \choose k}\, (f_{s-1-k} - \mathbf{V}\, \eta_{s-1-k})\bigg)$
\State $\lambda_s = \lambda_0 + h \, b_{1}\, \psi_{1}(h\, g_{s,1}\, \mathbf{H})\, \eta_0 + \displaystyle\sum_{j = 2}^{s} h\, b_{j}\, \psi_{j}(h \,g_{s,j}\,\mathbf{H}) \, d_{(j-1)}$
\State $y_{n+1} =  \mathbf{V}\, \lambda_s + (y_{n}\, - \mathbf{V}\,\lambda_0) + h \,b_{1}\, \widetilde{p_{1}}\, (f_0 - \mathbf{V}\,\eta_0) + \displaystyle\sum_{j = 2}^{s} h\, b_{j}\, \widetilde{p}_j \,\widetilde{r}_{(j-1)}^{\bot}$ 
\end{algorithmic}
\end{algorithm}

\subsection{Order conditions theory for EPIRK-$K$ methods}
\label{sec:EPIRK-K-order}

$K$-methods construct a single Krylov-subspace per timestep, and use it to approximate the Jacobian.  All stage vectors are also computed in this reduced space, before being projected back into the full space between successive stages. The order condition theory accounts for this computational procedure  \cite[Theorem 3.6]{Tranquilli_2014_ROK}. 

Before we discuss the order conditions for the $K$-methods, we define the trees that arise in the expansion of their solutions. Recalling that a linear tree is one where each node has only one child, consider the following definition:
\begin{definition}[TK-Trees \cite{Tranquilli_2014_ROK}]\leavevmode
\begin{enumerate}
\item[] $TK = \bigg\{\text{TW-trees: no linear sub-tree has a fat (empty) root}\bigg\}.$
\item[] $TK(k) = \bigg\{\text{TW-trees: no linear sub-tree of order less than or equal to k has a fat (empty) root}\bigg
\}.$
\end{enumerate}
\end{definition}

\begin{theorem}
Trees corresponding to series expansion of the numerical solution of $K$-method with Krylov-subspace dimension $M$ are $TK(M)$.
\end{theorem}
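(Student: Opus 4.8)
The plan is to establish two inclusions: that every elementary differential appearing in the $K$-method B-series coincides with that of a $TK(M)$ tree, and that distinct $TK(M)$ trees stay distinct, so that the support is exactly $TK(M)$. Since the $K$-method is the $W$-method \eqref{eqn:EPIRK-W-Formulation} evaluated with the specific approximation $\mathbf{A}_{n} = \mathbf{V}\mathbf{H}\mathbf{V}^{T} = P\,\mathbf{J}_{n}\,P$, where $P := \mathbf{V}\mathbf{V}^{T}$ is the orthogonal projector onto $\mathcal{K}_M$, the numerical solution already carries a B-series over the full set of \textit{TW}-trees (the one produced by Algorithm \ref{alg:EPIRK_w_b_series}). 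The whole argument therefore reduces to deciding which \textit{TW}-trees produce identical elementary differentials once the Krylov structure of $\mathbf{A}_{n}$ is taken into account.

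The engine of the proof is a Krylov exactness estimate: for every $0 \le j \le M-1$ one has $\mathbf{A}_{n}^{j} f_0 = \mathbf{J}_{n}^{j} f_0$, and more generally any product of $j$ factors drawn from $\{\mathbf{A}_{n},\mathbf{J}_{n}\}$ applied to $f_0$ equals $\mathbf{J}_{n}^{j} f_0$. I would prove this by induction on $j$, using $P f_0 = f_0$ (since $f_0 \in \mathcal{K}_M$), the inclusion $\mathbf{J}_{n}^{j} f_0 \in \mathcal{K}_M$ valid precisely for $j \le M-1$, and the identity $\mathbf{A}_{n} = P\mathbf{J}_{n}P$; the closed form $\mathbf{A}_{n}^{k} = \mathbf{V}\mathbf{H}^{k}\mathbf{V}^{T}$ of Lemma \ref{Lemma:powers_of_a0} yields the same conclusion through the Arnoldi relation. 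The threshold is sharp: $\mathbf{A}_{n}^{M} f_0 = \mathbf{V}\mathbf{H}^{M}\mathbf{V}^{T} f_0 = P\mathbf{J}_{n}^{M} f_0$ generally differs from $\mathbf{J}_{n}^{M} f_0$, which is exactly what keeps longer fat chains irreducible.

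Next I would translate this into a recoloring lemma on trees. A fat node that roots a linear sub-tree of order $q$ sits atop a chain of singly-branched nodes terminating in a meagre leaf, so the vector it contributes to the elementary differential is a product of $q-1$ factors from $\{\mathbf{A}_{n},\mathbf{J}_{n}\}$ acting on $f_0$. By the exactness estimate this vector equals $\mathbf{J}_{n}^{q-1} f_0$ whenever $q \le M$, which is precisely the value contributed when the fat root is recolored meagre (so its factor becomes $\mathbf{J}_{n}$ rather than $\mathbf{A}_{n}$). Because $F(\tau)$ depends on this sub-tree only through the vector it returns, recoloring a fat-rooted linear sub-tree of order $\le M$ leaves $F(\tau)$ unchanged. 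Iterating the recoloring from the leaves downward drives every \textit{TW}-tree to a canonical representative in which no fat-rooted linear sub-tree of order $\le M$ survives, i.e.\ to a $TK(M)$ tree, without altering its elementary differential; this shows the numerical B-series is supported on $TK(M)$.

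The delicate step, which I expect to be the main obstacle, is the converse: showing the support is exactly $TK(M)$ rather than a proper subset, i.e.\ that distinct $TK(M)$ trees are not forced to share an elementary differential. The recoloring move is the only identification induced by the Krylov approximation, and on $TK(M)$ no such move is available by definition; combined with the classical linear independence of elementary differentials attached to distinct Butcher trees for a generic vector field, this yields that the $TK(M)$ differentials remain independent and that the reduction to canonical form is a well-defined bijection from \textit{TW}-tree equivalence classes onto $TK(M)$. Making this well-definedness and uniqueness-of-representative argument rigorous is the technical heart of the statement; it parallels the Rosenbrock result \cite[Theorem 3.6]{Tranquilli_2014_ROK} and the exponential $W$/$K$ analysis of \cite{Tranquilli_2014_ExpK}, to which the remaining bookkeeping can be reduced.
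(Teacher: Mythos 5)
Your proposal is correct and takes essentially the same route as the paper, whose proof consists of invoking Lemmas \ref{Lemma:powers_of_a0}--\ref{Lemma:psik_reduced} together with the recoloring lemmas of \cite{Tranquilli_2014_ExpK}: your Krylov exactness estimate $\mathbf{A}_n^{j}f_0=\mathbf{J}_n^{j}f_0$ for $j\le M-1$ and the consequent recoloring of fat-rooted linear sub-trees of order $\le M$ is precisely the content of those cited results. The only difference is presentational --- you reconstruct the mechanism explicitly and flag the exactness-of-support question, whereas the paper defers both entirely to the citations.
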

\begin{proof}
Using lemmas \ref{Lemma:powers_of_a0}, \ref{Lemma:phik_reduced}, \ref{Lemma:psik_reduced}, and \cite[Lemmas 3, 4]{Tranquilli_2014_ExpK} we arrive at the desired result.
\end{proof}

The order conditions are derived from matching the coefficients of the B-series expansion of the numerical solution to those of the B-series expansion of the exact solution. If the expansion is made in the elementary differentials corresponding to the \textit{TK} trees, then for a fourth order method there is a single additional tree ($\tau^{K}_{8}$) in the numerical solution having both meagre and fat nodes in comparison to the \textit{T}-trees up to order four for a classical EPIRK method. In contrast with the $W$-method that had twenty-one order conditions, the $K$-method has just nine for a fourth order method, which is almost the same as a classical EPIRK having eight.

As mentioned earlier, \textit{TK}-trees come from re-coloring all the linear sub-trees with a fat root as meagre in the \textit{TW}-trees. This significantly reduces the number of order conditions for the $K$-method since groups of \textit{TW}-trees become isomorphic to one another after recoloring. \textit{TK}-trees up to order four are given in \cite{Tranquilli_2014_ExpK}. This also indicates that the coefficients in front of these \textit{TK}-trees in the B-series expansion can be obtained by summing together the coefficients of \textit{TW}-trees,  which become isomorphic to one another, from the corresponding expansion.

The order four conditions for EPIRK-$K$ methods with three stages  \eqref{eqn:EPIRK-W-Formulation-3-Stage} are discussed next and are summarized in Table \ref{tab:Order-EPIRK-K-3stages}.

\begin{longtable}{|>{\centering\arraybackslash}m{1.25cm}|>{\centering\arraybackslash}m{1.2cm}|>{\centering\arraybackslash}m{11cm}|}
\caption{Order conditions for the three stage EPIRK-$K$ method}
\label{tab:Order-EPIRK-K-3stages}\\
\toprule
\centering \textbf{Tree \#} &
\centering \textbf{Order} &
\centering \textbf{Order Condition: $B^{\#}(y_{n+1}) - B^{\#}\big(y(t_{n}\,+h)\big) = 0$}\tabularnewline
\midrule
\endfirsthead
\toprule
\centering \textbf{Tree \#} &
\centering \textbf{Order} &
\centering \textbf{Order Condition: $B^{\#}(y_{n+1}) - B^{\#}\big(y(t_{n}\,+h)\big) = 0$}\tabularnewline
\midrule \endhead
$\tau_{1}^{K}$&1&\begin{dmath*}[breakdepth={3}]b_{1}\, p_{1,1}\,-1= 0\end{dmath*}\\\hline
$\tau_{2}^{K}$&2&\begin{dmath*}[breakdepth={3}]\frac{1}{2} (b_{1}\, g_{3,1}\, p_{1,1}\,-1)= 0\end{dmath*}\\\hline
$\tau_{3}^{K}$&3&\begin{dmath*}[breakdepth={3}]\frac{1}{6} \left(6 a_{1,1}^2\, b_{2}\, p_{1,1}^2\, p_{2,1}\,+3 a_{1,1}^2\, b_{2}\, p_{1,1}^2\,
   p_{2,2}\,-12 a_{1,1}^2\, b_{3}\, p_{1,1}^2\, p_{3,1}\,-6 a_{1,1}^2\, b_{3}\, p_{1,1}^2\, p_{3,2}\,-2 a_{1,1}^2\, b_{3}\, p_{1,1}^2\, p_{3,3}\,+6 a_{2,1}^2\, b_{3}\,
   p_{1,1}^2\, p_{3,1}\,+3 a_{2,1}^2\, b_{3}\, p_{1,1}^2\, p_{3,2}\,+a_{2,1}^2\, b_{3}\, p_{1,1}^2\, p_{3,3}\,-2\right)= 0\end{dmath*}\\\hline
$\tau_{4}^{K}$&3&\begin{dmath*}[breakdepth={3}]\frac{1}{6} \left(b_{1}\, g_{3,1}^2\,
   p_{1,1}\,-1\right)= 0\end{dmath*}\\\hline
$\tau_{5}^{K}$&4&\begin{dmath*}[breakdepth={3}]\frac{1}{12} \left(12 a_{1,1}^3\, b_{2}\, p_{1,1}^3\, p_{2,1}\,+6 a_{1,1}^3\, b_{2}\, p_{1,1}^3\, p_{2,2}\,-24 a_{1,1}^3\, b_{3}\, p_{1,1}^3\,
   p_{3,1}\,-12 a_{1,1}^3\, b_{3}\, p_{1,1}^3\, p_{3,2}\,-4 a_{1,1}^3\, b_{3}\, p_{1,1}^3\, p_{3,3}\,+12 a_{2,1}^3\, b_{3}\, p_{1,1}^3\, p_{3,1}\,+6 a_{2,1}^3\, b_{3}\,
   p_{1,1}^3\, p_{3,2}\,+2 a_{2,1}^3\, b_{3}\, p_{1,1}^3\, p_{3,3}\,-3\right)= 0\end{dmath*}\\\hline
$\tau_{6}^{K}$&4&\begin{dmath*}[breakdepth={3}]\frac{1}{24} \left(12 a_{1,1}^2\, b_{2}\, g_{1,1}\, p_{1,1}^2\, p_{2,1}\,+6 a_{1,1}^2\, b_{2}\,
   g_{1,1}\, p_{1,1}^2\, p_{2,2}\,-24 a_{1,1}^2\, b_{3}\, g_{1,1}\, p_{1,1}^2\, p_{3,1}\,-12 a_{1,1}^2\, b_{3}\, g_{1,1}\, p_{1,1}^2\, p_{3,2}\,-4 a_{1,1}^2\, b_{3}\,
   g_{1,1}\, p_{1,1}^2\, p_{3,3}\,+12 a_{2,1}^2\, b_{3}\, g_{2,1}\, p_{1,1}^2\, p_{3,1}\,+6 a_{2,1}^2\, b_{3}\, g_{2,1}\, p_{1,1}^2\, p_{3,2}\,+2 a_{2,1}^2\, b_{3}\,
   g_{2,1}\, p_{1,1}^2\, p_{3,3}\,-3\right)= 0\end{dmath*}\\\hline
$\tau_{7}^{K}$&4&\begin{dmath*}[breakdepth={3}]\frac{1}{12} \left(12 a_{1,1}^2\, a_{2,2}\, b_{3}\, p_{1,1}^2\, p_{2,1}\, p_{3,1}\,+6 a_{1,1}^2\, a_{2,2}\, b_{3}\, p_{1,1}^2\,
   p_{2,1}\, p_{3,2}\,+2 a_{1,1}^2\, a_{2,2}\, b_{3}\, p_{1,1}^2\, p_{2,1}\, p_{3,3}\,+6 a_{1,1}^2\, a_{2,2}\, b_{3}\, p_{1,1}^2\, p_{2,2}\, p_{3,1}\,+3 a_{1,1}^2\, a_{2,2}\,
   b_{3}\, p_{1,1}^2\, p_{2,2}\, p_{3,2}\,+a_{1,1}^2\, a_{2,2}\, b_{3}\, p_{1,1}^2\, p_{2,2}\, p_{3,3}\,-1\right)= 0\end{dmath*}\\\hline
$\tau_{8}^{K}$&4&\begin{dmath*}[breakdepth={3}]\frac{1}{24} p_{1,1}^2\, \left(-24 a_{1,1}^2\, a_{2,2}\,
   b_{3}\, p_{2,1}\, p_{3,1}\,-12 a_{1,1}^2\, a_{2,2}\, b_{3}\, p_{2,1}\, p_{3,2}\,-4 a_{1,1}^2\, a_{2,2}\, b_{3}\, p_{2,1}\, p_{3,3}\,-12 a_{1,1}^2\, a_{2,2}\, b_{3}\,
   p_{2,2}\, p_{3,1}\,-6 a_{1,1}^2\, a_{2,2}\, b_{3}\, p_{2,2}\, p_{3,2}\,-2 a_{1,1}^2\, a_{2,2}\, b_{3}\, p_{2,2}\, p_{3,3}\,+12 a_{1,1}^2\, b_{2}\, g_{3,2}\, p_{2,1}\,+4
   a_{1,1}^2\, b_{2}\, g_{3,2}\, p_{2,2}\,-24 a_{1,1}^2\, b_{3}\, g_{3,3}\, p_{3,1}\,-8 a_{1,1}^2\, b_{3}\, g_{3,3}\, p_{3,2}\,-2 a_{1,1}^2\, b_{3}\, g_{3,3}\, p_{3,3}\,+12
   a_{2,1}^2\, b_{3}\, g_{3,3}\, p_{3,1}\,+4 a_{2,1}^2\, b_{3}\, g_{3,3}\, p_{3,2}\,+a_{2,1}^2\, b_{3}\, g_{3,3}\, p_{3,3}\,\right)= 0\end{dmath*}\\\hline
$\tau_{9}^{K}$&4&\begin{dmath*}[breakdepth={3}]\frac{1}{24} \left(b_{1}\, g_{3,1}^3\,
   p_{1,1}\,-1\right) = 0\end{dmath*}\\\hline
\end{longtable}

\begin{corollary}\label{Cor:K-method-implemented-as-classical}
Any EPIRK-$K$-method of order $p$ gives rise to a classical EPIRK method of order $p$ (with the same coefficients).
\end{corollary}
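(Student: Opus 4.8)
The plan is to recover the classical EPIRK method as the EPIRK-$K$ method in the degenerate case where the Krylov subspace fills the whole space, and then to show that the $K$-order conditions collapse onto the classical ones. First I would take $M=N$: then $\V$ is an orthogonal $N\times N$ matrix, so $\I_N-\V\V^T=0$ and $\A_n=\V\Hb\V^T=\V\V^T\J_n\V\V^T=\J_n$. By Lemma~\ref{Lemma:psik_reduced} the orthogonal-complement term vanishes and $\psi_j(h\gamma\A_n)=\V\,\psi_j(h\gamma\Hb)\,\V^T=\psi_j(h\gamma\J_n)$, so Algorithm~\ref{alg:EPIRK_algorithm} reduces verbatim to the classical formulation \eqref{eqn:EPIRK-Formulation} with the same $a_{i,j}$, $b_j$, $g_{i,j}$, $p_{j,k}$. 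Thus the classical method is literally the $M=N$ instance of the $K$-method, and it remains to verify that the order-$p$ property survives this specialization.

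The key consequence of the $M=N$ identification is that, by the theorem preceding this corollary, the numerical $B$-series of the classical scheme ranges over $TK$-trees ($TK(N)$, which coincides with $TK$ on trees of order $\le p\le N$) rather than over all of $TW$. I would then use the recoloring map $\rho:TK\to T$ that turns every fat node into a meagre one; it is surjective, since every $T$-tree is its own all-meagre preimage. When $\A_n=\J_n$ a fat node (which is always singly branched) becomes indistinguishable from a meagre node carrying a first derivative $f^J_K$, so $F(\sigma)=F(\rho(\sigma))$ for every $\sigma\in TK$. Grouping the numerical $B$-series by the common value $\tau=\rho(\sigma)$ therefore writes the classical coefficient on $\tau$ as a symmetry-weighted sum of the $K$-coefficients over the fibre $\rho^{-1}(\tau)\cap TK$.

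On the exact-solution side, Theorem~\ref{Theorem:TW-trees-coefficients-of-exact-solution} shows that only the all-meagre preimage contributes, because every fat-containing $TK$-tree lies in $TW\smallsetminus T$ and carries exact coefficient $0$. Consequently the classical order condition attached to each $\tau$ is exactly the sum, over $\rho^{-1}(\tau)\cap TK$, of the corresponding EPIRK-$K$ order conditions; at order four this is the statement that the single classical condition for the shape $f^J_Kf^K_{LM}f^Lf^M$ is the sum of the conditions for $\tau^K_7$ and $\tau^K_8$, while the seven remaining $T$-trees have singleton fibres. If the coefficients satisfy the $K$-conditions through order $p$, every summand vanishes for $|\tau|\le p$, hence every classical condition vanishes and the classical method built from the same coefficients has order $p$.

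The step I expect to be the main obstacle is the fibre-sum identity of the second paragraph: checking that the symmetry orders $\sigma(\tau)$ and the combinatorial multiplicities line up so that the weighted sum of $K$-coefficients over $\rho^{-1}(\tau)\cap TK$ equals the classical coefficient $\mathsf{a}(\tau)$. This is precisely the recoloring accounting carried out for Rosenbrock-$K$ methods in \cite[Lemma 3.2, 3.3, Theorem 3.6]{Tranquilli_2014_ROK} and for exponential methods in \cite{Tranquilli_2014_ExpK}; the present case is identical once Lemma~\ref{Lemma:psik_reduced} is invoked to confirm that $\psi$-multiplication by $\A_n=\J_n$ degenerates to $\psi$-multiplication by the exact Jacobian. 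The remaining steps are routine.
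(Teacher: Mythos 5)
Your proposal is correct and follows essentially the same route as the paper, whose proof is the one-line observation that the classical order conditions (indexed by $T$-trees) are implied by the EPIRK-$K$ conditions (indexed by $TK$-trees $\supseteq T$-trees); your fibre-sum argument over the recoloring map, with the exact-solution coefficients vanishing on fat-containing trees, is the honest elaboration of that comparison, and the $M=N$ specialization is a harmless way of identifying the classical scheme with the $\A_n=\J_n$ instance. No gaps.
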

\begin{proof}
The proof follows directly from comparing the set of \textit{T}-trees to \textit{TK}-trees.
\end{proof}
\begin{corollary}\label{Cor:Classical-method-implemented-as-K}
Any classical EPIRK method of order $p \geq 3$ gives rise to an equivalent $K$-method of order at least three  (with the same coefficients).
\end{corollary}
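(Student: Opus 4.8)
The plan is to reduce the statement, just as in Corollary~\ref{Cor:K-method-implemented-as-classical}, to a comparison of the underlying tree sets, but now with careful attention to the order-three truncation. First I would recall that $T \subseteq TK$: a classical tree has only meagre nodes, so it vacuously satisfies the defining property of a $TK$-tree that no linear sub-tree carries a fat root. The real content is the sharper claim that \emph{up to order three the sets $T$ and $TK$ coincide}, and that on these low-order trees the $K$-order conditions are literally the classical EPIRK order conditions.

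To establish the coincidence of tree sets I would reason structurally about fat vertices. By the definition of a $TW$-tree every fat vertex is singly branched, so the sub-tree rooted at a fat vertex has a forced top edge; for that sub-tree to fail to be linear, the unique child of the fat vertex must itself branch, which already requires the fat vertex, its child, and two grandchildren, i.e.\ order at least four. Hence in every $TW$-tree of order $\le 3$ a fat vertex necessarily roots a linear sub-tree of order $\ge 2$, which is forbidden in a $TK$-tree. It follows that no $TK$-tree of order $\le 3$ contains a fat node, so the $TK$-trees through order three are exactly $\tau^K_1,\dots,\tau^K_4$, all of which are ordinary $T$-trees; the first genuinely fat $TK$-tree is $\tau^K_8$, and it appears only at order four.

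The main work is to check that, on these order-$\le 3$ trees, the $K$-order conditions agree with the classical ones. The coefficient assigned to a $TK$-tree in the numerical B-series is the sum of the $B^{\#}$-coefficients of all $TW$-trees that become isomorphic to it after recoloring, and each such per-tree coefficient is a polynomial in $a,b,g,p$ that is independent of the choice of $\mathbf{A}_n$ (the matrix enters only through the elementary differential $F(\tau)$, never through the scalar coefficient). For a $T$-tree of order $\le 3$ the recoloring equivalence class — obtained by recoloring every fat linear-sub-tree root to meagre — coincides with the class obtained by substituting $\mathbf{A}_n = \mathbf{J}_n$ in the $W$-expansion, precisely because every fat node of such a low-order tree sits on a linear sub-tree and a singly branched fat node with $\mathbf{A}_n=\mathbf{J}_n$ is indistinguishable from a meagre node representing $f'$. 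Thus the summed numerical coefficient on each order-$\le 3$ $TK$-tree equals the classical numerical coefficient on the corresponding $T$-tree, while on the exact-solution side both expansions assign $\gamma(\tau)$ by Theorem~\ref{Theorem:TW-trees-coefficients-of-exact-solution}. The two families of conditions through order three therefore reduce to the same polynomial equations in the method coefficients.

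Assembling these observations, a classical EPIRK method of order $p \ge 3$ satisfies the classical conditions through order three, hence the identical $K$-conditions through order three, and is therefore a $K$-method of order at least three with the same coefficients. I would close by explaining why one cannot promise more: already at order four the $K$-method carries the extra condition attached to the fat tree $\tau^K_8$, which has no counterpart in the classical theory (where $\mathbf{A}_n=\mathbf{J}_n$ would recolor that node away), so a classical method of order $\ge 4$ need not satisfy it. The step I expect to be the genuine obstacle is this middle one — verifying that for order $\le 3$ the recoloring equivalence classes coincide with the exact-Jacobian equivalence classes, so that the $K$- and classical conditions literally agree; the tree-counting in the first step is routine once the singly branched property of fat vertices is invoked.
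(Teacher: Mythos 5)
Your proposal is correct and follows the same route as the paper, whose proof is simply the one-line observation that the result ``follows directly from comparing the set of \textit{T}-trees to \textit{TK}-trees''; you supply the details that the paper leaves implicit, namely that the singly-branched property of fat vertices forces every fat node of a \textit{TW}-tree of order at most three to root a linear sub-tree, so the \textit{TK}-trees through order three are exactly the \textit{T}-trees and the corresponding summed order conditions coincide with the classical ones. Your closing remark that the obstruction to going beyond order three is the fat tree $\tau^K_8$ is also consistent with the paper's discussion in Sections \ref{sec:EPIRK-K-order} and \ref{sec:Results}.
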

\begin{proof}
The proof follows directly from comparing the set of \textit{T}-trees to \textit{TK}-trees.
\end{proof}

\subsection{Construction of practical EPIRK-$K$ integrators}

Note that order condition $\tau_{8}^{K}$ is the additional order condition that was mentioned earlier whose \textit{TW}-tree could not be re-colored according to \cite[Lemma 3.2, 3.3]{Tranquilli_2014_ROK}. This is the only tree that is not present in \textit{T}-trees up to order 4, and therefore we impose that the associated coefficient is equal to zero. To solve the order conditions we use \Mathematica and the solution procedure described for the first variant of EPIRK-$W$ method where we make terms of each order condition similar to one another by making suitable substitutions. We arrive at the EPIRK-$K$ method ({\sc epirkk4}) of order four in Figure \ref{fig:EPIRK-k-coefficients-1}, with an embedded method of order three.
\begin{figure}[tbh]
\caption{Coefficients for {\sc epirkk4}\label{fig:EPIRK-k-coefficients-1}}
\begin{equation*}
\renewcommand{\arraystretch}{1.5}
\begin{split}
a = \begin{bmatrix}
\frac{692665874901013}{799821658665135} & 0 & 0\\
\frac{692665874901013}{799821658665135} & \frac{3}{4} & 0
\end{bmatrix}, &\hspace{1cm}
\begin{bmatrix} b \\ \widehat{b} \end{bmatrix}  = \begin{bmatrix}
\frac{799821658665135}{692665874901013} & \frac{352}{729} & \frac{64}{729}\\
\frac{799821658665135}{692665874901013} & \frac{32}{81} & 0
\end{bmatrix},\\
g = \begin{bmatrix}
\frac{3}{4} & 0 & 0\\
\frac{3}{4} & 0 & 0\\
1 & \frac{9}{16} & \frac{9}{16}
\end{bmatrix},
&\hspace{1cm}
p = \begin{bmatrix}
\frac{692665874901013}{799821658665135} & 0 & 0\\
1 & 1 & 0\\
1 & 1 & 0
\end{bmatrix}.
\end{split}
\end{equation*}
\end{figure}

The theory of $K$-methods gives a lower bound on the Krylov-subspace size that guarantees the order of convergence \cite[Theorem 3.6]{Tranquilli_2014_ROK}. This bound dependents only on the order of convergence of the method and not on the dimension of the ODE system.

Here we have constructed a fourth order EPIRK-$K$ method in Figure \ref{fig:EPIRK-k-coefficients-1}, which requires a Krylov-subspace of dimension four \cite[Theorem 3.6]{Tranquilli_2014_ROK}. All expensive operations such as computing products of $\psi$ function of matrices with vectors are performed in this reduced space. Significant computational advantages are obtained if the Krylov-subspace captures all the stiff eigenmodes of the system. If not all stiff eigenmodes are captured, stability requirements will force the integrator to take smaller timesteps, which will increase the overall cost of integration. In such cases we typically observe that adding more vectors to the Krylov-subspace can improve the performance of the $K$-type integrator.

\section{Implementation strategies for exponential integrators\label{sec:Implementations}}

An important part of the implementation of exponential integrators is the computation of products of $\varphi$ or $\psi$ functions of matrices with vectors. The choice of approximation used depends on the properties of the matrix and the type of exponential method under consideration. In this section, we briefly review the strategies used to evaluate these products in the context of the methods discussed in this paper. Table \ref{Table:fixed-convergence-order} lists these methods and the implementation framework used in each case. It includes the $W$-methods presented in Section \ref{sec:EPIRK-W}, a fourth order $K$-method presented in Section \ref{sec:EPIRK-K}, and several classical methods that have been implemented to perform comparative studies. In the following discussion we will use the terms implementation and integrator interchangeably, and the term method should be understood as the formulation along with the set of coefficients.

We first discuss the computation of these products for classical exponential integrators listed in Table \ref{Table:fixed-convergence-order}. For {\sc epirkk4-classical} and {\sc epirk5} \cite[Table 4, Equation 28]{Tokman_2011_EPIRK} the evaluation of products of $\psi$ functions with vectors proceeds by first approximating the individual $\varphi$ function products in the Krylov-subspace as illustrated in \cite[sec 3.1]{Tokman_2006_EPI}, i.e.  $\varphi_k(h\,\gamma\, \mathbf{A}_{n})b \approx \norm{b} \mathbf{V}\varphi_k(h\,\gamma\, \mathbf{H})e_1$. The $\varphi$ function products associated with the approximation, $\varphi_k(h\,\gamma\,\mathbf{H})e_1$, are computed by constructing an augmented matrix and exponentiating it as described in \cite[Theorem 1]{Sidje_1998}. Finally taking a linear combination of columns of the resulting matrix gives the $\psi$ function product. In the augmented matrix approach, an Arnoldi iteration is needed for each new vector in the formulation given in equation \eqref{eqn:EPIRK-Formulation}. Each of the classical integrators listed above has three stages that work with three different vectors $b$, requiring three Arnoldi iterations per timestep.  {\sc epirk5p1bvar} \cite{Loffeld_Tranquilli_Tokman_2012}   and {\sc erow4} \cite{Hochbruck_2009_exp,Tokman_2010_Efficient}, which we include in our numerical experiments, also perform three Arnoldi projections per timestep. While {\sc erow4} uses the augmented matrix approach to compute the $\varphi$ function products, {\sc epirk5p1bvar} takes a completely different approach to evaluating the $\psi$ function products. It combines the Arnoldi process and the evaluation of $\psi$ function products by sub-stepping, to make it adaptive, into a single computation process as described in \cite{Niesen_2012_Alg919}.

Next we will discuss how we compute the $\psi$ function products in the context of $W$-methods as they are closely related to classical methods in the computation of these products.  Recall that $W$-methods admit arbitrary Jacobian approximations while maintaining full convergence order. In order to demonstrate this, we have alternate implementations that use different approximations to the Jacobian, namely, the Jacobian itself;  the identity matrix; the zero matrix, which reduces the {\sc epirkw} scheme \eqref{eqn:EPIRK-W-Formulation-3-Stage} to an explicit Runge-Kutta scheme; and lastly, the diagonal of the Jacobian. We will refer to these methods as {\sc epirkw3} in the paper and highlight the approximation in the context. Figure \ref{fig:fixed-convergence-plot} in the following section shows that these alternative implementations retain full order when we use an approximation to $\mathbf{J}_{n}$, as expected. It is to be noted, however, that in the case of $W$-methods, $\A_n=\J_n$ might be needed to assure stability, and some discussion in this regard when $\mathbf{A}_{n} = \mathbf{J}_{n}$ or when $\Vert \mathbf{A}_{n} - \mathbf{J}_{n} \Vert$ is small, is done in \cite[Sec IV.7, IV.11]{Hairer_book_II} and \cite{Steihaug_1979}.

The different implementations of the $W$-method vary in the way the $\psi$ function products are computed. Among the many possible combinations, we will restrict our discussion of the evaluation of these products to the following cases: 

\begin {enumerate}
\item $\mathbf{A}_{n} = \text{diag}(\mathbf{J}_{n})$.
We compute the individual $\varphi_k$ functions with successively higher subscripts using the recursive definition given in \eqref{eqn:phi_k_recursive_formulation_and_phi_k_0}, where $\varphi_0$ is computed as point-wise exponential of the entries along the diagonal. Next, we take the linear combination of the $\varphi$ functions to evaluate the $\psi$ function as defined in equation \eqref{eqn:simplified_psi_function_defn}. Finally, the product of $\psi$ function of matrix times vector is evaluated as a matrix-vector product. A similar procedure is adapted for other approximations where $\mathbf{A}_{n}$ is either zero or identity.

\item $\mathbf{A}_{n} = \mathbf{J}_{n}$. 
The evaluation of products of $\psi$ function of Jacobian with vectors is similar to the classical integrators. Three Arnoldi iterations are required for each new vector in the three stage formulation given in equation \eqref{eqn:EPIRK-W-Formulation-3-Stage}. An implementation involving a non-trivial approximation of the Jacobian may use a similar strategy to compute the $\psi$ function products.
\end {enumerate}

In addition to classical integrators and $W$-type integrators, we have implemented {\sc epirkk4}, a fourth-order $K$-type integrator, and {\sc epirk5-k} \cite[Table 4]{Tokman_2011_EPIRK}, a classical fifth-order method implemented in the $K$-framework. These integrators use the reduced space formulation for stage values as shown in Algorithm \ref{alg:EPIRK_algorithm}. The reduced space is constructed using a single Arnoldi iteration per timestep. \cite{Tranquilli_2014_ROK} proves that $K$-methods  require only as many basis vectors in the reduced space as the order of the method to guarantee same order of convergence. This is in direct contrast to other methods considered in this paper that perform Arnoldi iteration where the basis size is variable and is dependent on the residual being below a chosen tolerance. {\sc epirkk4}, being a fourth order $K$-method, theoretically requires only four vectors in the Krylov-subspace. Since the stage values of {\sc epirkk4} are computed in this reduced space, the size of the matrices used in the computation of $\psi$ function products is usually smaller than the size of the ODE system under consideration. As a result, in our implementation of {\sc epirkk4}, quantities $\varphi_k(h\,\gamma\,\mathbf{H})$ are directly computed using the recursive definition given in \eqref{eqn:phi_k_recursive_formulation_and_phi_k_0}, they are stored independently and linearly combined to compute the $\psi$ function. The product of a $\psi$ function of a matrix with a vector is computed as a matrix-vector product. Despite using a similar procedure with five basis vectors in the Krylov-subspace, {\sc epirk5-k} does not attain its theoretical order of convergence; the reason for this will become clear in the following section.

\section{Numerical results}
\label{sec:Results}

The integrators discussed in Section \ref{sec:Implementations} were evaluated by running fixed-step convergence experiments on the Lorenz-96 system and variable time-stepping experiments on Lorenz-96, Shallow Water Equations, and Allen-Cahn system. In order to run variable time-stepping experiments, we used the \Matlode framework \cite{Sandu_2015_MATLODE}, a \Matlab -based ODE solver suite developed by the co-authors.

\subsection{Lorenz-96 system\label{sec:L96_section}}

Lorenz-96 model \cite{Lorenz_1996_predictability} is described by the system of ODEs:
\begin{equation}
\label{eqn:Lorenz}
\frac{dy_j}{dt} = -y_{j-1}(y_{j-2}-y_{j+1})-y_j+F,\quad j = 1\dots N, \quad
y_{1} = y_{N}.
\end{equation}
Here $N$ denotes the number of states and $F$ is the forcing term. For our experiments we let $N=40$, and $F=8$. The initial condition for the model was obtained by integrating over $[0, 0.3]$  time units using \texttt{MATLAB}'s \texttt{ODE45} integrator. Note also that the boundary condition is periodic. 

\begin{figure}[htb]
  \centering
  \includegraphics[scale=0.25]{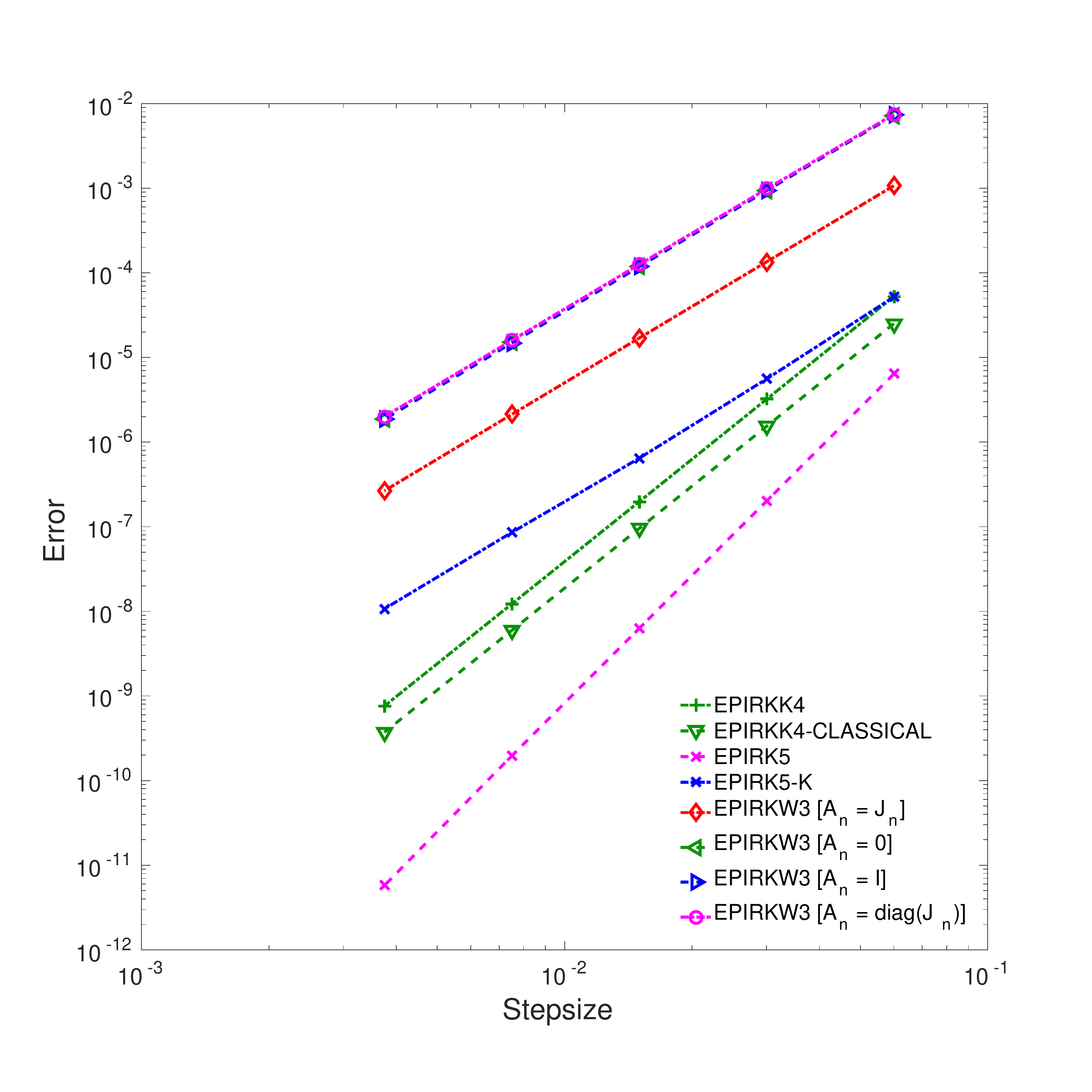}
  \caption{Convergence plot for different methods applied to Lorenz-96 model \eqref{eqn:Lorenz}.}
    \label{fig:fixed-convergence-plot}
\end{figure}

The Lorenz-96 system was used to perform fixed-step convergence experiments on a subset of integrators listed in Table \ref{Table:fixed-convergence-order}. The reference solution was computed using \texttt{ODE45} with absolute and relative tolerances set to \num{1e-12}. 
The convergence plots for fixed-step experiments are shown in Figure \ref{fig:fixed-convergence-plot}. The results   for each implementation and coefficient combination are summarized in Table \ref{Table:fixed-convergence-order}.
These results support the following conclusions:
\begin{enumerate}[parsep=1pt,listparindent=\parindent]
\item The numerical experiments verify the order of the methods derived in the paper.  Both {\sc epirkk4} and {\sc epirkw3} show the theoretical order of convergence.
\item The theory of $W$-methods is validated through testing different approximations to the Jacobian. Indeed,
{\sc epirkw3} consistently shows third order convergence irrespective of the approximation used for the Jacobian.
\item The results demonstrate that every $K$-method can be implemented as a classical method of the same order.
Here we test {\sc epirkk4-classical}, the implementation of the method {\sc epirkk4} in the classical framework as discussed in Section \ref{sec:Implementations}. It has the same order of convergence as {\sc epirkk4} since the coefficients derived for {\sc epirkk4} method satisfies all the order conditions of a fourth order classical method. This is particularly true for every $K$-method.
\item In general, classical methods implemented as a $K$-methods do not preserve the order and may suffer from order reduction.  
Here {\sc epirk5} and {\sc epirk5-k} were implemented for this specific purpose where {\sc epirk5-k} is a $K$-type implementation of the fifth order classical method {\sc epirk5} derived in \cite{Tokman_2011_EPIRK}. Clearly, from Table \ref{Table:fixed-convergence-order}, we see that the $K$-type implementation shows order reduction as the coefficient in front of the elementary differential corresponding to $\tau^{K}_{8}$ is non-zero in the $K$-type method. 
In general, solving the order conditions for a $K$-type method is more restrictive than solving for a classical method due to the additional order conditions that arise from approximating the Jacobian in the Krylov-subspace. As a result, not all classical methods lead to $K$-type integrators of the same order. 
\end{enumerate}
The last two observations are in line with Corollaries \ref{Cor:K-method-implemented-as-classical} and \ref{Cor:Classical-method-implemented-as-K}. 

\begin{table}[htb]
\centering
\begin{tabular}{|>{\arraybackslash}m{4.5cm}|>{\centering\arraybackslash}m{2.7cm}|>{\centering\arraybackslash}m{2.2cm}|>{\centering\arraybackslash}m{1.3cm}|>{\centering\arraybackslash}m{2.25cm}|}
\hline
\textbf{Integrator} & \textbf{Implementation Framework} & \textbf{Coefficients} & \textbf{Derived Order} &\textbf{Fixed Step Convergence Order} \\\hline
{\sc epirkk4}                                    & $K$-Type & Figure \ref{fig:EPIRK-k-coefficients-1}        & 4 & 4.018722  \\ \hline
{\sc epirkk4-classical}                          & Classical       & Figure \ref{fig:EPIRK-k-coefficients-1}        & 4 & 4.009777  \\ \hline
${\ddag}$ {\sc epirk5}                                              & Classical       & \cite[~Table 4]{Tokman_2011_EPIRK}             & 5 & 5.016092  \\ \hline
${\ddag}$ {\sc epirk5-k}                                            & $K$-Type & \cite[~Table 4]{Tokman_2011_EPIRK}             & 5 & 3.051511  \\ \hline
{\sc epirkw3} $[\A_n = \J_n]$                    & $W$-Type & Figure \ref{fig:epirk-w-coefficients-2}        & 3 & 2.994241  \\ \hline
{\sc epirkw3} $[\A_n = \textnormal{diag}(\J_n)]$ & $W$-Type & Figure \ref{fig:epirk-w-coefficients-2}        & 3 & 2.967430  \\ \hline
${\ddag}$ {\sc epirkw3} $[\A_n = \I]$                               & $W$-Type & Figure \ref{fig:epirk-w-coefficients-2}        & 3 & 2.987911  \\ \hline
${\ddag}$ {\sc epirkw3} $[\A_n = \mathbf{0}]$                       & $W$-Type & Figure \ref{fig:epirk-w-coefficients-2}        & 3 & 2.977000  \\ \hline
{\sc erow4}                                     & Classical       & \cite[Section 5]{Hochbruck_2009_exp}           & 4 & --        \\ \hline
{\sc epirk5p1bvar}                               & Classical       & \cite[Table 3]{Loffeld_Tranquilli_Tokman_2012} & 5 & --        \\ \hline
\end{tabular}
\caption{Comparative study of various exponential integrators. The convergence orders shown are obtained from fixed step size experiments with the Lorenz-96 model \eqref{eqn:Lorenz}. Integrators implemented only for studying fixed step convergence behavior are excluded from variable timestep experiments and these are prefixed with $\ddag$ in column 1.} 
\label{Table:fixed-convergence-order}
\end{table}

\begin{figure}[!htb]
    \centering
    \begin{subfigure}{.45\textwidth}
        \centering
	    \includegraphics[scale=0.25]{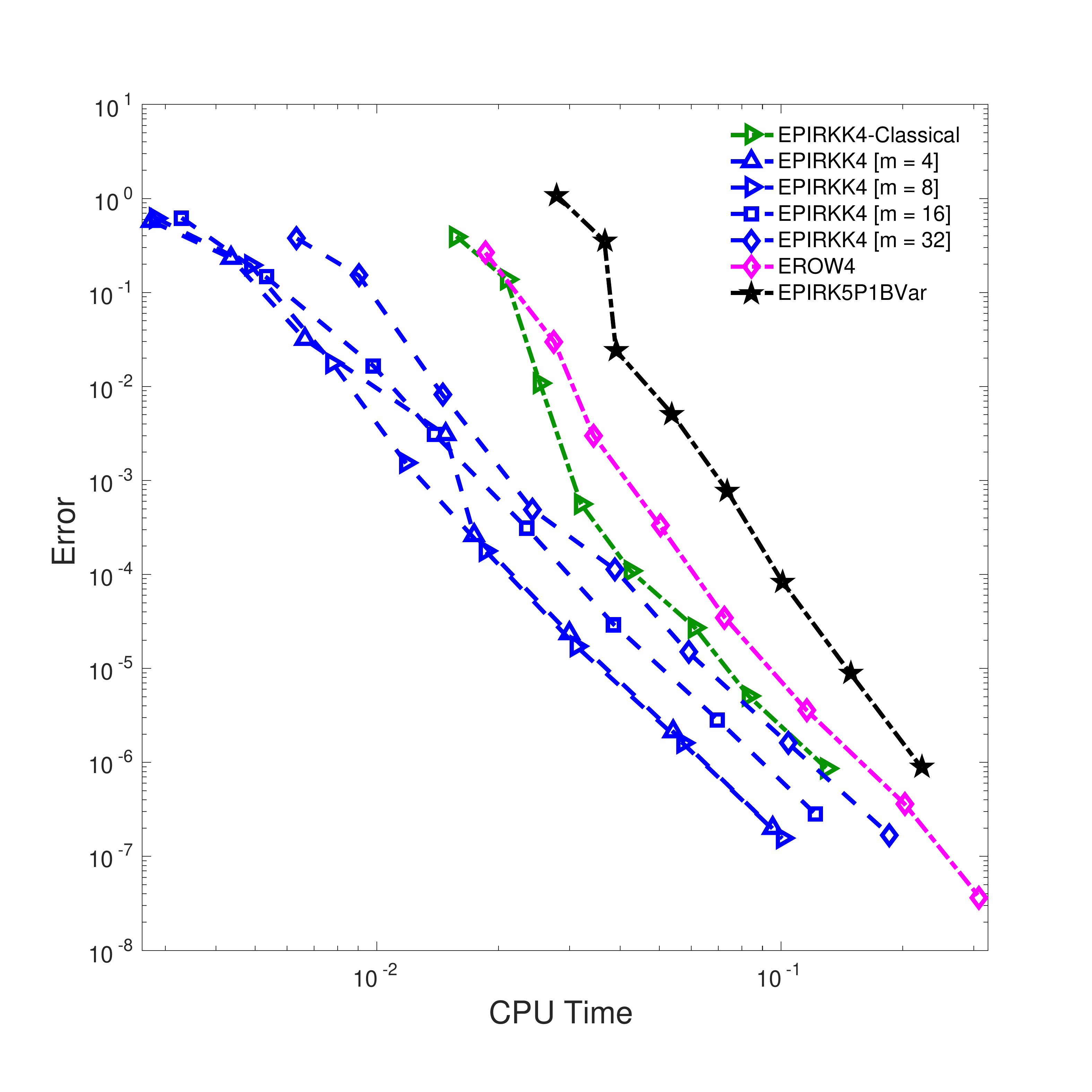}
   	    \caption{K-type versus classical methods}
		\label{fig:L96-work-prec-diag-kc}
    \end{subfigure}%
    \begin{subfigure}{0.45\textwidth}
        \centering
	    \includegraphics[scale=0.25]{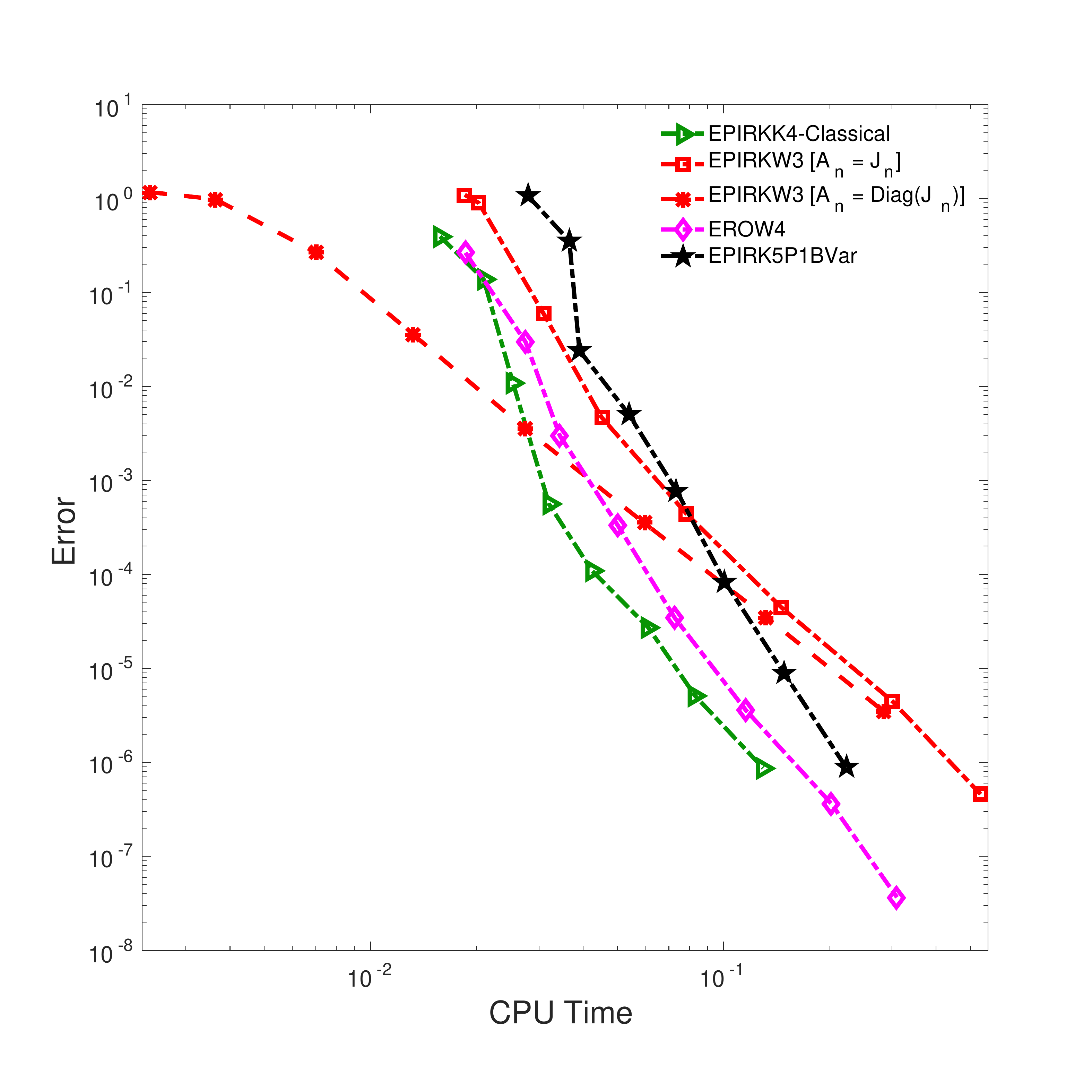}
   	    \caption{W-type versus classical methods}
		\label{fig:L96-work-prec-diag-wc}
    \end{subfigure}%
    \caption{Work-precision diagrams for different methods  applied to Lorenz-96 problem \eqref{eqn:Lorenz}.}
\end{figure}

\begin{figure}[!htb]
    \centering
    \begin{subfigure}{.45\textwidth}
        \centering
	    \includegraphics[scale=0.25]{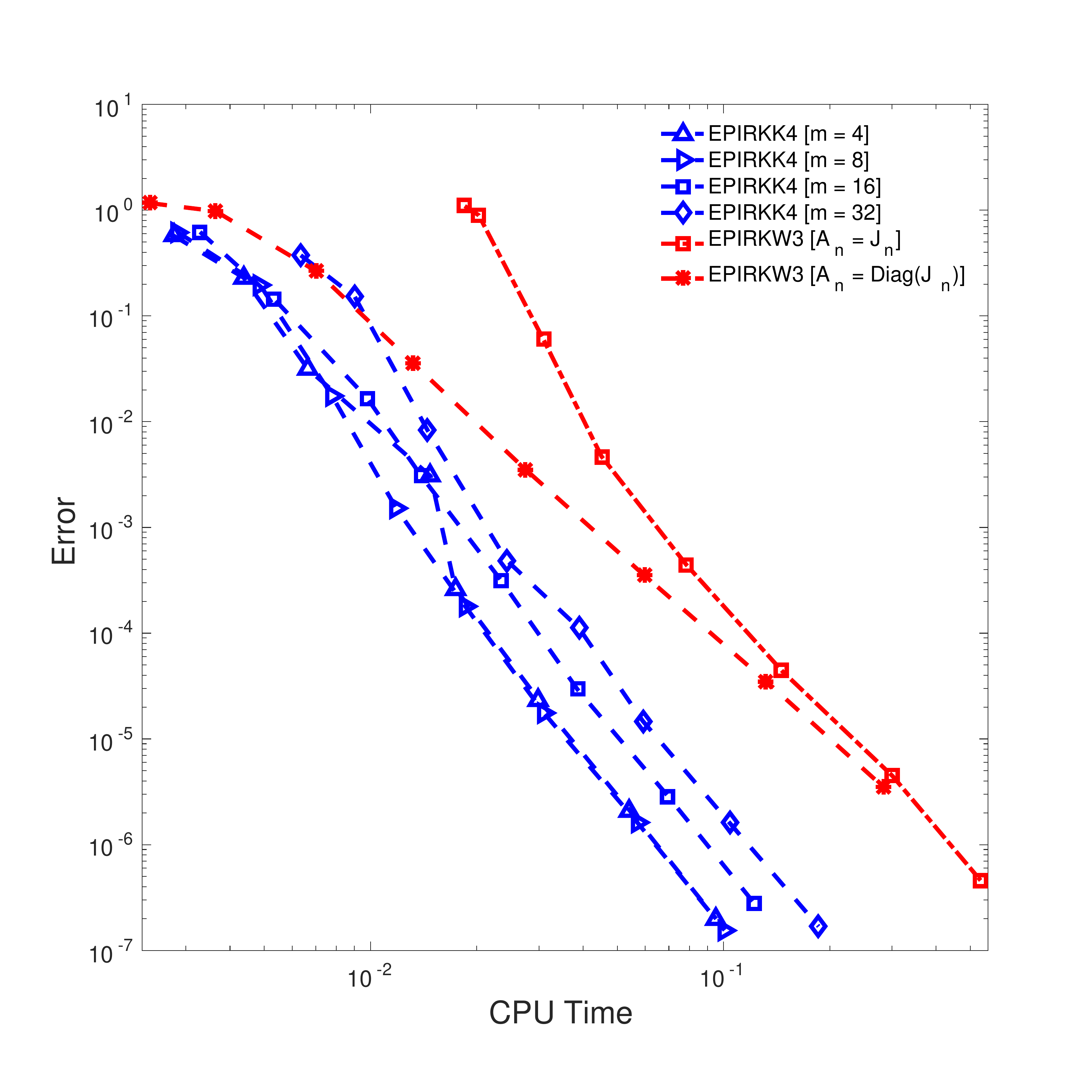}
   	    \caption{W-type versus K-type methods}
		\label{fig:L96-work-prec-diag-wk}
    \end{subfigure}%
    \begin{subfigure}{0.45\textwidth}
        \centering
		\includegraphics[scale=0.25]{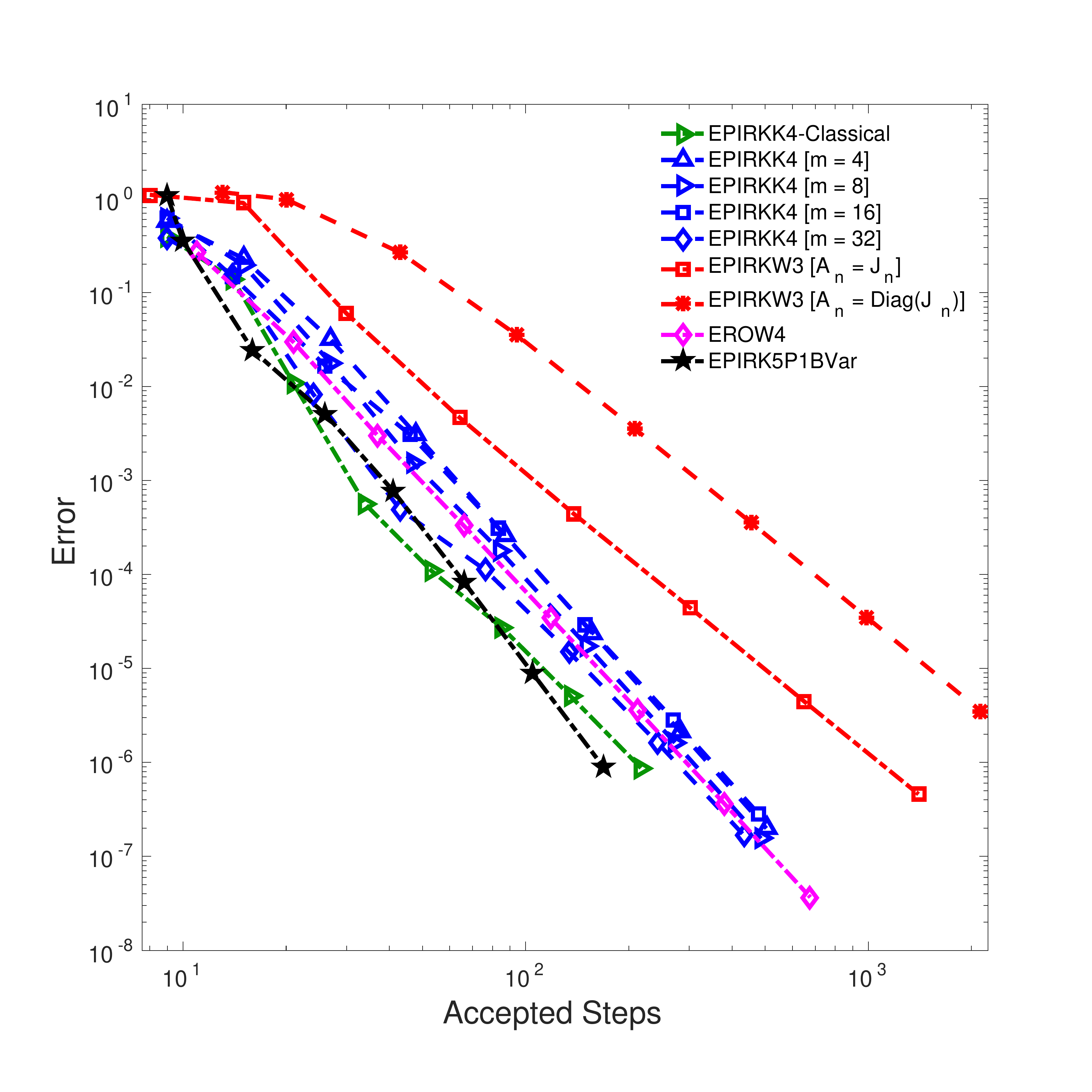}
		\caption{Convergence diagram}
		\label{fig:L96-error-vs-steps}
    \end{subfigure}%
    \caption{Work-precision and convergence diagrams for different methods applied to Lorenz-96 problem\eqref{eqn:Lorenz}.}
\end{figure}

To evaluate the relative computational efficiency of the integrators -- {\sc epirkk4}, {\sc epirkw3}, and {\sc epirkk4-classical}  -- we perform variable time-stepping experiments with the Lorenz-96 system, the Shallow Water Equations, and the Allen-Cahn system. We also include known exponential schemes such as {\sc epirk5p1bvar} \cite{Loffeld_Tranquilli_Tokman_2012} and {\sc erow4} \cite{Hochbruck_2009_exp,Tokman_2010_Efficient} as  benchmark for the performance of the methods derived in this paper. Experiments are run for different relative tolerances $[\num{1e-1}, \num{1e-2}, \hdots, \num{1e-8}]$; the error controller \cite[Section II.4]{Hairer_book_I}, which is the same across all methods, adjusts the stepsize to ensure that the solution meets the tolerance requirement. The relative tolerance for the Arnoldi algorithm was set to \num{1e-12} for {\sc epirkk4}, {\sc epirkk4-classical} and {\sc erow4}; \num{1e-9} for {\sc epirkw3} ($\A_n = \J_n$) and to experimental tolerance for {\sc epirk5p1bvar}. As in the case of fixed-step convergence experiments, the reference solution is computed using \texttt{ODE45} with absolute and relative tolerances equal to \num{1e-12}.
 
To aid the interpretation of the results, we organize the performance diagrams by subgroups of integrators.  
The work-precision diagrams for the Lorenz-96 system integrated over the time interval $[0, 1.8]$ [units], are shown in Figures \ref{fig:L96-work-prec-diag-kc}, \ref{fig:L96-work-prec-diag-wc}, and \ref{fig:L96-work-prec-diag-wk}. Figure \ref{fig:L96-error-vs-steps} shows the convergence diagram for varying solution tolerances and plots the global error with  respect to reference solution against the number of accepted steps. Table \ref{Table:l96-kdim} reports the (root mean square) number of Krylov vectors used by each integrator per projection at each tolerance level. Table \ref{Table:l96-rejn} gives the number of rejected time steps for each integrator at each tolerance level. Lastly, Table \ref{Table:l96-accuracy-time} interprets the global error at the end of the timespan and the timing results for each tolerance setting as the best CPU time in which certain level of accuracy is achieved by each integrator.

\begin{table}[]
\centering
\begin{tabular}{|>{\arraybackslash}m{4.2cm}|>{\centering\arraybackslash}m{0.75cm}|>{\centering\arraybackslash}m{0.75cm}|>{\centering\arraybackslash}m{0.75cm}|>{\centering\arraybackslash}m{0.75cm}|>{\centering\arraybackslash}m{0.75cm}|>{\centering\arraybackslash}m{0.75cm}|>{\centering\arraybackslash}m{0.75cm}|>{\centering\arraybackslash}m{0.75cm}|}
\hline
\backslashbox{\textbf{Integrator}}{\textbf{Tolerance}}  & $10^{-1}$ & $10^{-2}$ & $10^{-3}$ & $10^{-4}$ & $10^{-5}$ & $10^{-6}$ & $10^{-7}$ & $10^{-8}$ \\ \hline
{\sc epirkk4-classical}                          &  18  &  16  &  13  &  12  &  10  &   9  &   7  &   7    \\ \hline
{\sc epirkk4} [M = 4]                            &   4  &   4  &   4  &   4  &   4  &   4  &   4  &   4    \\ \hline
{\sc epirkk4} [M = 8]                            &   8  &   8  &   8  &   8  &   8  &   8  &   8  &   8    \\ \hline
{\sc epirkk4} [M = 16]                           &  16  &  16  &  16  &  16  &  16  &  16  &   1  &  16    \\ \hline
{\sc epirkk4} [M = 32]                           &  32  &  32  &  32  &  32  &  32  &  32  &  32  &  32    \\ \hline
{\sc epirkw3} [$\A_n = \J_n$]                    &  20  &  14  &  10  &   7  &   6  &   5  &   4  &   3    \\ \hline
{\sc epirkw3} [$\A_n = \textnormal{diag}(\J_n)$] &   0  &   0  &   0  &   0  &   0  &   0  &   0  &   0    \\ \hline
{\sc erow4}                                      &  20  &  15  &  12  &  10  &   8  &   7  &   7  &   5    \\ \hline
{\sc epirk5p1bvar}                               &  13  &   8  &   6  &   5  &   5  &   4  &   4  &   4    \\ \hline
\end{tabular}
\caption{Root mean square number of Krylov vectors per projection for each integrator applied applied to Lorenz-96 problem \eqref{eqn:Lorenz}. {\sc epirkk4} uses a fixed number of basis vectors in the Arnoldi process (indicated by the value $m$ in brackets). {\sc epirkw3} with $\A_n = \textnormal{diag}(\J_n)$ computes the $\psi$ function product  directly, without the need to perform Arnoldi. Both {\sc epirkw3} with $\A_n = \J_n$ and the classical methods use an adaptive Arnoldi process to approximate the $\psi$ function product.}
\label{Table:l96-kdim}
\end{table}

\begin{table}[]
\centering
\begin{tabular}{|>{\arraybackslash}m{4.2cm}|>{\centering\arraybackslash}m{0.75cm}|>{\centering\arraybackslash}m{0.75cm}|>{\centering\arraybackslash}m{0.75cm}|>{\centering\arraybackslash}m{0.75cm}|>{\centering\arraybackslash}m{0.75cm}|>
{\centering\arraybackslash}m{0.75cm}|>{\centering\arraybackslash}m{0.75cm}|>{\centering\arraybackslash}m{0.75cm}|}
\hline
\backslashbox{\textbf{Integrator}}{\textbf{Tolerance}}  & $10^{-1}$ & $10^{-2}$ & $10^{-3}$ & $10^{-4}$ & $10^{-5}$ & $10^{-6}$ & $10^{-7}$ & $10^{-8}$ \\ \hline
{\sc epirkk4-classical}                          &   5  &   6  &   7  &   5  &   4  &   1  &   0  &   0   \\ \hline         
{\sc epirkk4} [M = 4]                            &   5  &   7  &   6  &   5  &   3  &   1  &   0  &   0   \\ \hline
{\sc epirkk4} [M = 8]                            &   4  &   7  &   9  &   8  &   2  &   0  &   0  &   0   \\ \hline
{\sc epirkk4} [M = 16]                           &   3  &   5  &   9  &   6  &   5  &   0  &   0  &   0   \\ \hline
{\sc epirkk4} [M = 32]                           &   5  &   5  &   8  &  11  &  12  &   3  &   0  &   0   \\ \hline
{\sc epirkw3} [$\A_n = \J_n$]                    &   6  &   6  &   9  &   7  &   0  &   0  &   0  &   0   \\ \hline
{\sc epirkw3} [$\A_n = \textnormal{diag}(\J_n)$] &   5  &   6  &   9  &   2  &   0  &   0  &   0  &   0   \\ \hline
{\sc erow4}                                      &   4  &   7  &   6  &   6  &   1  &   0  &   0  &   0   \\ \hline
{\sc epirk5p1bvar}                               &   5  &   4  &   5  &   7  &   4  &   4  &   1  &   0   \\ \hline 
\end{tabular}
\caption{Total number of rejected timesteps for each integrator  applied to Lorenz-96 problem \eqref{eqn:Lorenz}. {\sc epirkk4} [M = 32] attempts to take large steps in the high tolerance region, but the stepsize controller rejects them and forces it to take about the same number of steps as other methods.}
\label{Table:l96-rejn}
\end{table}

\begin{table}[]
\centering
\small
\begin{tabular}{|>{\arraybackslash}m{3.8cm}|>{\centering\arraybackslash}m{1cm}|>{\centering\arraybackslash}m{1cm}|>{\centering\arraybackslash}m{1cm}|>{\centering\arraybackslash}m{1cm}|>{\centering\arraybackslash}m{1cm}|>{\centering\arraybackslash}m{1cm}|>{\centering\arraybackslash}m{1cm}|}
\hline
\backslashbox{\textbf{Integrator}}{\textbf{Accuracy}}  & $10^{-1}$ & $10^{-2}$ & $10^{-3}$ & $10^{-4}$ & $10^{-5}$ & $10^{-6}$ & $10^{-7}$ \\ \hline
{\sc epirkk4-classical}                              &   .025   &   ---      &     .032  &      .061  &      .083  &      .129  &    ---      \\ \hline
{\sc epirkk4} [M = 4]                                &   .007   &     .015   &     .017  &      .03   &      .054  &      .095  &    ---      \\ \hline
{\sc epirkk4} [M = 8]                                &   .008   &     .012   &     .019  &      .031  &      .057  &      .101  &    ---      \\ \hline
{\sc epirkk4} [M = 16]                               &   .01    &     .014   &     .023  &      .039  &      .07   &      .122  &    ---      \\ \hline
{\sc epirkk4} [M = 32]                               &    ---   &     .015   &     .024  &      .059  &      .104  &      .186  &    ---      \\ \hline
{\sc epirkw3} [$\A_n = \J_n$]                        &   .031   &     .045   &     .078  &      .146  &      .301  &      .535  &    ---      \\ \hline
{\sc epirkw3} [$\A_n = \textnormal{diag}(\J_n)$]     &   .013   &     .027   &     .06   &      .132  &      .283  &    ---     &    ---      \\ \hline
{\sc erow4}                                          &   .027   &     .034   &     .05   &      .073  &      .116  &      .202  &      .31    \\ \hline
{\sc epirk5p1bvar}                                   &   .039   &     .054   &     .074  &      .101  &      .149  &      .223  &    ---      \\ \hline
\end{tabular}
\caption{CPU time in which accuracy is achieved by different integrators applied to Lorenz-96 problem \eqref{eqn:Lorenz}.\label{Table:l96-accuracy-time}}
\end{table}

Figures \ref{fig:L96-work-prec-diag-kc} and \ref{fig:L96-work-prec-diag-wk} indicate that  {\sc epirkk4} is more efficient than both {\sc epirkw3} ($\A_n = \J_n$) and the classical integrators on the Lorenz-96 system \eqref{eqn:Lorenz}. This is because {\sc epirkk4} integrators use a single Krylov projection per timestep with a fixed number of basis vectors, not needing to compute any residuals along the way. On the other hand, each of the classical integrators and {\sc epirkw3} ($\A_n = \J_n$) need three Krylov projections per timestep and compute residuals to make Arnoldi adaptive. {\sc epirkk4-classical},  {\sc erow4} and {\sc epirkw3} ($\A_n = \J_n$)  build larger bases for high tolerance values, as can be seen from results reported in Table \ref{Table:l96-kdim}.

Between different fixed subspace sizes, {\sc epirkk4} with the basis size of four ($M = 4$) or eight ($M = 8$) performs better than those fixed at sixteen and thirty-two. The increased cost of performing Arnoldi for larger basis sizes does not give a proportionate advantage in terms of stability as all the {\sc epirkk4} runs end up taking approximately the same number of timesteps as indicated in Figure \ref{fig:L96-error-vs-steps}. However, runs with larger basis size do more work per timestep. 

The difference in performance between the classical integrators can be attributed to the way the computation of $\psi$ function products is carried out. {\sc epirkk4-classical} and {\sc erow4} use the augmented matrix approach described in Section \ref{sec:Implementations} and only compute the residual at specific indices, where the cost of computing the residual equals the total cost of computing the basis vectors up to that point \cite[Section 6.4]{Hochbruck_1998_exp}. On the other hand, {\sc epirk5p1bvar} computes the $\psi$ function product according to \cite{Niesen_2012_Alg919}, which involves sub-stepping and other complex arithmetic operations that do not pay off in the case of Lorenz-96 system.

As seen in Figure \ref{fig:L96-work-prec-diag-wc},  {\sc epirkw3} ($\A_n = \J_n$) behaves like a classical exponential method. The implementation uses three Krylov projections per timestep and does mathematical operations involving matrices of approximately the same size as classical exponential methods, except it only has third order convergence. As a consequence, its performance is similar to that of classical exponential methods in the low-to-medium accuracy regime. In the medium-to-high accuracy regime the performance is less competitive, and Figure \ref{fig:L96-error-vs-steps} shows that this is due to the many more steps taken by the integrator {\sc epirkw3} ($\A_n = \J_n$), as enforced by the stepsize controller. 

In contrast, {\sc epirkw3} ($\A_n = \textnormal{diag}(\J_n)$) does not need any Arnoldi iterations. Moreover, computations do not involve operations on large matrices. This makes {\sc epirkw3} ($\A_n = \textnormal{diag}(\J_n)$) very inexpensive per timestep. Even though the method appears to be efficient for low accuracy solutions, relative error in the region is $\mathcal{O}(1)$ as is for all other methods. For medium-to-high accuracy solutions, the method takes significantly more number of timesteps, as is shown in Figure \ref{fig:L96-error-vs-steps}, explaining the tilt towards higher CPU time. In the case of {\sc epirkw3} ($\A_n = \textnormal{diag}(\J_n)$), we note that the larger number of timesteps for high accuracy solutions is due to a combination of lower order of the $W$-method and insufficient stability as the approximation to the exact Jacobian is not fully representative of its structure. Good Jacobian approximations for the problem at hand are important in order to take full advantage of the capabilities of EPIRK-$W$ methods.

\subsection{Shallow water equations on the sphere}

The Shallow Water Equations (SWE) on the sphere \cite{Neta_1997_Analysis,Daescu2004,rao2015posteriori}  are given by:
\begin{eqnarray}
\frac{\partial u}{\partial t} + \frac{1}{a \,\cos \theta} \left[ u \frac{\partial u}{\partial \lambda} + v \,\cos \theta\, \frac{\partial u}{\partial \theta} \right] -  \left(f + \frac{u}{a}\, \tan \theta \right) v + \frac{g}{a \,\cos \theta} \frac{\partial h}{\partial \lambda} &=& 0,\nonumber \\
\label{eqn:SWE}
\frac{\partial v}{\partial t} + \frac{1}{a \,\cos \theta} \left[ u \frac{\partial u}{\partial \lambda} + v \,\cos \theta\, \frac{\partial v}{\partial \theta} \right] +  \left(f + \frac{u}{a} \,\tan \theta \right) u + \frac{g}{a} \frac{\partial h}{\partial \theta} &=& 0, \\
\frac{\partial h}{\partial t} + \frac{1}{a \,\cos \theta} \left[  \frac{\partial (hu)}{\partial \lambda} +  \frac{\partial (hv \,\cos \theta)}{\partial \theta} \right]  &=& 0, \nonumber
\end{eqnarray}
where $f$ is the Coriolis parameter given by $f = 2 \Omega \sin \theta$, with $\Omega$ being the angular speed of the rotation of the earth, $h$ is the height of the atmosphere, $u$ and $v$ are zonal and meridional wind components, $a$ is the radius of the earth and $g$ is the gravitational constant, $\theta$ and $\lambda$ are latitudinal and longitudinal directions respectively. Initial conditions for the shallow water equations are same as those that appear in \cite[Section 6]{Neta_1997_Analysis}. The model was discretized in space with 72 gridpoints along each latitude and 36 gridpoints along each longitude. The system has three variables, $u$, $v$ and $h$, that need to be computed at each grid point making the total dimension of the semi-discrete system $N=7776$. 

We ran variable time-stepping experiments on the shallow water equations model over the integration window $[0, 28800]$ time units, which roughly corresponds to one day of real time. The work precision diagrams appear in Figures \ref{fig:SWE-work-prec-diag-kc}, \ref{fig:SWE-work-prec-diag-wc}, and \ref{fig:SWE-work-prec-diag-wk}. Figure \ref{fig:SWE-error-vs-steps} is the convergence diagram for varying solution tolerances and plots the global error with  respect to reference solution against the number of accepted steps. Table \ref{Table:swe-kdim} details the (root mean square) number of Krylov vectors per projection used by each integrator at each tolerance level. Table \ref{Table:swe-rejn} specifies the number of rejected timesteps for each integrator at each tolerance level. 

\begin{figure}[!htb]
    \centering
    \begin{subfigure}{.45\textwidth}
        \centering
	    \includegraphics[scale=0.25]{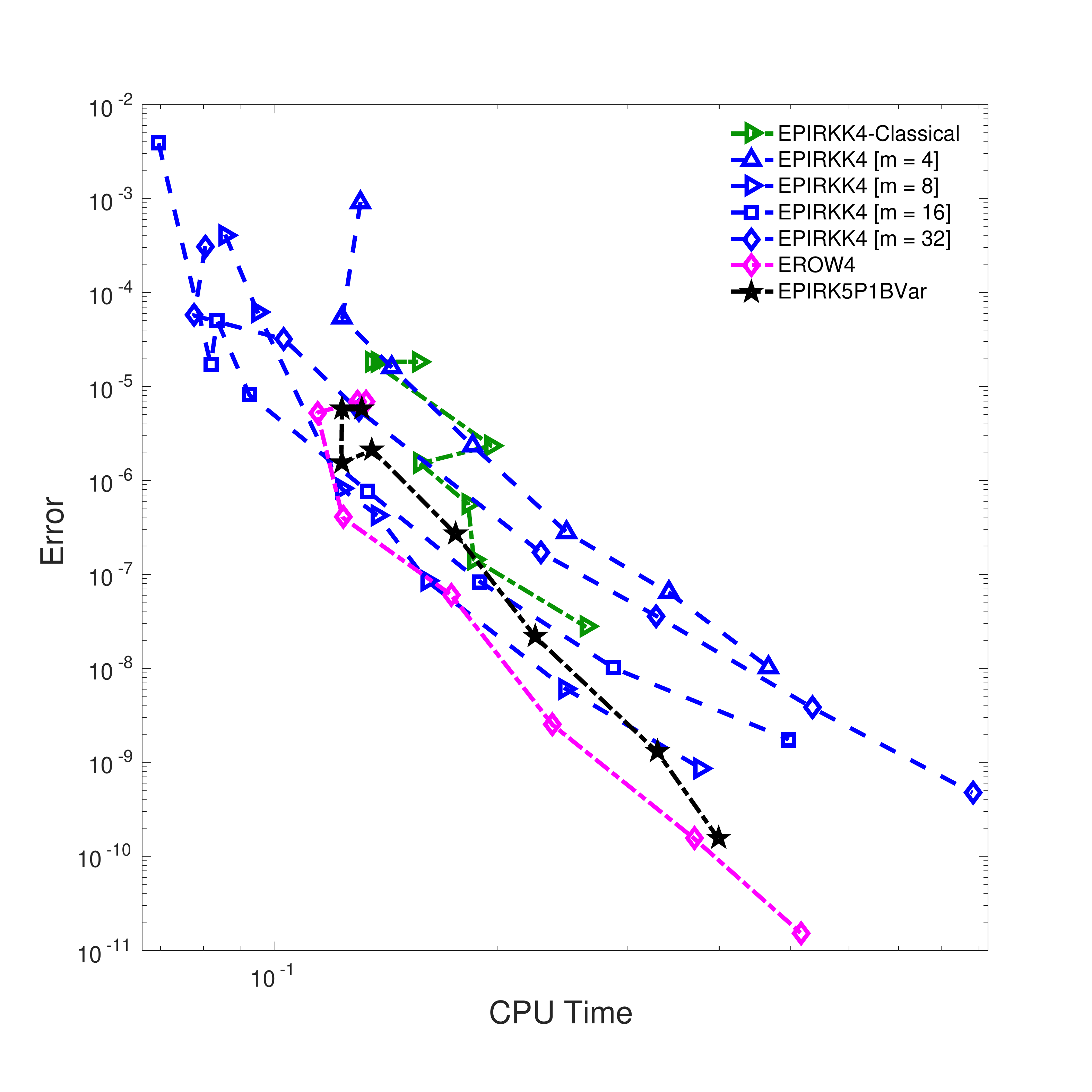}
   	    \caption{K-type versus classical methods}
		\label{fig:SWE-work-prec-diag-kc}
    \end{subfigure}%
    \begin{subfigure}{0.45\textwidth}
        \centering
	    \includegraphics[scale=0.25]{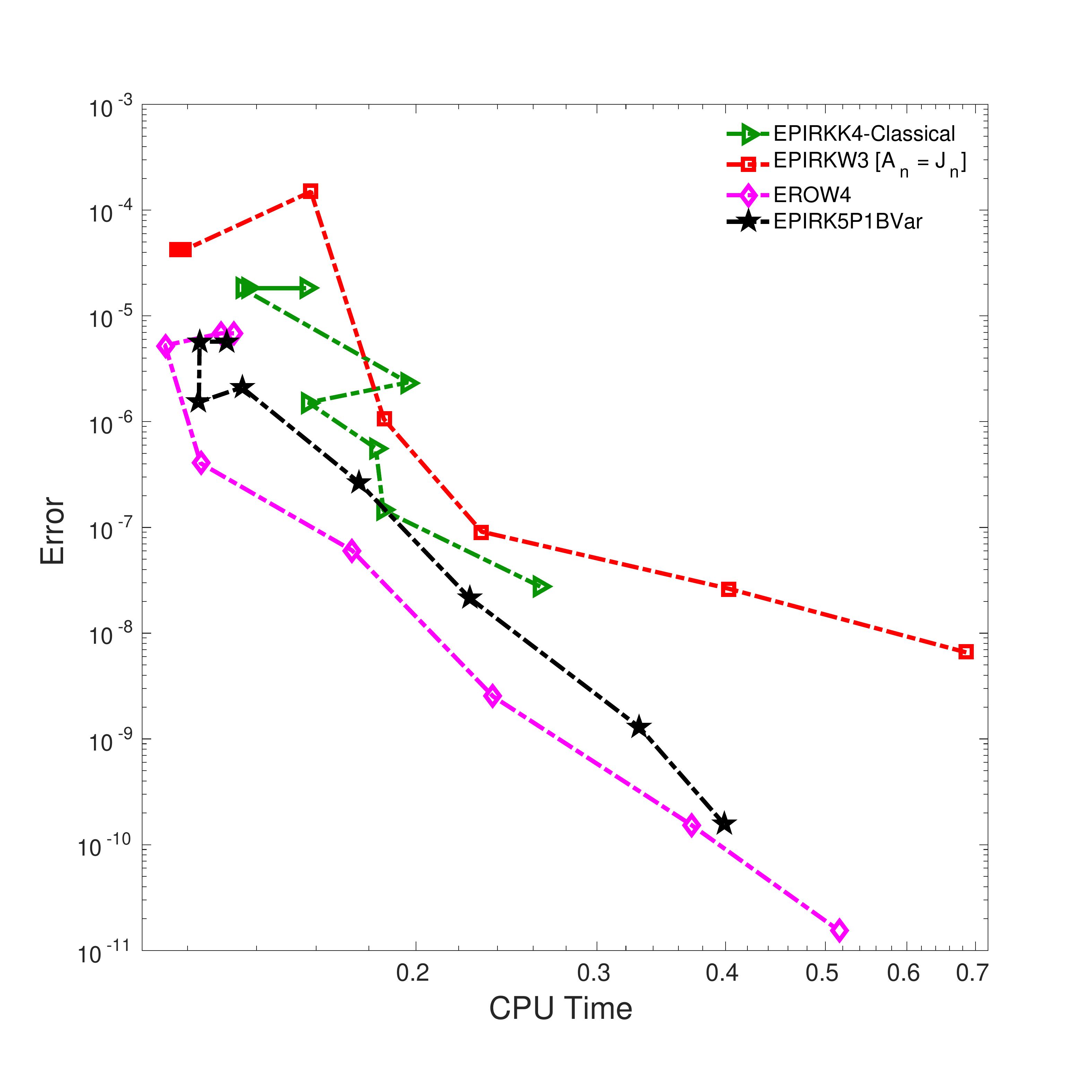}
   	    \caption{W-type versus classical methods}
		\label{fig:SWE-work-prec-diag-wc}
    \end{subfigure}%
    \caption{Work-precision diagrams for different methods applied to Shallow Water Equations problem \eqref{eqn:SWE}.}
\end{figure}

\begin{figure}[!htb]
    \centering
    \begin{subfigure}{.45\textwidth}
        \centering
	    \includegraphics[scale=0.25]{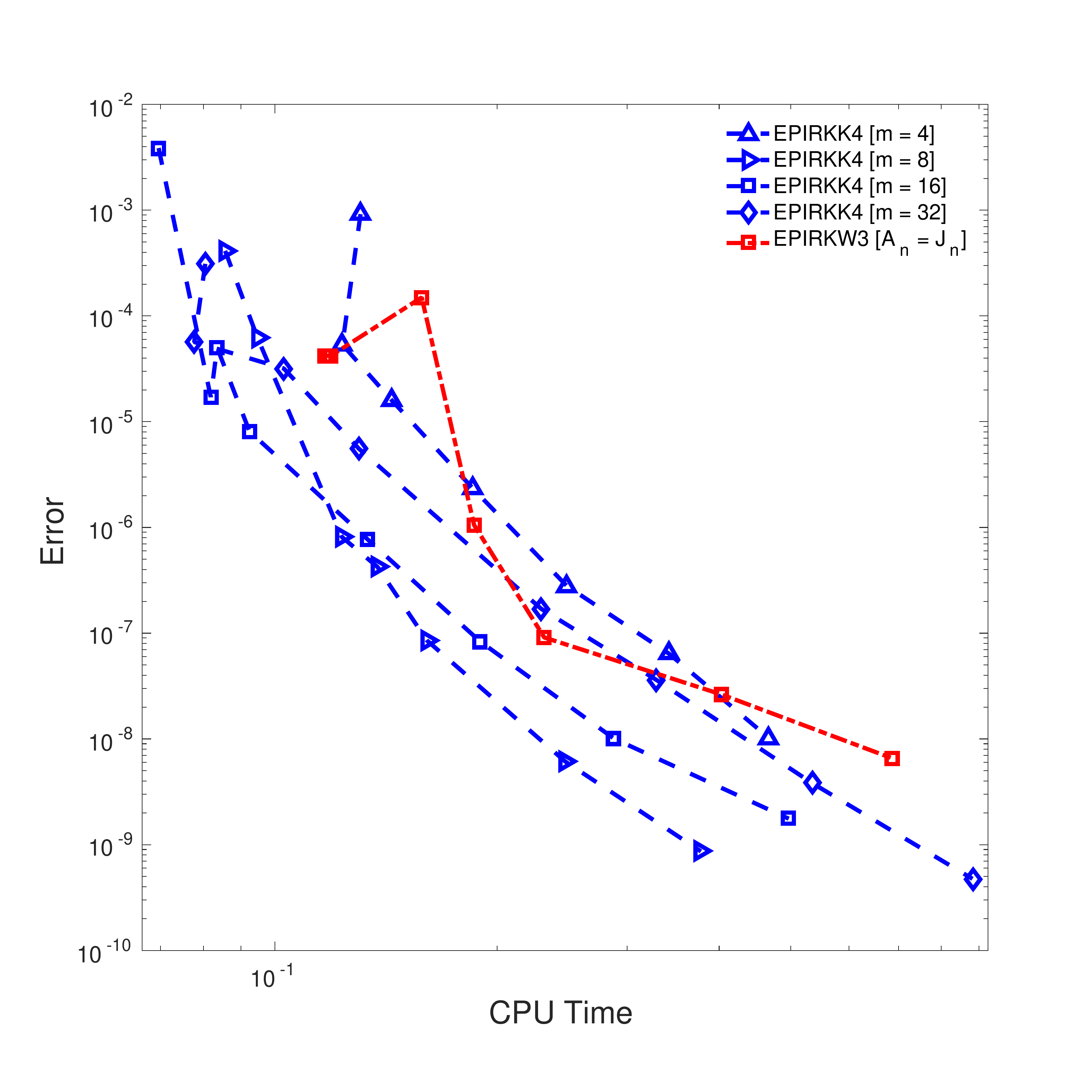}
   	    \caption{W-type versus K-type methods}
		\label{fig:SWE-work-prec-diag-wk}
    \end{subfigure}%
    \begin{subfigure}{0.45\textwidth}
        \centering
		\includegraphics[scale=0.25]{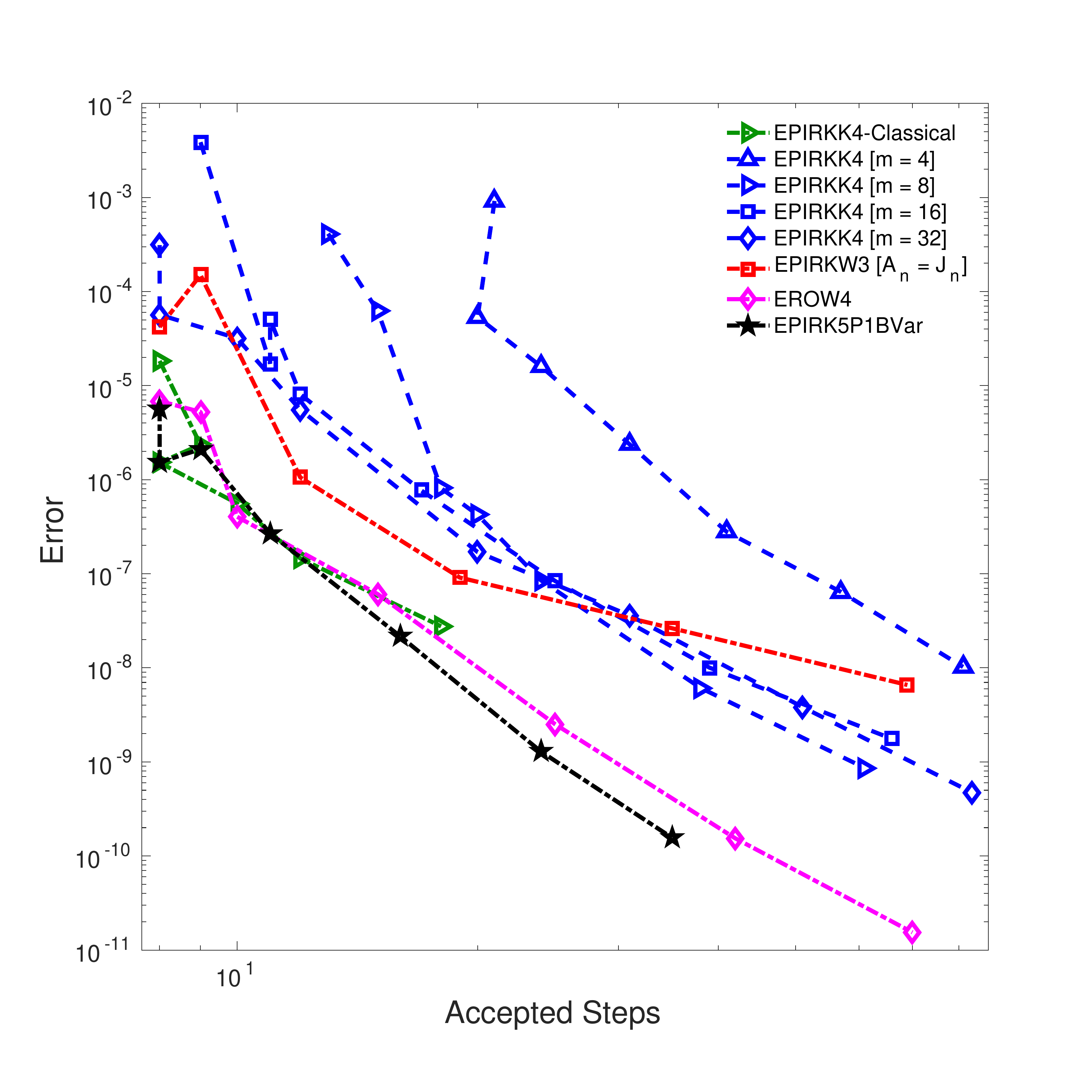}
		\caption{Convergence diagram}
		\label{fig:SWE-error-vs-steps}
    \end{subfigure}%
    \caption{Work-precision and convergence diagrams for different methods applied to Shallow Water Equations problem \eqref{eqn:SWE}.}
\end{figure}

\begin{table}[]
\centering
\begin{tabular}{|>{\arraybackslash}m{4cm}|>{\centering\arraybackslash}m{0.75cm}|>{\centering\arraybackslash}m{0.75cm}|>{\centering\arraybackslash}m{0.75cm}|>{\centering\arraybackslash}m{0.75cm}|>{\centering\arraybackslash}m{0.75cm}|>{\centering\arraybackslash}m{0.75cm}|>{\centering\arraybackslash}m{0.75cm}|>{\centering\arraybackslash}m{0.75cm}|}
\hline
\backslashbox{\textbf{Integrator}}{\textbf{Tolerance}}  & $10^{-1}$ & $10^{-2}$ & $10^{-3}$ & $10^{-4}$ & $10^{-5}$ & $10^{-6}$ & $10^{-7}$ & $10^{-8}$ \\ \hline
{\sc epirkk4-classical}       &  25  &  25  &  25  &  27  &  26  &  21  &  16  &  12 \\ \hline
{\sc epirkk4} [M = 4]         &   4  &   4  &   4  &   4  &   4  &   4  &   4  &   4 \\ \hline
{\sc epirkk4} [M = 8]         &   8  &   8  &   8  &   8  &   8  &   8  &   8  &   8 \\ \hline
{\sc epirkk4} [M = 16]        &  16  &  16  &  16  &  16  &  16  &  16  &  16  &  16 \\ \hline
{\sc epirkk4} [M = 32]        &  32  &  32  &  32  &  32  &  32  &  32  &  32  &  32 \\ \hline
{\sc epirkw3} [$\A_n = \J_n$] &  21  &  21  &  21  &  20  &  14  &   8  &   5  &   4 \\ \hline
{\sc erow4}                   &  30  &  30  &  22  &  22  &  16  &  12  &   8  &   7 \\ \hline
{\sc epirk5p1bvar}            &  57  &  53  &  38  &  30  &  23  &  19  &  16  &  14 \\ \hline   
\end{tabular}
\caption{Root mean square number of Krylov vectors per projection for each integrator applied to Shallow Water Equations problem \eqref{eqn:SWE}. {\sc epirkk4} uses fixed number of basis vectors in the Arnoldi process as indicated in brackets succeeding the name of the method. {\sc epirkw3} with $A_n = diag(J_n)$ directly computes the $\psi$ function product not needing to perform Arnoldi. All other methods, both {\sc epirkw3} with $A_n = J_n$ and the classical methods use an adaptive Arnoldi process to approximate the $\psi$ function product.
\label{Table:swe-kdim}}
\end{table}

\begin{table}[]
\centering
\begin{tabular}{|>{\arraybackslash}m{4cm}|>{\centering\arraybackslash}m{0.75cm}|>{\centering\arraybackslash}m{0.75cm}|>{\centering\arraybackslash}m{0.75cm}|>{\centering\arraybackslash}m{0.75cm}|>{\centering\arraybackslash}m{0.75cm}|>{\centering\arraybackslash}m{0.75cm}|>{\centering\arraybackslash}m{0.75cm}|>{\centering\arraybackslash}m{0.75cm}|}
\hline
\backslashbox{\textbf{Integrator}}{\textbf{Tolerance}}  & $10^{-1}$ & $10^{-2}$ & $10^{-3}$ & $10^{-4}$ & $10^{-5}$ & $10^{-6}$ & $10^{-7}$ & $10^{-8}$ \\ \hline
{\sc epirkk4-classical}       &   0  &   0  &   0  &   2  &   1  &   2  &   2  &   4  \\ \hline                                                                                                                                            
{\sc epirkk4} [M = 4]         &  12  &   6  &   7  &   5  &   8  &  11  &  13  &  10  \\ \hline                                                                                                                                            
{\sc epirkk4} [M = 8]         &  15  &   3  &   3  &   6  &   6  &   6  &   7  &   6  \\ \hline                                                                                                                                            
{\sc epirkk4} [M = 16]        &   3  &   2  &   2  &   2  &   4  &   6  &   7  &  18  \\ \hline                                                                                                                                            
{\sc epirkk4} [M = 32]        &   1  &   0  &   1  &   2  &   6  &   7  &   9  &  16  \\ \hline                                                                                                                                            
{\sc epirkw3} [$\A_n = \J_n$] &   0  &   0  &   0  &   1  &   3  &   3  &   7  &   2  \\ \hline                                                                                                                                            
{\sc erow4}                   &   0  &   0  &   1  &   0  &   3  &   6  &   9  &   8  \\ \hline                                                                                                                                            
{\sc epirk5p1bvar}            &   0  &   0  &   0  &   0  &   1  &   2  &   6  &   5  \\ \hline      
\end{tabular}
\caption{Total number of rejected timesteps for each integrator  applied to Shallow Water Equations problem \eqref{eqn:SWE}.\label{Table:swe-rejn}}
\end{table}

\begin{table}[]
\centering
\small
\begin{tabular}{|>{\arraybackslash}m{3.8cm}|>{\centering\arraybackslash}m{1cm}|>{\centering\arraybackslash}m{1cm}|>{\centering\arraybackslash}m{1cm}|>{\centering\arraybackslash}m{1cm}|>{\centering\arraybackslash}m{1cm}|>{\centering\arraybackslash}m{1cm}|>{\centering\arraybackslash}m{1cm}|>{\centering\arraybackslash}m{1cm}|}
\hline
\backslashbox{\textbf{Integrator}}{\textbf{Accuracy}}  & $10^{-2}$ & $10^{-3}$ & $10^{-4}$ & $10^{-5}$ & $10^{-6}$ & $10^{-7}$ & $10^{-8}$ & $10^{-9}$ \\ \hline
{\sc epirkk4-classical}          &     ---   &   ---      &     .135  &      .157  &      .183  &      .264  &    ---     &    ---    \\ \hline
{\sc epirkk4} [M = 4]            &     ---   &     .131   &     .123  &      .185  &      .249  &      .342  &    ---     &    ---    \\ \hline
{\sc epirkk4} [M = 8]            &     ---   &     .086   &     .095  &    ---     &      .123  &      .161  &      .247  &      .378 \\ \hline
{\sc epirkk4} [M = 16]           &    .07    &   ---      &     .082  &      .092  &      .134  &      .19   &      .497  &    ---    \\ \hline
{\sc epirkk4} [M = 32]           &     ---   &     .08    &     .078  &      .13   &      .23   &      .329  &      .534  &      .882 \\ \hline
{\sc epirkw3} [$\A_n = \J_n$]    &     ---   &     .158   &     .117  &      .186  &    ---     &      .232  &      .685  &    ---    \\ \hline
{\sc erow4}                      &     ---   &   ---      &   ---     &      .114  &      .124  &      .173  &      .238  &      .37  \\ \hline
{\sc epirk5p1bvar}               &     ---   &   ---      &   ---     &      .123  &      .176  &      .225  &      .33   &      .399 \\ \hline
\end{tabular}
\caption{CPU time in which accuracy is achieved by different integrators applied to Shallow Water Equations problem \eqref{eqn:SWE}. In our experiments, only {\sc erow4} achieves an accuracy of 1e-10 in   0.515 seconds.\label{Table:swe-accuracy-time}}
\end{table}

Figure \ref{fig:SWE-work-prec-diag-kc} suggests that for the Shallow Water Equations on sphere problem, {\sc epirkk4} with basis size set to eight (sixteen) is able to achieve solutions in the high tolerance region unlike the classical integrators, and in appropriately less amount of time. {\sc epirkk4} with sixteen basis vectors continues to be atleast 23\% more efficient until a solution accuracy of \num{1e-5} before becoming slightly more expensive than the classical methods. On the other hand, {\sc epirkk4} with basis size eight remains to be about as good as the classical methods for the entire tolerance spectrum. It is rather interesting that increasing the basis size to sixteen or thirty-two does not significantly improve the performance of {\sc epirkk4} integrators and in some cases makes them perform poorly. This can be explained based on Figure \ref{fig:SWE-error-vs-steps} where executions with eight, sixteen and thirty-two basis vectors end up taking approximately the same number of timesteps beyond a tolerance setting of \num{1e-6} making executions with larger basis size cost more per timestep. For these runs of {\sc epirkk4}, attempts to increase the stepsize are faced with step rejections as shown in Table \ref{Table:swe-rejn}.

Among different executions of {\sc epirkk4}, one with four basis vectors performs the poorest as seen in Figure \ref{fig:SWE-work-prec-diag-kc} and \ref{fig:SWE-work-prec-diag-wk}. This is because four basis vectors in the Krylov-subspace is insufficient to capture the dynamics of the ODE system under consideration forcing the integrator to take significantly more timesteps than other executions of {\sc epirkk4} (see Figure \ref{fig:SWE-error-vs-steps}).

{\sc epirkw3} method with $\A_n=\J_n$ performs like classical exponential integrators as shown in Figure \ref{fig:SWE-work-prec-diag-wc} and this is explained in Section \ref{sec:L96_section}. We notice that the error controller forces the method to take larger number of time steps as accuracy increases (see Figure \ref{fig:SWE-error-vs-steps}) explaining the devitation from the classical integrators for lower tolerances. 

For the Shallow Water Equation system, {\sc epirkw3}  with $\A_n = \textnormal{diag}(\J_n)$ did not converge for any tolerance setting as the method was highly unstable. This can be explained by the structure of the exact Jacobian versus that of the approximate Jacobian. The exact Jacobian is not diagonally dominant and has an incomplete banded structure. On the other hand, the approximation only includes the diagonal and is diagonally dominant. Therefore, we exclude {\sc epirkw3} with $\A_n = \textnormal{diag}(\J_n)$ from our current discussion.

Table \ref{Table:swe-accuracy-time} presents an alternative approach to interpreting the work precision diagrams. It shows the best CPU time by which each integrator could achieve a certain level of accuracy.

\subsection{The Allen-Cahn problem}

We consider the Allen-Cahn equation \cite{Allen_1979_ALCN}:
\begin{equation}
\label{eqn:AllenCahn}
\frac{\partial u}{\partial t} = \alpha\, \nabla^2 u + \gamma\, (u - u^3),\quad (x,y) \in [0, 1] \times [0, 1]~\textnormal{(space units)}, \quad t \in [0, 1.2]~\textnormal{(time units)}.
\end{equation}
For our experiments we set $\alpha = 0.01$ and $\gamma = 1.0$. The model has homogeneous Neumann boundary conditions and the initial conditions are $u(t = 0) = 0.4 + 0.1\,(x + y) + 0.1\, \sin(10x)\,\sin(20y)$. Discretization is done in the spatial domain using finite difference for two different grid sizes -- $64 \times 64$ and $256 \times 256$.
The work-precision diagrams from variable time-stepping are shown in Figures \ref{fig:ALCN-work-prec-diag-kc}, \ref{fig:ALCN-work-prec-diag-wc}, and \ref{fig:ALCN-work-prec-diag-wk} for grid of size $64 \times 64$. Figure \ref{fig:ALCN-error-vs-steps} shows the corresponding convergence diagram and plots the global error with respect to the reference against the number of accepted steps. Table \ref{Table:alcn-kdim} shows the (root mean square) number of Krylov vectors per projection used by each method at different tolerance levels. Table \ref{Table:alcn-accuracy-time} is a reinterpretation of the work precision diagrams in a tabular form where each entry represents the best CPU time in which the solution at the end of the timespan attains a certain level of accuracy. Figures \ref{fig:ALCN-work-prec-diag-kc-256}, \ref{fig:ALCN-work-prec-diag-wc-256}, \ref{fig:ALCN-work-prec-diag-wk-256} and \ref{fig:ALCN-error-vs-steps-256}, and Tables \ref{Table:alcn-kdim-256}, \ref{Table:alcn-rejn-256} and \ref{Table:alcn-accuracy-time-256} correspond to results from performing the same experiment on a  grid of size $256 \times 256$.

\begin{figure}[!htb]
    \centering
    \begin{subfigure}{.45\textwidth}
        \centering
	    \includegraphics[scale=0.25]{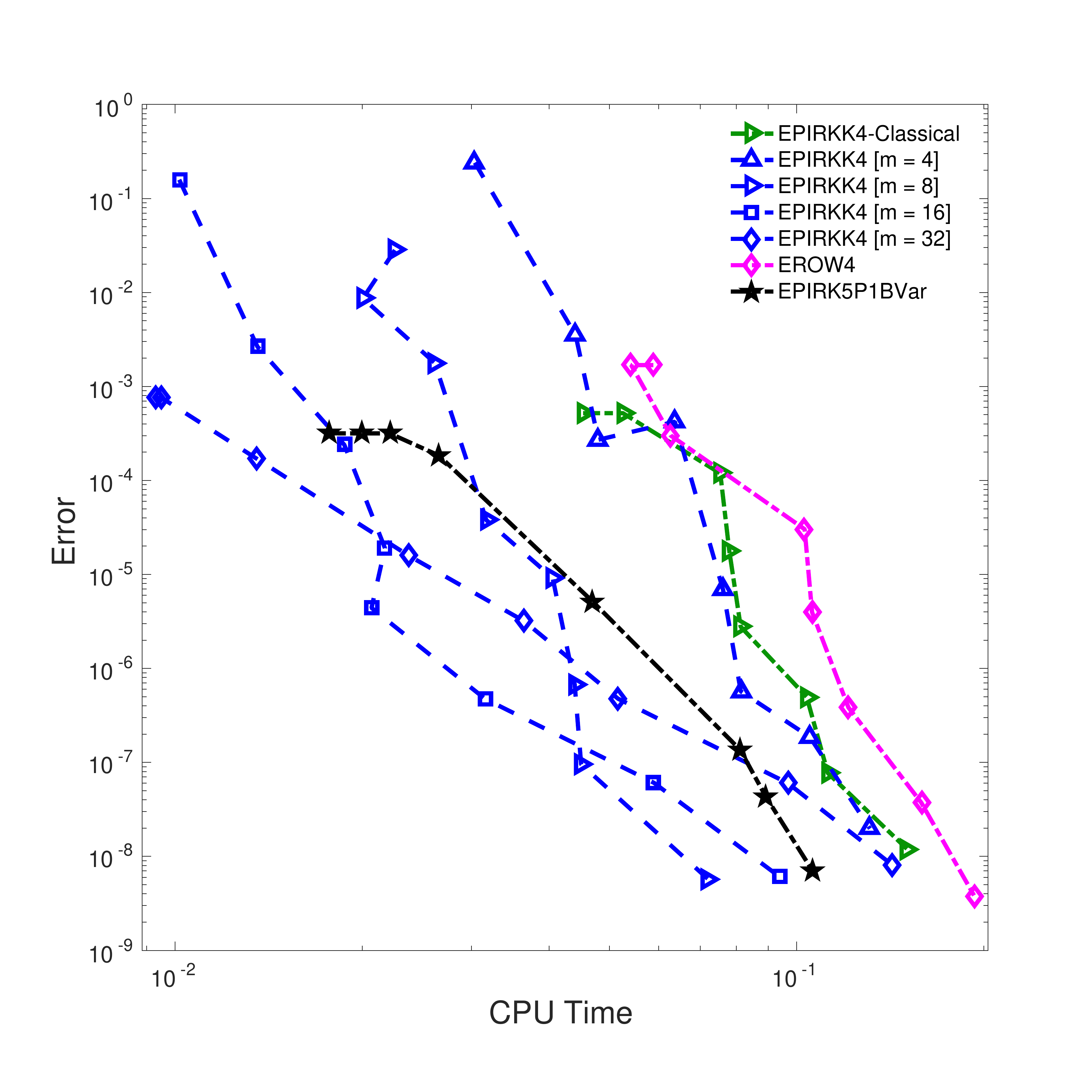}
	    \caption{K-type versus classical methods}
            \label{fig:ALCN-work-prec-diag-kc}
    \end{subfigure}
    ~~
    \begin{subfigure}{0.45\textwidth}
        \centering
	    \includegraphics[scale=0.25]{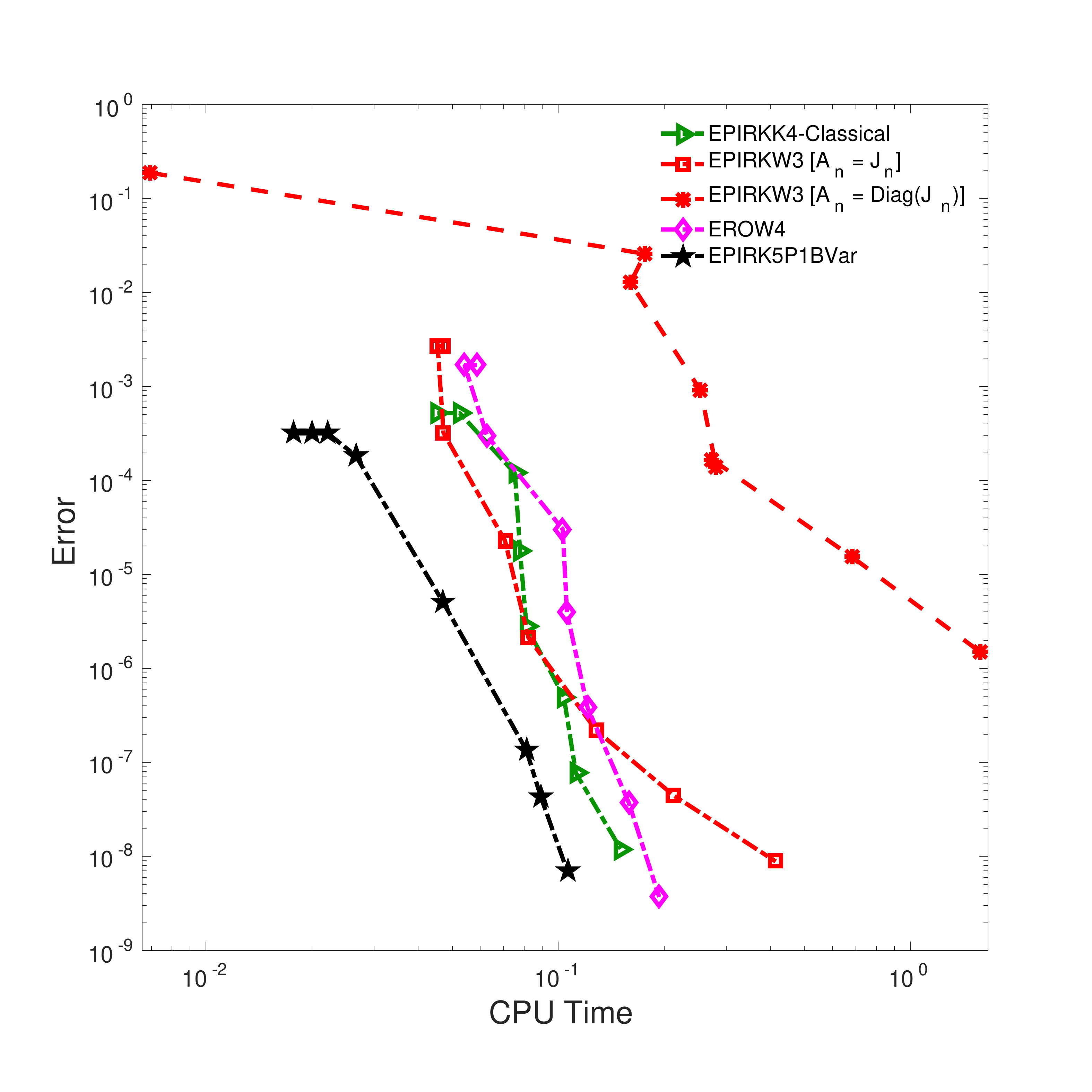}
	    \caption{W-type versus classical methods}
             \label{fig:ALCN-work-prec-diag-wc}
    \end{subfigure}%
    \caption{Work-precision diagrams for different methods applied to Allen-Cahn problem \eqref{eqn:AllenCahn} on a $64 \times 64$ grid.}
\end{figure}

\begin{figure}[!htb]
    \centering
    \begin{subfigure}{.45\textwidth}
        \centering
	    \includegraphics[scale=0.25]{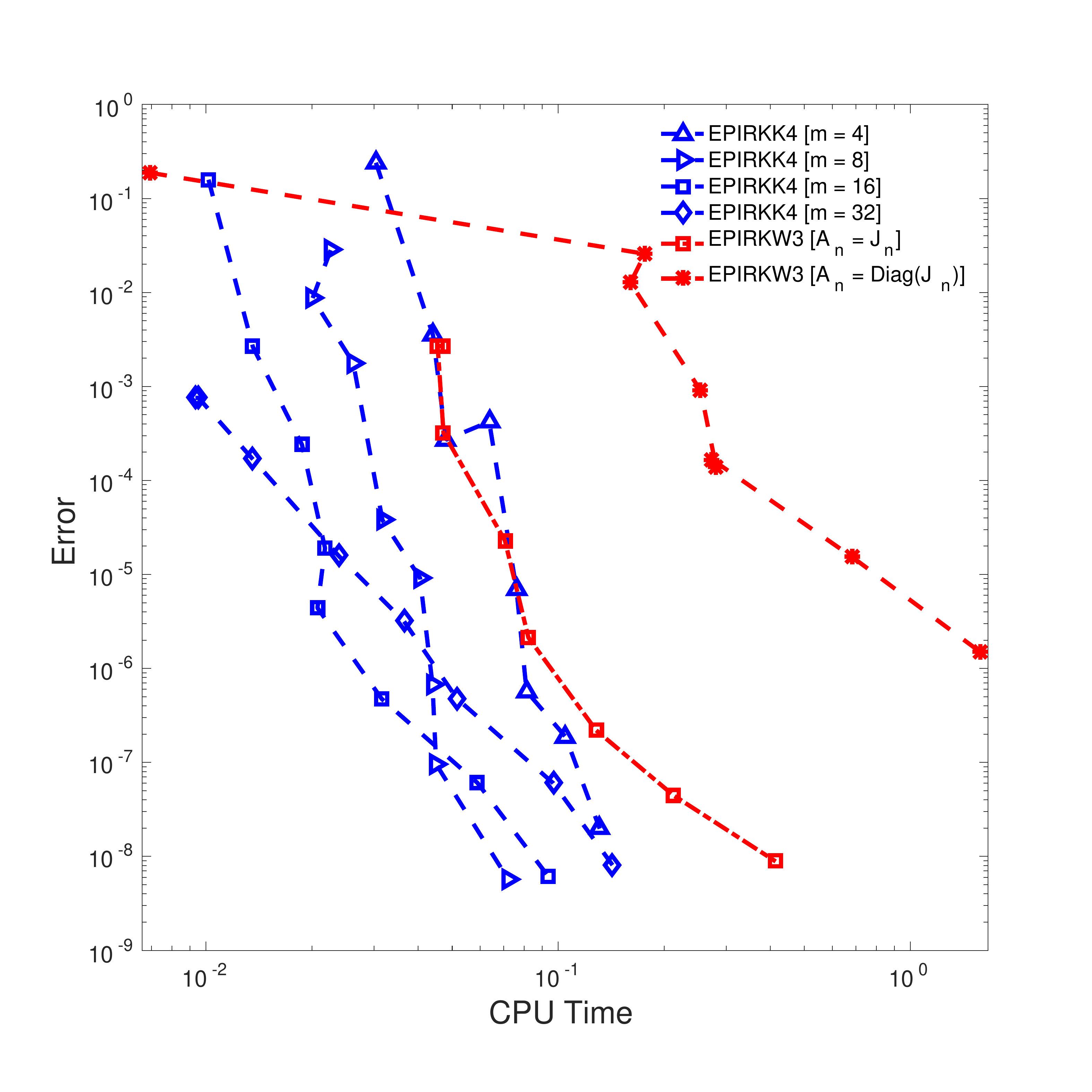}
  	    \caption{W-type versus K-type methods}
		\label{fig:ALCN-work-prec-diag-wk}
    \end{subfigure}%
    \begin{subfigure}{0.45\textwidth}
        \centering
		\includegraphics[scale=0.25]{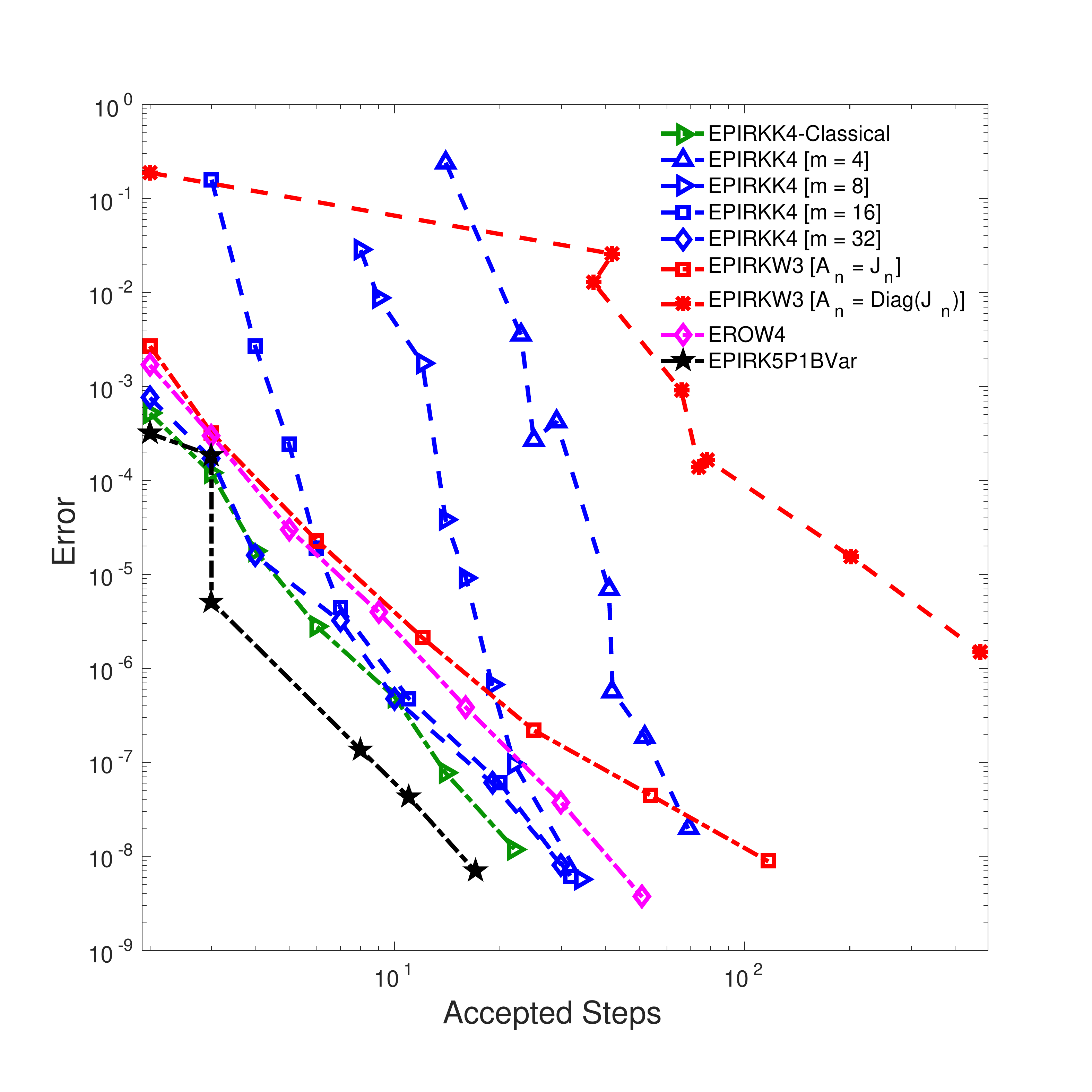}
	    \caption{Convergence diagram}
		\label{fig:ALCN-error-vs-steps}
    \end{subfigure}%
    \caption{Work-precision diagrams and convergence diagrams for different methods applied to Allen-Cahn problem \eqref{eqn:AllenCahn} on a $64 \times 64$ grid.}
\end{figure}

\begin{table}[]
\centering
\begin{tabular}{|>{\arraybackslash}m{4.2cm}|>{\centering\arraybackslash}m{0.75cm}|>{\centering\arraybackslash}m{0.75cm}|>{\centering\arraybackslash}m{0.75cm}|>{\centering\arraybackslash}m{0.75cm}|>{\centering\arraybackslash}m{0.75cm}|>{\centering\arraybackslash}m{0.75cm}|>{\centering\arraybackslash}m{0.75cm}|>{\centering\arraybackslash}m{0.75cm}|}
\hline
\backslashbox{\textbf{Integrator}}{\textbf{Tolerance}}  & $10^{-1}$ & $10^{-2}$ & $10^{-3}$ & $10^{-4}$ & $10^{-5}$ & $10^{-6}$ & $10^{-7}$ & $10^{-8}$ \\ \hline
{\sc epirkk4-classical}                          &  49  &  49  &  42  &  37  &  30  &  23  &  19  &  16   \\ \hline                                                                                                                        
{\sc epirkk4} [M = 4]                            &   4  &   4  &   4  &   4  &   4  &   4  &   4  &   4   \\ \hline                                                                                                                        
{\sc epirkk4} [M = 8]                            &   8  &   8  &   8  &   8  &   8  &   8  &   8  &   8   \\ \hline                                                                                                                        
{\sc epirkk4} [M = 16]                           &  16  &  16  &  16  &  16  &  16  &  16  &  16  &  16   \\ \hline                                                                                                                        
{\sc epirkk4} [M = 32]                           &  32  &  32  &  32  &  32  &  32  &  32  &  32  &  32   \\ \hline                                                                                                                        
{\sc epirkw3} [$\A_n = \J_n$]                    &  48  &  48  &  37  &  26  &  17  &  11  &   7  &   5   \\ \hline                                                                                                                        
{\sc epirkw3} [$\A_n = \textnormal{diag}(\J_n)$] &   0  &   0  &   0  &   0  &   0  &   0  &   0  &   0   \\ \hline                                                                                                                        
{\sc erow4}                                      &  57  &  57  &  47  &  39  &  29  &  21  &  15  &  12   \\ \hline                                                                                                                        
{\sc epirk5p1bvar}                               &  32  &  47  &  52  &  43  &  31  &  21  &  18  &  15   \\ \hline  
\end{tabular}
\caption{Root mean square number of Krylov vectors per projection for each integrator applied to Allen-Cahn problem \eqref{eqn:AllenCahn} on a $64 \times 64$ grid.{\sc epirkk4} uses fixed number of basis vectors in the Arnoldi process as indicated in brackets succeeding the name of the method. {\sc epirkw3} with $A_n = diag(J_n)$ directly computes the $\psi$ function product not needing to perform Arnoldi. All other methods, both {\sc epirkw3} with $A_n = J_n$ and the classical methods use an adaptive Arnoldi process to approximate the $\psi$ function product.\label{Table:alcn-kdim}}
\end{table}

\begin{table}[]
\centering
\begin{tabular}{|>{\arraybackslash}m{4.2cm}|>{\centering\arraybackslash}m{0.75cm}|>{\centering\arraybackslash}m{0.75cm}|>{\centering\arraybackslash}m{0.75cm}|>{\centering\arraybackslash}m{0.75cm}|>{\centering\arraybackslash}m{0.75cm}|>
{\centering\arraybackslash}m{0.75cm}|>{\centering\arraybackslash}m{0.75cm}|>{\centering\arraybackslash}m{0.75cm}|}
\hline
\backslashbox{\textbf{Integrator}}{\textbf{Tolerance}}  & $10^{-1}$ & $10^{-2}$ & $10^{-3}$ & $10^{-4}$ & $10^{-5}$ & $10^{-6}$ & $10^{-7}$ & $10^{-8}$ \\ \hline
{\sc epirkk4-classical}                           &   0  &   0  &   1  &   0  &   0  &   1  &   0  &   0  \\ \hline                                                                                                                        
{\sc epirkk4} [M = 4]                             &   4  &   2  &   2  &  11  &   3  &   6  &  14  &   8  \\ \hline                                                                                                                        
{\sc epirkk4} [M = 8]                             &   6  &   1  &   1  &   3  &   6  &   5  &   0  &   0  \\ \hline                                                                                                                        
{\sc epirkk4} [M = 16]                            &   1  &   1  &   2  &   2  &   0  &   0  &   0  &   0  \\ \hline                                                                                                                        
{\sc epirkk4} [M = 32]                            &   0  &   0  &   0  &   0  &   0  &   0  &   0  &   0  \\ \hline                                                                                                                        
{\sc epirkw3} [$\A_n = \J_n$]                     &   0  &   0  &   0  &   0  &   0  &   0  &   0  &   0  \\ \hline                                                                                                                        
{\sc epirkw3} [$\A_n = \textnormal{diag}(\J_n)$]  &   0  &  15  &  15  &  13  &  11  &   0  &   0  &   0  \\ \hline                                                                                                                        
{\sc erow4}                                       &   0  &   0  &   0  &   1  &   0  &   0  &   0  &   0  \\ \hline                                                                                                                        
{\sc epirk5p1bvar}                                &   0  &   0  &   0  &   0  &   0  &   1  &   0  &   0  \\ \hline  
\end{tabular}
\caption{Total number of rejected timesteps for each integrator applied to Allen-Cahn problem \eqref{eqn:AllenCahn} on a $64 \times 64$ grid.{\sc epirkk4} executions with basis sizes set to four or eight suffer from step rejections and take large number of timesteps for all different tolerance levels. {\sc epirkw3} with $\A_n = \textnormal{diag}(\J_n)$ also shows similar behavior. This hints at the poor stability of these methods. \label{Table:alcn-rejn}}
\end{table}

\begin{table}[]
\centering
\small
\begin{tabular}{|>{\arraybackslash}m{3.8cm}|>{\centering\arraybackslash}m{1cm}|>{\centering\arraybackslash}m{1cm}|>{\centering\arraybackslash}m{1cm}|>{\centering\arraybackslash}m{1cm}|>{\centering\arraybackslash}m{1cm}|>{\centering\arraybackslash}m{1cm}|>{\centering\arraybackslash}m{1cm}|>{\centering\arraybackslash}m{1cm}|}
\hline
\backslashbox{\textbf{Integrator}}{\textbf{Accuracy}}  & $10^{-1}$ & $10^{-2}$ & $10^{-3}$ & $10^{-4}$ & $10^{-5}$ & $10^{-6}$ & $10^{-7}$ & $10^{-8}$ \\ \hline
{\sc epirkk4-classical}                             &  ---     &    ---     &      .045  &      .078  &      .081  &      .104  &      .112  &      ---  \\ \hline
{\sc epirkk4} [M = 4]                               &  ---     &      .044  &      .048  &    ---     &      .076  &      .081  &      .131  &      ---  \\ \hline
{\sc epirkk4} [M = 8]                               &    .023  &      .02   &    ---     &      .032  &      .041  &      .044  &      .045  &     .072  \\ \hline
{\sc epirkk4} [M = 16]                              &  ---     &      .014  &      .019  &      .022  &      .021  &      .032  &      .059  &     .094  \\ \hline
{\sc epirkk4} [M = 32]                              &  ---     &    ---     &      .009  &      .024  &      .036  &      .052  &      .097  &     .143  \\ \hline
{\sc epirkw3} [$\A_n = \J_n$]                       &  ---     &      .045  &      .047  &      .071  &      .082  &      .128  &      .212  &     .414  \\ \hline
{\sc epirkw3} [$\A_n = \textnormal{diag}(\J_n)$]    &    .16   &    ---     &      .254  &      .683  &     1.583  &    ---     &    ---     &      ---  \\ \hline
{\sc erow4}                                         &  ---     &      .054  &      .063  &      .103  &      .106  &      .121  &      .159  &     .193  \\ \hline
{\sc epirk5p1bvar}                                  &  ---     &    ---     &      .018  &    ---     &      .047  &      .081  &      .089  &     .106  \\ \hline
\end{tabular}
\caption{CPU time in which accuracy is achieved by different integrators applied to Allen-Cahn problem \eqref{eqn:AllenCahn} on a $64 \times 64$ grid.}
\label{Table:alcn-accuracy-time}
\end{table}

\begin{figure}[!htb]
    \centering
    \begin{subfigure}{.45\textwidth}
        \centering
	    \includegraphics[scale=0.25]{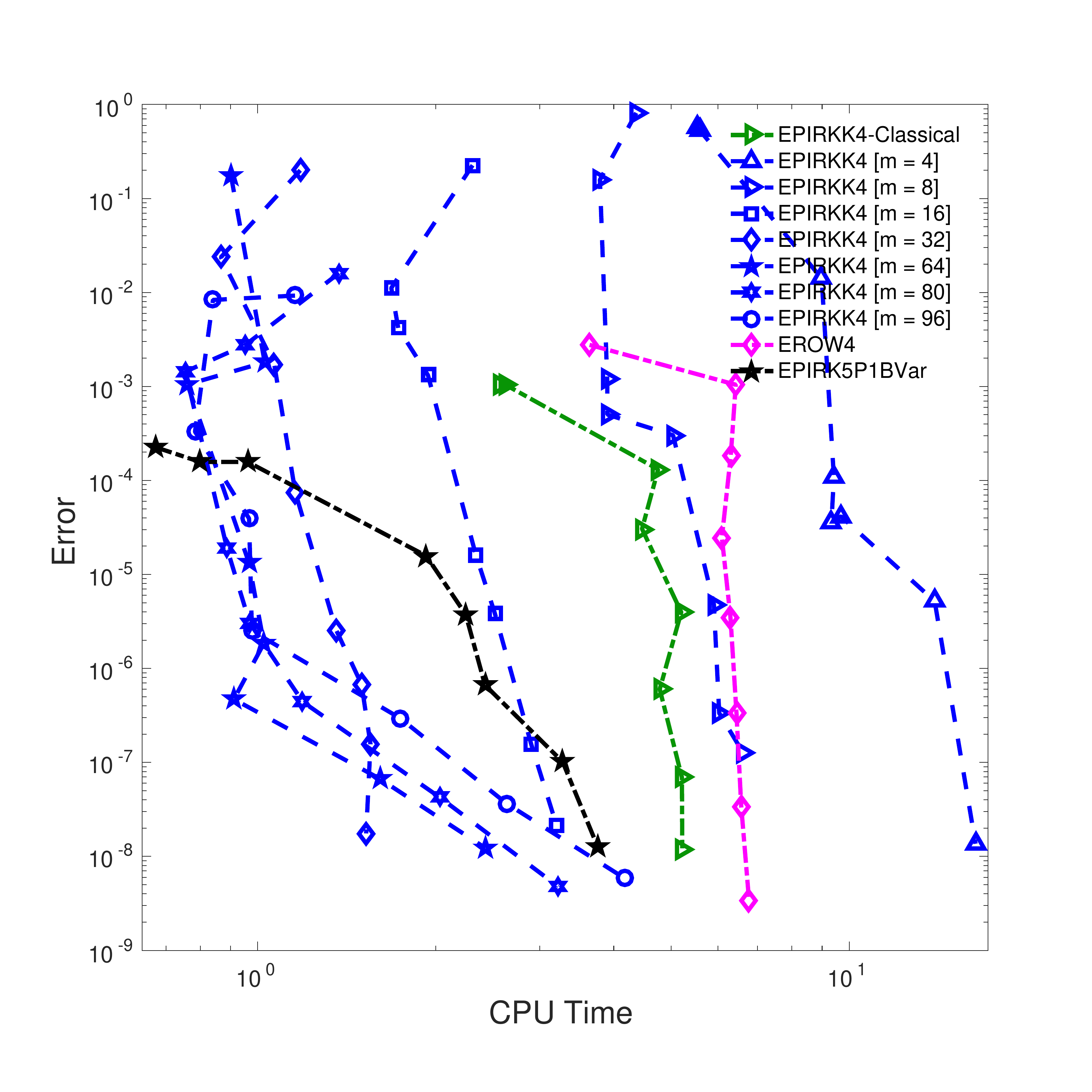}
	    \caption{K-type versus classical methods}
            \label{fig:ALCN-work-prec-diag-kc-256}
    \end{subfigure}
    ~~
    \begin{subfigure}{0.45\textwidth}
        \centering
	    \includegraphics[scale=0.25]{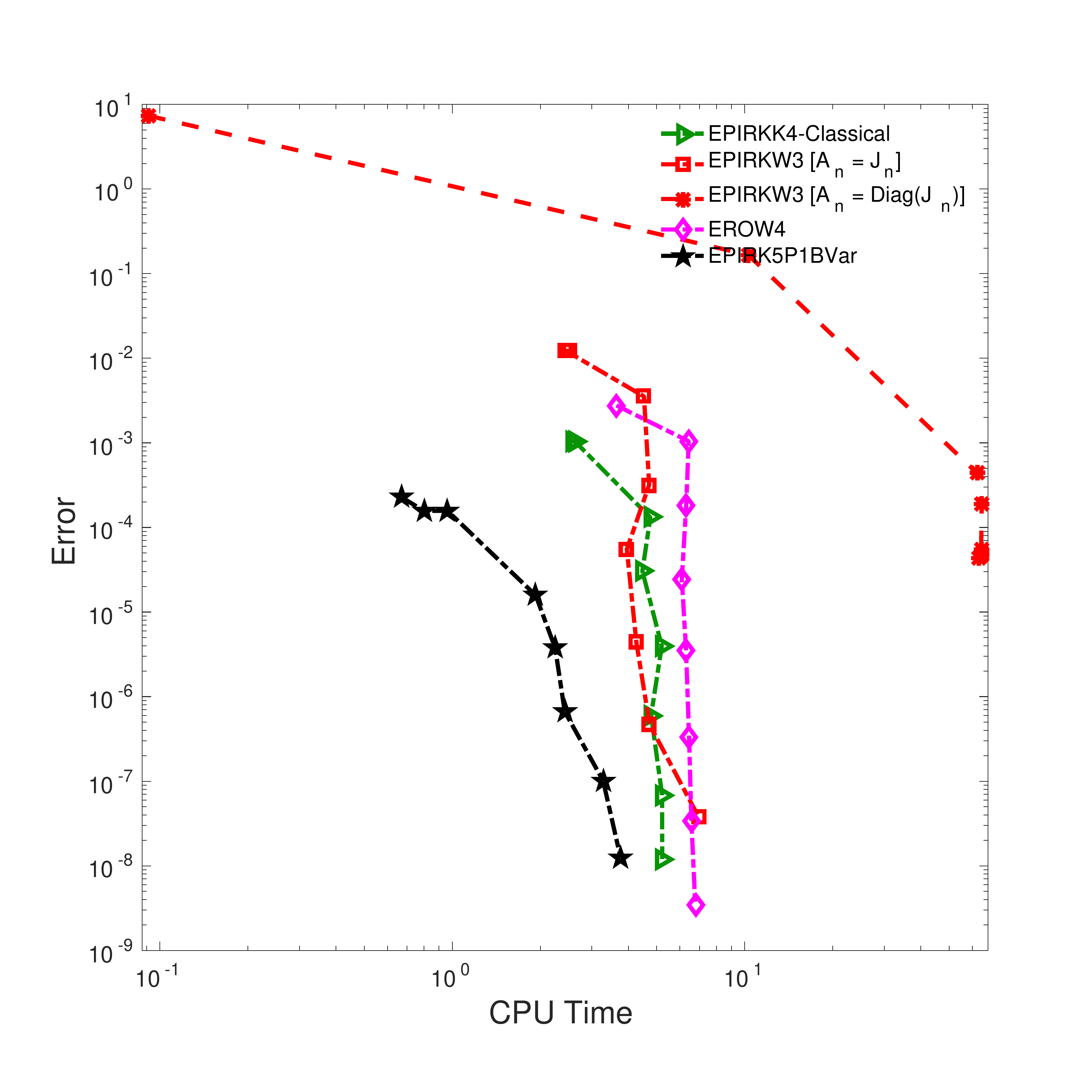}
	    \caption{W-type versus classical methods}
             \label{fig:ALCN-work-prec-diag-wc-256}
    \end{subfigure}%
    \caption{Work-precision diagrams for different methods applied to Allen-Cahn problem \eqref{eqn:AllenCahn} on a $256 \times 256$ grid.}
\end{figure}

\begin{figure}[!htb]
    \centering
    \begin{subfigure}{.45\textwidth}
        \centering
	    \includegraphics[scale=0.25]{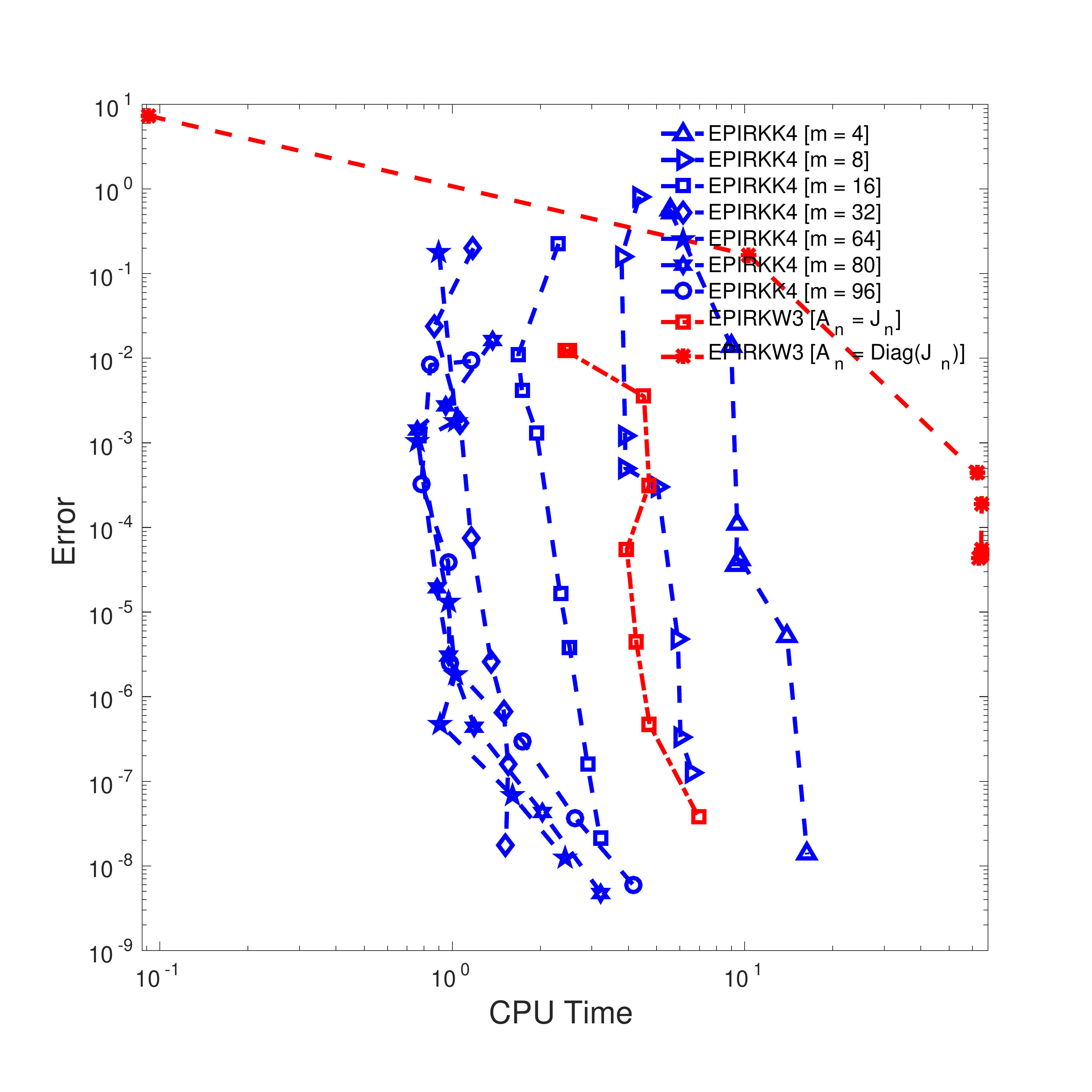}
  	    \caption{W-type versus K-type methods}
		\label{fig:ALCN-work-prec-diag-wk-256}
    \end{subfigure}%
    \begin{subfigure}{0.45\textwidth}
        \centering
		\includegraphics[scale=0.25]{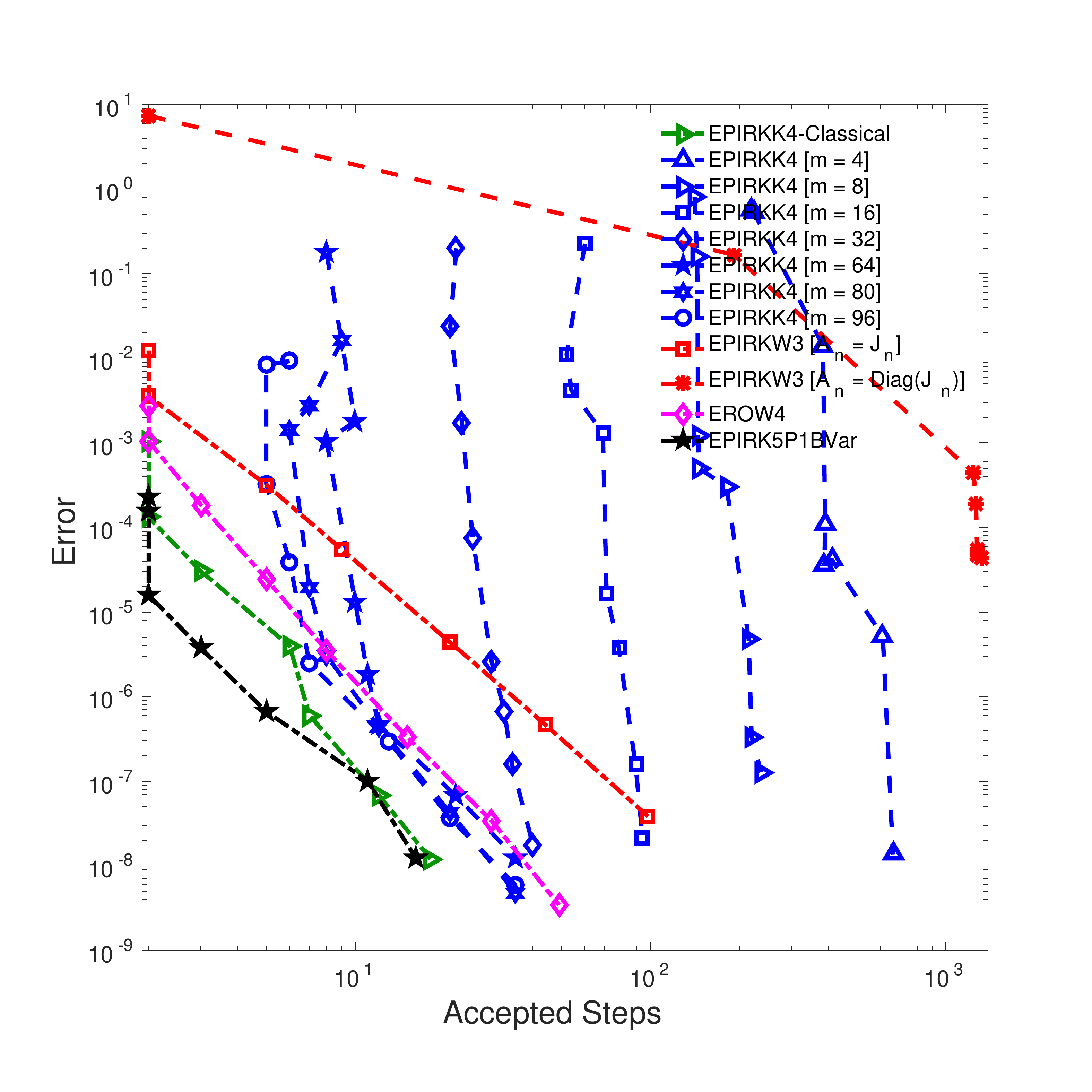}
	    \caption{Convergence diagram}
		\label{fig:ALCN-error-vs-steps-256}
    \end{subfigure}%
    \caption{Work-precision diagrams and convergence diagrams for different methods applied to Allen-Cahn problem \eqref{eqn:AllenCahn} on a $256 \times 256$ grid.}
\end{figure}

\begin{table}[tbh!]
\centering
\begin{tabular}{|>{\arraybackslash}m{4.2cm}|>{\centering\arraybackslash}m{0.75cm}|>{\centering\arraybackslash}m{0.75cm}|>{\centering\arraybackslash}m{0.75cm}|>{\centering\arraybackslash}m{0.75cm}|>{\centering\arraybackslash}m{0.75cm}|>{\centering\arraybackslash}m{0.75cm}|>{\centering\arraybackslash}m{0.75cm}|>{\centering\arraybackslash}m{0.75cm}|}
\hline
\backslashbox{\textbf{Integrator}}{\textbf{Tolerance}}  & $10^{-1}$ & $10^{-2}$ & $10^{-3}$ & $10^{-4}$ & $10^{-5}$ & $10^{-6}$ & $10^{-7}$ & $10^{-8}$ \\ \hline
{\sc epirkk4-classical}                          &  192  &  192  &  211  &  177  &  131  &  118  &   92  &   75   \\ \hline                                                                                                                
{\sc epirkk4} [M = 4]                            &    4  &    4  &    4  &    4  &    4  &    4  &    4  &    4   \\ \hline                                                                                                                
{\sc epirkk4} [M = 8]                            &    8  &    8  &    8  &    8  &    8  &    8  &    8  &    8   \\ \hline                                                                                                                
{\sc epirkk4} [M = 16]                           &   16  &   16  &   16  &   16  &   16  &   16  &   16  &   16   \\ \hline                                                                                                                
{\sc epirkk4} [M = 32]                           &   32  &   32  &   32  &   32  &   32  &   32  &   32  &   32   \\ \hline                                                                                                                
{\sc epirkk4} [M = 64]                           &   64  &   64  &   64  &   64  &   64  &   64  &   64  &   64   \\ \hline                                                                                                                
{\sc epirkk4} [M = 80]                           &   80  &   80  &   80  &   80  &   80  &   80  &   80  &   80   \\ \hline                                                                                                                
{\sc epirkk4} [M = 96]                           &   96  &   96  &   96  &   96  &   96  &   96  &   96  &   96   \\ \hline                                                                                                                
{\sc epirkw3} [$\A_n = \J_n$]                    &  186  &  186  &  204  &  125  &   93  &   61  &   41  &   28   \\ \hline                                                                                                                
{\sc epirkw3} [$\A_n = \textnormal{diag}(\J_n)$] &    0  &    0  &    0  &    0  &    0  &    0  &    0  &    0   \\ \hline                                                                                                                
{\sc erow4}                                      &  231  &  254  &  215  &  170  &  133  &  100  &   71  &   54   \\ \hline                                                                                                                
{\sc epirk5p1bvar}                               &  328  &  358  &  410  &  475  &  420  &  331  &  234  &  205   \\ \hline   
\end{tabular}
\caption{Root mean square number of Krylov vectors per projection for each integrator applied to Allen-Cahn problem \eqref{eqn:AllenCahn} on a $256 \times 256$ grid. {\sc epirkk4} uses fixed number of basis vectors in the Arnoldi process as indicated in brackets succeeding the name of the method. {\sc epirkw3} with $A_n = diag(J_n)$ directly computes the $\psi$ function product not needing to perform Arnoldi. All other methods, both {\sc epirkw3} with $A_n = J_n$ and the classical methods use an adaptive Arnoldi process to approximate the $\psi$ function product.\label{Table:alcn-kdim-256}}
\end{table}

\begin{table}[tbh!]
\centering
\begin{tabular}{|>{\arraybackslash}m{4.2cm}|>{\centering\arraybackslash}m{0.75cm}|>{\centering\arraybackslash}m{0.75cm}|>{\centering\arraybackslash}m{0.75cm}|>{\centering\arraybackslash}m{0.75cm}|>{\centering\arraybackslash}m{0.75cm}|>
{\centering\arraybackslash}m{0.75cm}|>{\centering\arraybackslash}m{0.75cm}|>{\centering\arraybackslash}m{0.75cm}|}
\hline
\backslashbox{\textbf{Integrator}}{\textbf{Tolerance}}  & $10^{-1}$ & $10^{-2}$ & $10^{-3}$ & $10^{-4}$ & $10^{-5}$ & $10^{-6}$ & $10^{-7}$ & $10^{-8}$ \\ \hline
{\sc epirkk4-classical}                          &    0  &    0  &    0  &    0  &    1  &    1  &    1  &    1   \\ \hline                                                                                                                
{\sc epirkk4} [M = 4]                            &   76  &   70  &   36  &   77  &   80  &   53  &    8  &  168   \\ \hline                                                                                                                
{\sc epirkk4} [M = 8]                            &  110  &   36  &   49  &   46  &   81  &   84  &   87  &  106   \\ \hline                                                                                                                
{\sc epirkk4} [M = 16]                           &   56  &   24  &   22  &    6  &   37  &   36  &   43  &   49   \\ \hline                                                                                                                
{\sc epirkk4} [M = 32]                           &   16  &    3  &   10  &   11  &   13  &   14  &   14  &    1   \\ \hline                                                                                                                
{\sc epirkk4} [M = 64]                           &    5  &    5  &    3  &    4  &    4  &    0  &    0  &    0   \\ \hline                                                                                                                
{\sc epirkk4} [M = 80]                           &    6  &    4  &    2  &    2  &    2  &    0  &    0  &    0   \\ \hline                                                                                                                
{\sc epirkk4} [M = 96]                           &    3  &    2  &    1  &    2  &    0  &    0  &    0  &    0   \\ \hline                                                                                                                
{\sc epirkw3} [$\A_n = \J_n$]                    &    0  &    0  &    0  &    2  &    1  &    1  &    0  &    0   \\ \hline                                                                                                                
{\sc epirkw3} [$\A_n = \textnormal{diag}(\J_n)$] &    0  &   46  &  156  &  181  &  169  &  169  &  164  &   41   \\ \hline                                                                                                                
{\sc erow4}                                      &    0  &    0  &    0  &    0  &    0  &    0  &    0  &    0   \\ \hline                                                                                                                
{\sc epirk5p1bvar}                               &    0  &    0  &    0  &    0  &    0  &    0  &    0  &    0   \\ \hline       
\end{tabular}
\caption{Total number of rejected timesteps for each integrator applied to Allen-Cahn problem \eqref{eqn:AllenCahn} on a $256 \times 256$ grid. {\sc epirkk4} executions with smaller basis sizes suffer severely from step rejections and take large number of timesteps for all different tolerance levels and so does {\sc epirkw3} with $\A_n = \textnormal{diag}(\J_n)$, signaling the poor stability of these methods. \label{Table:alcn-rejn-256}}
\end{table}

\begin{table}[tbh!]
\centering
\small
\begin{tabular}{|>{\arraybackslash}m{3.8cm}|>{\centering\arraybackslash}m{1cm}|>{\centering\arraybackslash}m{1cm}|>{\centering\arraybackslash}m{1cm}|>{\centering\arraybackslash}m{1cm}|>{\centering\arraybackslash}m{1cm}|>{\centering\arraybackslash}m{1cm}|>{\centering\arraybackslash}m{1cm}|>{\centering\arraybackslash}m{1cm}|}
\hline
\backslashbox{\textbf{Integrator}}{\textbf{Accuracy}}  & $10^{-1}$ & $10^{-2}$ & $10^{-3}$ & $10^{-4}$ & $10^{-5}$ & $10^{-6}$ & $10^{-7}$ & $10^{-8}$ \\ \hline
{\sc epirkk4-classical}                             &   ---    &      2.567 &      4.754 &      4.465 &      5.201 &      4.784 &      5.216 &      ---     \\ \hline
{\sc epirkk4} [M = 4]                               &    8.984 &     ---    &      9.422 &      9.333 &     13.949 &     ---    &     16.353 &      ---     \\ \hline
{\sc epirkk4} [M = 8]                               &   ---    &      3.899 &      3.903 &     ---    &      5.907 &      6.03  &     ---    &      ---     \\ \hline
{\sc epirkk4} [M = 16]                              &    1.683 &      1.729 &     ---    &      2.337 &      2.518 &      2.901 &      3.202 &      ---     \\ \hline
{\sc epirkk4} [M = 32]                              &     .867 &      1.067 &     ---    &      1.158 &      1.357 &      1.499 &      1.526 &      ---     \\ \hline
{\sc epirkk4} [M = 64]                              &   ---    &       .761 &     ---    &       .967 &      1.025 &       .912 &      1.609 &      ---     \\ \hline
{\sc epirkk4} [M = 80]                              &    1.373 &       .755 &     ---    &       .888 &       .97  &      1.192 &      2.033 &      3.222   \\ \hline
{\sc epirkk4} [M = 96]                              &   ---    &       .841 &       .784 &       .967 &       .978 &      1.742 &      2.635 &      4.172   \\ \hline
{\sc epirkw3} [$\A_n = \J_n$]                       &    2.444 &      4.522 &      4.712 &      3.952 &      4.26  &      4.727 &      6.932 &      ---     \\ \hline
{\sc epirkw3} [$\A_n = \textnormal{diag}(\J_n)$]    &   ---    &     ---    &     62.205 &     62.858 &     ---    &     ---    &     ---    &      ---     \\ \hline
{\sc erow4}                                         &   ---    &      3.641 &      6.308 &      6.093 &      6.3   &      6.45  &      6.57  &      6.772   \\ \hline
{\sc epirk5p1bvar}                                  &   ---    &     ---    &       .672 &      1.928 &      2.248 &      2.423 &      3.76  &      ---     \\ \hline
\end{tabular}
\caption{CPU time in which accuracy is achieved by different integrators applied to Allen-Cahn problem \eqref{eqn:AllenCahn} on a $256 \times 256$ grid.}
\label{Table:alcn-accuracy-time-256}
\end{table}

Figure \ref{fig:ALCN-work-prec-diag-kc} demonstrates that {\sc epirkk4} with basis size set to either sixteen ($m=16$) or thirty-two ($m=32$) is more efficient than classical exponential methods  for the Allen-Cahn problem \eqref{eqn:AllenCahn} on a spatial grid of size $64 \times 64$. {\sc epirkk4} with basis size four requires a large number of timesteps relative to both classical and {\sc epirkk4} executions with larger basis sizes, as shown in Figure \ref{fig:ALCN-error-vs-steps}. It also suffers from step rejections as indicated in Table \ref{Table:alcn-rejn}. As a consequence, despite being very cheap per timestep, {\sc epirkk4} performs the poorest among all {\sc epirkk4} executions. 

As the tolerance is tightened, all integrators, barring a few, are forced to take small steps that are approximately of the same size as seen in Figure \ref{fig:ALCN-error-vs-steps}. As a consequence, integrators that are cheap per timestep are more efficient for high accuracy solutions. {\sc epirkk4} with eight basis vectors ($m=8$) comes out on top in this regard.

{\sc epirkw3} with $\A_n=\J_n$ behaves like a classical method, but takes more steps in the high accuracy region due to its lower order, as shown in Figure \ref{fig:ALCN-error-vs-steps}. The same figure illustrates that {\sc epirkw3} with $\A_n = \textnormal{diag}(\J_n)$ takes significantly more steps than all other integrators and also suffers from step rejections in the low accuracy region (Table \ref{Table:alcn-rejn}). Both step rejection and small step size contribute to a poor performing integrator in its case. 

As the grid resolution is increased to $256 \times 256$, the picture is similar with some notable exceptions (see Figures \ref{fig:ALCN-work-prec-diag-kc-256}, \ref{fig:ALCN-work-prec-diag-wk-256} and \ref{fig:ALCN-error-vs-steps-256}). {\sc epirkk4} with basis size set to sixteen is no longer performing as well as before. Larger basis sizes are required to gain enough stability while remaining cheap per timestep. Although choosing the basis size adaptively for $K$-methods seems to be a natural next step, this has not been investigated as part of current work and will be considered as future extension.

Tables \ref{Table:alcn-accuracy-time} and \ref{Table:alcn-accuracy-time-256} summarize the results of the work precision diagrams in a tabular fashion where they show the best CPU time in which the integrators attain certain prescribed levels of accuracy for the Allen-Cahn problem \eqref{eqn:AllenCahn} on a $64 \times 64$ and $256 \times 256$ grid, respectively.

We also performed additional experiments with variable time stepping  on the Allen-Cahn problem \eqref{eqn:AllenCahn} ($64 \times 64$ grid) for different time spans ranging between [0, 0.2] units to [0, 9.6] units. Those results are not reported here in detail. We noticed that the relative performance of the classical methods and $K$-methods depends on the time span: in some instances the classical methods perform as well as the $K$-methods, while in other instances they perform more poorly than for the cases shown in the paper. We could not conclusively point to the cause of this variability in the relative performance.

\section{Conclusions}
\label{sec:Conclusions}

Exponential Propagation Iterative Methods of Runge-Kutta type (EPIRK) rely on the computation of exponential-like matrix functions of the Jacobian times vector products. This paper develops two new classes of EPIRK methods that allow the use of inexact Jacobians as arguments of the matrix functions. We derive a general order conditions theory for EPIRK-$W$-methods, which admit arbitrary approximations of the Jacobian, and for  EPIRK-$K$-methods, which use a specific Krylov-subspace based approximation of the Jacobian. We also provide a computational procedure to derive order conditions for methods with an arbitrary number of stages, and solve the order conditions of a three stage formulation to obtain coefficients for two third order EPIRK-$W$-methods, named {\sc epirkw3}, and a fourth order EPIRK-$K$-method, named {\sc epirkk4}. Several alternative implementations of  $W$-methods and $K$-methods are discussed, and their properties are studied.

Numerical experiments are conducted with three different test problems to study the performance of the new methods. The results confirm empirically that {\sc epirkw3}-method retains third order of convergence with different Jacobian approximations. The {\sc epirkk4}-method is computationally more efficient than {\sc epirkw3} and performs better than classical methods in a number of different scenarios considered in the paper. In particular {\sc epirkk4} outperforms {\sc epirkk4-classical}, a method with the same set of coefficients, but implemented in the classical framework for exponential integrators. More numerical experiments are needed to asses how these results generalize to different applications.

It was also observed that increasing the basis size can make {\sc epirkk4} more stable, but there is a cost to pay for it and that it is important to balance gains in stability against the increased computational cost. An adaptive approach to basis size selection for the $K$-methods is relevant in this regard and will be considered in future work.

\section*{Acknowledgements}

This work has been supported in part by NSF through awards NSF DMS-1419003, NSF DMS-1419105, NSF CCF-1613905, AFOSR FA9550-12-1-0293-DEF, and by the Computational Science Laboratory at Virginia Tech.

\appendix
\section{Derivation of $K$-methods}

The {\sc epirkk} method is derived from {\sc epirkw} method, where a specific Krylov-subspace approximation of the Jacobian is used instead of an arbitrary approximate Jacobian as admitted by the $W$-method. We start the derivation by first stating the general form of the {\sc epirkw} method:
\begin{equation}
\begin{split}
Y_i &= y_{n} + a_{i,1}\,  \psi_{i,1}(g_{i,1}\,\,h\,\A_n)\, hf(y_{n}) + \displaystyle \sum_{j = 2}^{i} a_{i,j}\, \psi_{i,j}(g_{i,j}\,h\,\A_{n})\, h\Delta^{(j-1)}r(y_{n}), \quad i = 1, \hdots, s - 1, \\
y_{n+1} &= y_{n}\, + b_{1}\, \psi_{s,1}(g_{s,1}\,h\,\A_{n})\, hf(y_{n}) + \displaystyle\sum_{j = 2}^{s} b_{j}\, \psi_{s,j}(g_{s,j}\,h\,\A_{n})\, h\Delta^{(j-1)}r(y_{n}).\nonumber
\end{split}
\end{equation}
In the above equation, the $\psi$ function is as defined in equation \eqref{eqn:psi_function_definition} and the following simplifying assumption is made about it: 
\begin{eqnarray}
\psi_{i,j}(z) &=& \psi_{j}(z) = \displaystyle\sum_{k=1}^{j} p_{j,k}\, \varphi_k(z), \nonumber
\end{eqnarray}
where $\varphi$ function is as defined in equation \eqref{eqn:phi_k}, \eqref{eqn:phi_k_recursive_formulation_and_phi_k_0}. Additionally, the remainder function ($r(y)$) and the forward difference operator ($\Delta^{(j)}r(Y_i)$) are defined accordingly below:
\begin{eqnarray}
r(y) &=& f(y) - f(y_{n}) - \mathbf{A}_{n} \, (y - y_{n}),\\ \nonumber
\Delta^{(j)}r(Y_i) &=& \Delta^{(j - 1)}r(Y_{i+1}) - \Delta^{(j - 1)}r(Y_{i}),\\ \nonumber
\Delta^{(1)}r(Y_i) &=& r(Y_{i+1}) - r(Y_i). \nonumber
\end{eqnarray}
The $K$-method uses a specific Krylov-subspace based approximation of the Jacobian ($\mathbf{J}_{n}$). An \textit{M}-dimensional Krylov-subspace is built as,
\begin{eqnarray}
\mathcal{K}_M = \text{span}\{f_n, \mathbf{J}_{n}f_n, \mathbf{J}_{n}^2f_n, \hdots, \mathbf{J}_{n}^{M-1}f_n\},
\end{eqnarray}
whose basis is the orthonormal matrix $\mathbf{V}$ and $\mathbf{H}$ is the upper-Hessenberg matrix obtained from Arnoldi iteration defined as
\begin{eqnarray}
\mathbf{H} &=& \mathbf{V}^{T} \mathbf{J}_{n} \,\mathbf{V}.
\end{eqnarray}
The corresponding Krylov-subspace based approximation of the Jacobian is built as 
\begin{eqnarray}
\mathbf{A}_{n} &=& \mathbf{V}\,\mathbf{H}\,\mathbf{V}^{T} = \mathbf{V}\mathbf{V}^{T} \mathbf{J}_{n} \,\mathbf{V}\mathbf{V}^{T}.
\end{eqnarray}
The use of Krylov-subspace based approximation of the Jacobian reduces the $\varphi$ and $\psi$ function in accordance with lemma \ref{Lemma:phik_reduced} and \ref{Lemma:psik_reduced} to the following 
\begin{eqnarray}
\varphi_k(h\,\gamma\,\mathbf{A}_{n}) &=& \frac{1}{k!} \left(\mathbf{I} - \mathbf{V}\mathbf{V}^{T}\right) + \mathbf{V}  \varphi_k(h\,\gamma\,\mathbf{H}) \mathbf{V}^{T},\\
\psi_{j}(h\gamma \mathbf{A}_{n}) &=& 
\widetilde{p}_j \left(\mathbf{I} - \mathbf{V}\mathbf{V}^{T}\right) +  \mathbf{V}\,\psi_{j}(h\gamma \mathbf{H}) \mathbf{V}^{T},
\end{eqnarray}
where $\widetilde{p}_j$ is defined as
\begin{eqnarray}
\widetilde{p}_j = \displaystyle\sum_{k=1}^{j}\frac{p_{j,k}}{k!}.
\end{eqnarray}
In order to derive the reduced stage formulation of the {\sc epirkk} method we need to resolve the vectors in the formulation into components in the Krylov-subspace and orthogonal to it. Repeating the splittings from the main text:

\begin{itemize}
\item Splitting the internal stage vectors noting that $Y_0 \equiv y_n$:
\begin{equation}
Y_i = \mathbf{V}\lambda_i + Y_i^{\bot} \qquad \textnormal{where} \quad
\mathbf{V}^{T} Y_i = \lambda_i, \quad
\left(\mathbf{I} - \mathbf{V}\mathbf{V}^{T}\right)\,Y_i = Y_i^{\bot}.
\end{equation}
\item  Splitting the right-hand side function evaluated at internal stage vectors while noting that $f_0 \equiv f(y_n)$:
\begin{equation}
f_i :=  f(Y_i) = \mathbf{V}\eta_i + f_i^{\bot} \qquad \textnormal{where} \quad
\mathbf{V}^{T} f_i = \eta_i, \quad
\left(\mathbf{I} - \mathbf{V}\mathbf{V}^{T}\right) f_i = f_i^{\bot}.
\end{equation}
\item Splitting the non-linear Taylor remainder of the right-hand side functions:
\begin{equation} 
\begin{split}
r(Y_i) &= f(Y_i) - f(y_{n}) - \mathbf{A}_{n}\, (Y_i - y_{n})  = f_i - f_0 - \mathbf{V}\, \mathbf{H}\, \mathbf{V}^{T}\, (Y_i - y_{n}), \\
\textnormal{where} \quad \mathbf{V}^{T}\, r(Y_i)  &= \eta_i - \eta_0 - \mathbf{H} \, (\lambda_i - \lambda_0), \\
\left(\mathbf{I} - \mathbf{V}\mathbf{V}^{T}\right)\, r(Y_i)  &= f_i^{\bot} - f_0^{\bot}.
\end{split}
\end{equation}
\item Splitting the forward differences of the non-linear remainder terms:
\begin{equation}
\begin{split}
\widetilde{r}_{(j-1)} &:=\Delta^{(j-1)}r(y_{n}) = \mathbf{V}\, d_{(j-1)} + \widetilde{r}_{(j-1)}^{\bot}, \\
\textnormal{where} &\quad \mathbf{V}^{T}\,\widetilde{r}_{(j-1)}   = d_{(j-1)}, \quad
\left(\mathbf{I} - \mathbf{V}\mathbf{V}^{T}\right)\, \widetilde{r}_{(j-1)}  = \widetilde{r}_{(j-1)}^{\bot}.
\end{split}
\end{equation}
\end{itemize}
Using these each internal stage of the {\sc epirkk} method can be expressed as below:
\begin{eqnarray}
\mathbf{V}\lambda_i + Y_i^{\bot} &=& y_{n}\, +  h\, a_{i,1} \bigg(\widetilde{p_1} \left(\mathbf{I} - \mathbf{V}\mathbf{V}^{T}\right) +  \mathbf{V}\,\psi_{1}(h\,g_{i,1}\, \mathbf{H}) \mathbf{V}^{T}\bigg) \bigg(\mathbf{V}\eta_0 + f_0^{\bot}\bigg)  \nonumber \\
&& + \displaystyle\sum_{j = 2}^{i} h\, a_{i,j}\, \bigg(\widetilde{p}_j \left(\mathbf{I} - \mathbf{V}\mathbf{V}^{T}\right) +  \mathbf{V}\,\psi_{j}(h\,g_{i,j}\, \mathbf{H}) \mathbf{V}^{T}\bigg) \bigg(\mathbf{V} d_{(j-1)} + \widetilde{r}_{(j-1)}^{\bot}\bigg) \nonumber \\
&=& y_{n}\, +  h\, a_{i,1} \bigg(\widetilde{p_1} \, f_0^{\bot} +  \mathbf{V}\,\psi_{1}(h\,g_{i,1}\, \mathbf{H}) \eta_0\bigg)  \nonumber \\
&& + \displaystyle\sum_{j = 2}^{i} h\, a_{i,j}\, \bigg(\widetilde{p}_j \, \widetilde{r}_{(j-1)}^{\bot} +  \mathbf{V}\,\psi_{j}(h\,g_{i,j}\, \mathbf{H}) \, d_{(j-1)}\bigg).
\label{eqn:stage_vector_expressed_in_K_space}
\end{eqnarray}
The reduced stage formulation of the {\sc epirkk} method is obtained by multiplying the above equation by $\mathbf{V}^{T}$ from the left
\begin{eqnarray}
\lambda_i &=& \mathbf{V}^{T} y_{n}\, + h \, a_{i,1} \psi_{1}(h\,g_{i,1}\,\mathbf{H}) \eta_0 + \displaystyle\sum_{j = 2}^{i} h\, a_{i,j}\, \psi_{j}(h\,g_{i,j}\, \mathbf{H}) \, d_{(j-1)} \nonumber \\
&=& \lambda_0 + h \, a_{i,1} \psi_{1}(h\,g_{i,1}\,\mathbf{H}) \eta_0 + \displaystyle\sum_{j = 2}^{i} h\, a_{i,j}\, \psi_{j}(h\,g_{i,j}\, \mathbf{H}) \, d_{(j-1)}.
\end{eqnarray}
And the full stage vector can be recovered by first computing the reduced stage vector and adding the orthogonal piece $Y_i^{\bot}$ obtained when multiplying equation \eqref{eqn:stage_vector_expressed_in_K_space} by $\left(\mathbf{I} - \mathbf{V}\mathbf{V}^{T}\right)$
\begin{eqnarray}
Y_i^{\bot} &=& \left(\mathbf{I} - \mathbf{V}\mathbf{V}^{T}\right) y_{n}\, + h \, a_{i,1} \, \widetilde{p_1} \, f_0^{\bot} + \displaystyle\sum_{j = 2}^{i} h\, a_{i,j}\, \widetilde{p}_j \, \widetilde{r}_{(j-1)}^{\bot} \nonumber \\
&=&(y_{n}\, - \mathbf{V} \lambda_0) + h \, a_{i,1} \, \widetilde{p_1} \, f_0^{\bot} + \displaystyle\sum_{j = 2}^{i} h\, a_{i,j}\, \widetilde{p}_j \, \widetilde{r}_{(j-1)}^{\bot}.
\end{eqnarray}
The final stage can also be written in the above form with multipliers $b_i$ in place of $a_{ij}$. Notice that the expensive computations are performed in the reduced space, i.e. the $\psi$ function is computed in the reduced space instead of the full space offering potential computational savings. In the above equations, 
the quantities $d_{(j-1)}$ and $\widetilde{r}_{(j-1)}^{\bot}$ can be shown to be 
\begin{eqnarray}
d_{(j-1)} &=& \displaystyle\sum_{k=0}^{j-1} \bigg((-1)^k {j-1 \choose k} \eta_{j-1-k} - \mathbf{H} \bigg((-1)^k {j-1 \choose k} \lambda_{j-1 -k}\bigg)\bigg),\\
\widetilde{r}_{(j-1)}^{\bot} &=& \displaystyle\sum_{k=0}^{j-1} \bigg((-1)^k {j-1 \choose k} f_{j-1-k}^{\bot}\bigg),
\end{eqnarray}
as is done in the following appendix.


\section{Proofs}\label{sec:AppendixB}

In order to prove equations \eqref{eqn:reduced_stage_vector_d} and \eqref{eqn:reduced_stage_vector_r}, we start with the definition of the remainder function and forward difference.

\begin{eqnarray}
r(y) &=& f(y) - f(y_{n}) - \mathbf{A}_{n} (y - y_{n}),
\end{eqnarray}

\begin{subequations}
\begin{eqnarray}
\Delta^{(j)}r(Y_i) &=& \Delta^{(j - 1)}r(Y_{i+1}) - \Delta^{(j - 1)}r(Y_{i}),\\
\Delta^{(1)}r(Y_i) &=& r(Y_{i+1}) - r(Y_i).
\end{eqnarray}
\end{subequations}

\begin{lemma}\label{Lemma:jth-forward-difference}
$\Delta^{(j)}r(Y_i) = \displaystyle\sum_{k=0}^{j} (-1)^k  {j \choose k} r(Y_{i+j-k})$.
\end{lemma}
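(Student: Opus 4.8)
The plan is to prove the identity by induction on the forward-difference order $j$, using only the recursive definition $\Delta^{(1)}r(Y_i) = r(Y_{i+1}) - r(Y_i)$ and $\Delta^{(j)}r(Y_i) = \Delta^{(j-1)}r(Y_{i+1}) - \Delta^{(j-1)}r(Y_i)$ together with Pascal's rule for binomial coefficients. The statement is the classical closed form for iterated forward differences, so the proof is essentially a bookkeeping argument and the induction is the natural machine for it.

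First I would verify the base case $j=1$. Substituting into the claimed formula gives $\sum_{k=0}^{1}(-1)^k {1 \choose k} r(Y_{i+1-k}) = r(Y_{i+1}) - r(Y_i)$, which is exactly the definition of $\Delta^{(1)}r(Y_i)$, so the base case holds. For the inductive step I assume the formula holds at order $j-1$ for every index, and then expand $\Delta^{(j)}r(Y_i) = \Delta^{(j-1)}r(Y_{i+1}) - \Delta^{(j-1)}r(Y_i)$. Applying the inductive hypothesis to each term yields $\Delta^{(j-1)}r(Y_{i+1}) = \sum_{k=0}^{j-1}(-1)^k {j-1 \choose k} r(Y_{i+j-k})$ and $\Delta^{(j-1)}r(Y_i) = \sum_{k=0}^{j-1}(-1)^k {j-1 \choose k} r(Y_{i+j-1-k})$.

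The key step is to align both sums against a common family of terms $r(Y_{i+j-m})$. In the first sum I keep $m = k$, so it contributes $\sum_{m=0}^{j-1}(-1)^m {j-1 \choose m} r(Y_{i+j-m})$. In the second (subtracted) sum I reindex by $m = k+1$, which turns $-\Delta^{(j-1)}r(Y_i)$ into $\sum_{m=1}^{j}(-1)^{m}{j-1 \choose m-1} r(Y_{i+j-m})$. Adding the two, the $m=0$ and $m=j$ boundary terms have coefficient $1 = {j \choose 0} = {j \choose j}$, and for each interior index $1 \le m \le j-1$ the coefficient is ${j-1 \choose m} + {j-1 \choose m-1} = {j \choose m}$ by Pascal's rule. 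This collapses the combined expression to $\sum_{m=0}^{j}(-1)^m {j \choose m} r(Y_{i+j-m})$, completing the induction.

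I do not anticipate a genuine obstacle here; the only point demanding care is the reindexing of the subtracted sum and the sign tracking, where one must confirm that $(-1)^{k}$ with $k=m-1$ becomes $(-1)^{m-1}$ and that the overall minus sign from the subtraction restores the correct $(-1)^m$. Beyond that, the argument is purely combinatorial and self-contained, relying on nothing outside the definitions already given in this appendix.
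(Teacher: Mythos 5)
Your proposal is correct and follows essentially the same route as the paper's own proof: induction on the difference order, expansion of $\Delta^{(j)}r(Y_i)=\Delta^{(j-1)}r(Y_{i+1})-\Delta^{(j-1)}r(Y_i)$ via the inductive hypothesis, a shift of the summation index in the subtracted sum, and Pascal's identity to merge coefficients. The handling of the boundary terms and signs matches the paper's argument, so there is nothing to add.
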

\begin{proof} In order to prove the lemma, we resort to mathematical induction. Base case $j = 1$,
\begin{eqnarray}
\Delta^{(1)}r(Y_i) &=&  \displaystyle\sum_{k=0}^{1} (-1)^k  {1 \choose k} r(Y_{i+1-k}) \nonumber \\
&=& r(Y_{i+1}) - r(Y_i). \nonumber
\end{eqnarray}
The base case is true by definition. We now assume that the proposition holds true for all $j$ up to $k-1$. We have,
\begin{eqnarray}
\Delta^{(k-1)}r(Y_i) &=&  \displaystyle\sum_{l=0}^{k-1} (-1)^l  {k-1 \choose l} r(Y_{i+k-1-l}), \\
\Delta^{(k-1)}r(Y_{i+1}) &=&  \displaystyle\sum_{l=0}^{k-1} (-1)^l  {k-1 \choose l} r(Y_{i+k-l}).
\end{eqnarray}
Then for $j = k$,
\begin{eqnarray}
\Delta^{(k)}r(Y_i) &=& \Delta^{(k - 1)}r(Y_{i+1}) - \Delta^{(k - 1)}r(Y_{i}) \nonumber \\
&=& \displaystyle\sum_{l=0}^{k-1}(-1)^l {k-1 \choose l} r(Y_{i+k-l}) - \displaystyle\sum_{l=0}^{k-1}(-1)^l {k-1 \choose l} r(Y_{i+k -1 -l})\nonumber \\
&=& r(Y_{i+k}) + \displaystyle\sum_{l=1}^{k-1}(-1)^l {k-1 \choose l} r(Y_{i+k-l}) - \displaystyle\sum_{l=0}^{k-2}(-1)^l {k-1 \choose l} r(Y_{i+k -1 -l}) + (-1)^k r(Y_i). \nonumber
\end{eqnarray}

We perform a change of variable for the second summation, $m = l + 1 \; \implies l = (m - 1)$,

\begin{eqnarray}
\Delta^{(k)}r(Y_i) &=& r(Y_{i+k}) + \displaystyle\sum_{l=1}^{k-1}(-1)^l {k-1 \choose l} r(Y_{i+k-l}) - \displaystyle\sum_{m=1}^{k-1}(-1)^{m-1} {k-1 \choose m - 1} r(Y_{i+k -m}) + (-1)^k r(Y_i) \nonumber \\
&=& r(Y_{i+k}) + \displaystyle\sum_{l=1}^{k-1}(-1)^l {k-1 \choose l} r(Y_{i+k-l}) + \displaystyle\sum_{m=1}^{k-1}(-1)^{m} {k-1 \choose m - 1} r(Y_{i+k -m}) + (-1)^k r(Y_i). \nonumber
\end{eqnarray}

The summations run between the same start and end indices, and can be collapsed.

\begin{eqnarray}
\Delta^{(k)}r(Y_i)&=& r(Y_{i+k}) + \displaystyle\sum_{l=1}^{k-1}(-1)^l \bigg({k-1 \choose l}  + {k-1 \choose l - 1}\bigg) r(Y_{i+k-l}) + (-1)^k r(Y_i). \nonumber
\end{eqnarray}

We use the identity,

\begin{eqnarray}
{k-1 \choose l}  + {k-1 \choose l - 1} = {k \choose l}, \nonumber
\end{eqnarray}

and arrive at the desired result,

\begin{eqnarray}
\Delta^{(k)}r(Y_i)&=& r(Y_{i+k}) + \displaystyle\sum_{l=1}^{k-1}(-1)^l {k \choose l} r(Y_{i+k-l}) + (-1)^k r(Y_i) \nonumber\\
&=& (-1)^0 {k \choose 0} r(Y_{i+k}) + \displaystyle\sum_{l=1}^{k-1}(-1)^l {k \choose l} r(Y_{i+k-l}) + (-1)^k {k \choose k} r(Y_i) \nonumber\\
&=& \displaystyle\sum_{l=0}^{k} (-1)^l  {k \choose l} r(Y_{i+k-l}).
\end{eqnarray}

\end{proof}

\begin{lemma}\label{Lemma:jth-forward-difference-components}
Given \begin{eqnarray}
\Delta^{(j-1)}r(y_{n}) = \mathbf{V} d_{(j-1)} + \widetilde{r}_{(j-1)}^{\bot},
\end{eqnarray}
we need to prove
\begin{eqnarray}
d_{(j-1)} = \displaystyle\sum_{k=0}^{j-1} \bigg((-1)^k {j-1 \choose k} \eta_{j-1-k} - \mathbf{H} \bigg((-1)^k {j-1 \choose k} \lambda_{j-1 -k}\bigg)\bigg),\\
\widetilde{r}_{(j-1)}^{\bot} = \displaystyle\sum_{k=0}^{j-1} \bigg((-1)^k {j-1 \choose k} f_{j-1-k}^{\bot}\bigg).
\end{eqnarray}
\end{lemma}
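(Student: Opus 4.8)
The plan is to reduce everything to the closed form for the forward difference established in Lemma~\ref{Lemma:jth-forward-difference} and then to project onto the two complementary subspaces. First I would specialize Lemma~\ref{Lemma:jth-forward-difference} to the base point $i=0$, recalling $Y_0 \equiv y_n$, to obtain
\[
\Delta^{(j-1)}r(y_n) = \sum_{k=0}^{j-1} (-1)^k \binom{j-1}{k}\, r(Y_{j-1-k}).
\]
Since $d_{(j-1)} = \mathbf{V}^{T}\Delta^{(j-1)}r(y_n)$ and $\widetilde{r}_{(j-1)}^{\bot} = (\mathbf{I}-\mathbf{V}\mathbf{V}^{T})\Delta^{(j-1)}r(y_n)$ by the splitting \eqref{eqn:Delta_r_Function_In_Krylov_subspace}, I would apply $\mathbf{V}^T$ and $\mathbf{I}-\mathbf{V}\mathbf{V}^T$ term by term and substitute the componentwise identities for $r(Y_i)$ from \eqref{eqn:R_Function_In_Krylov_subspace}, namely $\mathbf{V}^T r(Y_i) = \eta_i - \eta_0 - \mathbf{H}(\lambda_i - \lambda_0)$ and $(\mathbf{I}-\mathbf{V}\mathbf{V}^T)r(Y_i) = f_i^{\bot} - f_0^{\bot}$.

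This gives
\[
d_{(j-1)} = \sum_{k=0}^{j-1} (-1)^k \binom{j-1}{k}\Big(\eta_{j-1-k} - \mathbf{H}\lambda_{j-1-k}\Big) - \big(\eta_0 - \mathbf{H}\lambda_0\big)\sum_{k=0}^{j-1} (-1)^k \binom{j-1}{k},
\]
and an identical rearrangement for $\widetilde{r}_{(j-1)}^{\bot}$, with $f_{j-1-k}^{\bot}$ in place of $\eta_{j-1-k}-\mathbf{H}\lambda_{j-1-k}$ and $f_0^{\bot}$ in place of $\eta_0 - \mathbf{H}\lambda_0$. The key step is then to invoke the binomial identity $\sum_{k=0}^{n}(-1)^k\binom{n}{k} = (1-1)^n = 0$ with $n=j-1$, which annihilates the coefficient of the base-point terms and leaves exactly the claimed expressions for $d_{(j-1)}$ and $\widetilde{r}_{(j-1)}^{\bot}$.

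I expect the only real subtlety to be a boundary bookkeeping issue rather than anything deep: the identity $\sum_{k=0}^{n}(-1)^k\binom{n}{k}=0$ requires $n\ge 1$, so the cancellation of the $\eta_0$, $\mathbf{H}\lambda_0$ and $f_0^{\bot}$ contributions is only valid for $j\ge 2$, which is precisely the range in which the formulas are used in \eqref{eqn:full_stage_recovery} and in Algorithm~\ref{alg:EPIRK_algorithm}. The degenerate case $j=1$ should be noted separately: there $\Delta^{(0)}r(y_n)=r(y_n)=f(y_n)-f(y_n)-\mathbf{A}_n(y_n-y_n)=0$, so both $d_{(0)}$ and $\widetilde{r}_{(0)}^{\bot}$ vanish, consistent with the $i=1$ branch of Algorithm~\ref{alg:EPIRK_algorithm}. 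I would therefore carry out the cancellation argument for $j\ge 2$ and dispatch $j=1$ by this direct observation.
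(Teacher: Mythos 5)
Your proposal is correct and follows essentially the same route as the paper's proof: specialize Lemma~\ref{Lemma:jth-forward-difference} to $i=0$, eliminate the base-point contributions via the alternating binomial identity $\sum_{k=0}^{n}(-1)^k\binom{n}{k}=0$, and project onto $\mathbf{V}\mathbf{V}^T$ and its complement (the paper merely performs the cancellation in the full space before projecting, whereas you project first --- a cosmetic difference). Your explicit handling of the degenerate case $j=1$, where the binomial identity does not apply and one instead uses $r(y_n)=0$ directly, is a small point of care that the paper's argument leaves implicit.
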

\begin{proof}
We start with lemma \ref{Lemma:jth-forward-difference} where we have proven that 
\begin{equation*}
\Delta^{(j)}r(Y_i) = \displaystyle\sum_{k=0}^{j} (-1)^k  {j \choose k} r(Y_{i+j-k}).
\end{equation*}
We plugin the value $i = 0$ which corresponds to $\Delta^{(j)}r(Y_0) \equiv \Delta^{(j)}r(y_n)$ and we get,
\begin{equation*}
\Delta^{(j)}r(y_n) = \displaystyle\sum_{k=0}^{j} (-1)^k  {j \choose k} r(Y_{j-k}).
\end{equation*} WLOG replacing $j$ by $j-1$  yields 
\begin{equation}\label{eqn:forward-difference-remainder-function-sum}
\Delta^{(j-1)}r(y_n) = \displaystyle\sum_{k=0}^{j-1} (-1)^k  {j-1 \choose k} r(Y_{j-1-k}).
\end{equation}
Since the left-hand side of equation \eqref{eqn:forward-difference-remainder-function-sum} can be written as
\begin{eqnarray*}
\Delta^{(j-1)}r(y_{n}) = \mathbf{V} d_{(j-1)} + \widetilde{r}_{(j-1)}^{\bot},
\end{eqnarray*}
we have the following result
\begin{eqnarray}
\mathbf{V} d_{(j-1)} + \widetilde{r}_{(j-1)}^{\bot} = \displaystyle\sum_{k=0}^{j-1} (-1)^k  {j-1 \choose k} r(Y_{j-1-k}).
\label{eqn:alternating_sum_of_remainder_fn}
\end{eqnarray}
The remainder function $r(Y_i)$ is defined as
\begin{eqnarray*}
r(Y_i) = f(Y_i) - f(y_n) - \mathbf{A}_{n} * (Y_i - y_n).
\end{eqnarray*}
Plugging in the definition of remainder function in \eqref{eqn:alternating_sum_of_remainder_fn} and observing that $r(y_n) = 0$, we get
\begin{eqnarray}
\mathbf{V} d_{(j-1)} + \widetilde{r}_{(j-1)}^{\bot} &=& \displaystyle\sum_{k=0}^{j-2} (-1)^k  {j-1 \choose k} \bigg(f(Y_{j-1-k}) - f(y_n) - \mathbf{A}_{n} * \bigg(Y_{j-1-k} - y_n\bigg)\bigg)\nonumber\\
&=&  \displaystyle\sum_{k=0}^{j-2} (-1)^k  {j-1 \choose k} \bigg(f(Y_{j-1-k}) - \mathbf{A}_{n} * Y_{j-1-k}\bigg) - \nonumber\\ &&\displaystyle\sum_{k=0}^{j-2} (-1)^k  {j-1 \choose k} \bigg(f(y_{n}) - \mathbf{A}_{n} * y_{n}\bigg).
\label{eqn:subsitute_remainder_function_in_lemma_b1}
\end{eqnarray}
Consider the identity involving alternating sum and difference of binomial coefficients,
\begin{eqnarray}
\displaystyle \sum_{i = 0}^{k} (-1)^{i} {k \choose i} = 0.
\label{eqn:identity_of_binomial_coefficients}
\end{eqnarray}
Applying \eqref{eqn:identity_of_binomial_coefficients} to \eqref{eqn:subsitute_remainder_function_in_lemma_b1} we get,
\begin{eqnarray}
\mathbf{V} d_{(j-1)} + \widetilde{r}_{(j-1)}^{\bot} &=& \displaystyle\sum_{k=0}^{j-2} (-1)^k  {j-1 \choose k} \bigg(f(Y_{j-1-k}) - f(y_n) - \mathbf{A}_{n} * \bigg(Y_{j-1-k} - y_n\bigg)\bigg)\nonumber\\
&=&  \displaystyle\sum_{k=0}^{j-2} (-1)^k  {j-1 \choose k} \bigg(f(Y_{j-1-k}) - \mathbf{A}_{n} * Y_{j-1-k}\bigg) - \nonumber\\ && (-(-1)^{(j-1)}) \bigg(f(y_{n}) - \mathbf{A}_{n} * y_{n}\bigg) \nonumber\\
&=&  \displaystyle\sum_{k=0}^{j-2} (-1)^k  {j-1 \choose k} \bigg(f(Y_{j-1-k}) - \mathbf{A}_{n} * Y_{j-1-k}\bigg) + \nonumber\\ && (-1)^{(j-1)} {j-1 \choose j-1} \bigg(f(y_{n}) - \mathbf{A}_{n} * y_{n}\bigg) \nonumber\\
&=&  \displaystyle\sum_{k=0}^{j-1} (-1)^k  {j-1 \choose k} \bigg(f(Y_{j-1-k}) - \mathbf{A}_{n} * Y_{j-1-k}\bigg).
\end{eqnarray}
Additionally, since we are replacing the Jacobian by the approximation in the Krylov-subspace, i.e. $\mathbf{A}_{n} = \mathbf{V}\,\mathbf{H}\,\mathbf{V}^T$ we have
\begin{eqnarray}
\mathbf{V} d_{(j-1)} + \widetilde{r}_{(j-1)}^{\bot} &=& \displaystyle\sum_{k=0}^{j-1} (-1)^k  {j-1 \choose k} \bigg(f(Y_{j-1-k}) - \mathbf{V}\mathbf{H}\mathbf{V}^{T} * Y_{j-1-k}\bigg).
\label{eqn:penultimate_equation_of_lemma_d_r}
\end{eqnarray}
We get the expression for $d_{(j-1)}$ by multiplying equation \eqref{eqn:penultimate_equation_of_lemma_d_r} from the left by $V^{T}$ and that for $\widetilde{r}_{(j-1)}^{\bot}$ by multiplying by $(\mathbf{I} - \mathbf{V}\mathbf{V}^{T})$.

\end{proof}

\section{Structure of Jacobian evaluated at $y_0$ for the test problems}\label{sec:AppendixC}

\begin{figure}[!h]
  \centering
  \includegraphics[scale=0.35]{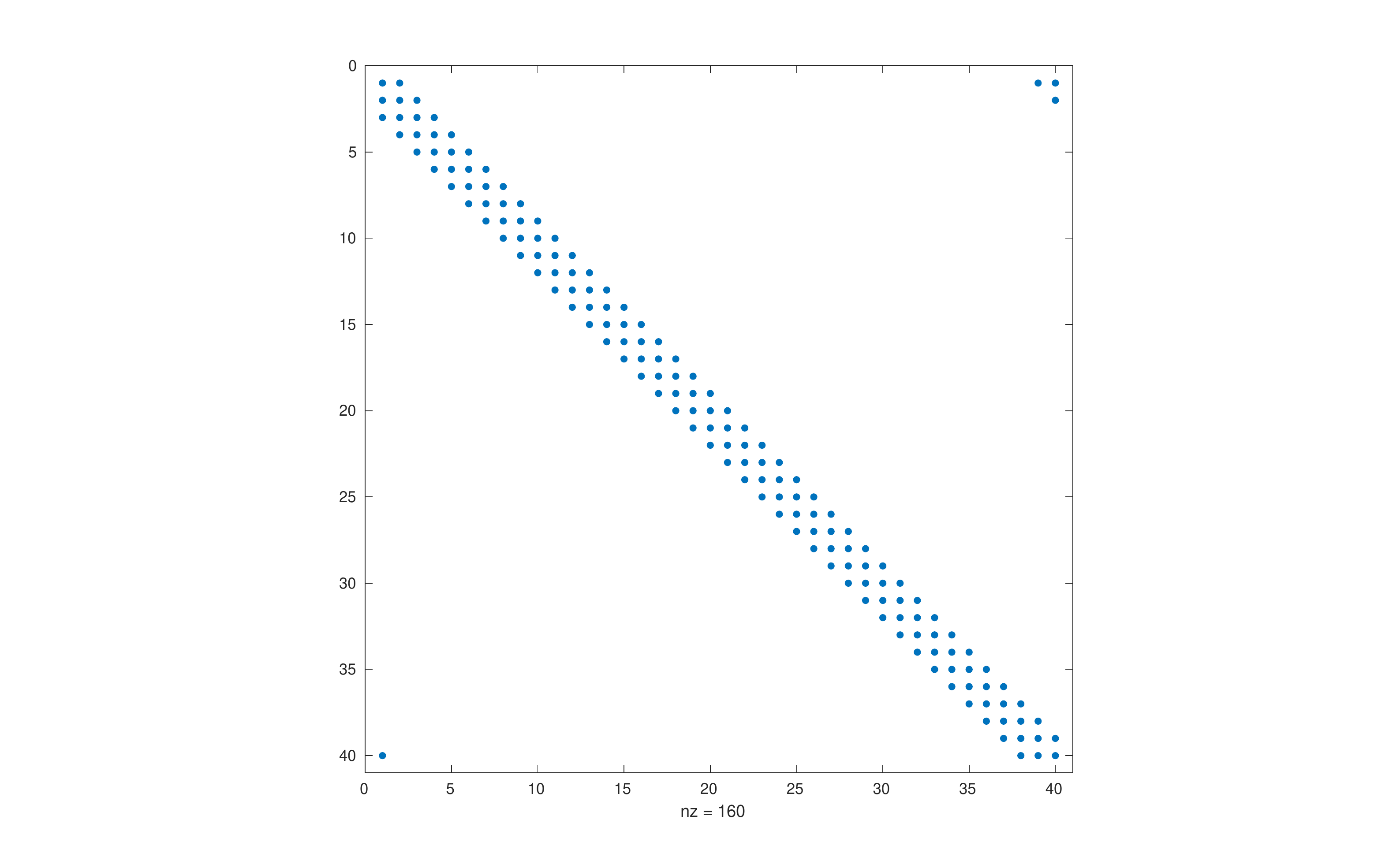}
  \caption{Structure of Jacobian evaluated at $y_0$ for the Lorenz-96 system \eqref{eqn:Lorenz}\label{fig:L96-Jac-y0}}
\end{figure}

\begin{figure}[!h]
  \centering
  \includegraphics[scale=0.35]{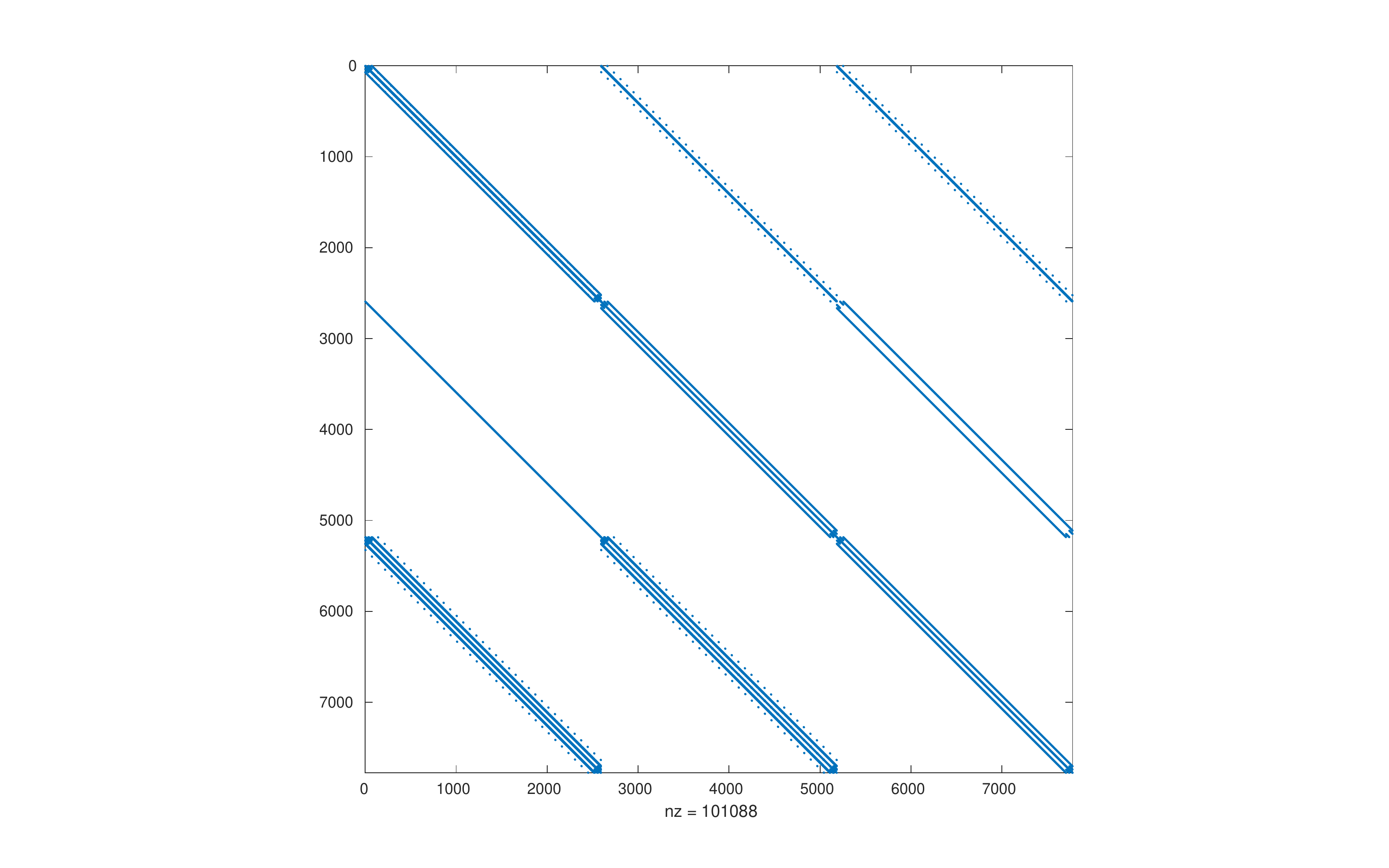}
  \caption{Structure of Jacobian evaluated at $y_0$ for the Shallow Water Equations \eqref{eqn:SWE}\label{fig:SWE-Jac-y0}}
\end{figure}

\begin{figure}[!h]
  \centering
  \includegraphics[scale=0.35]{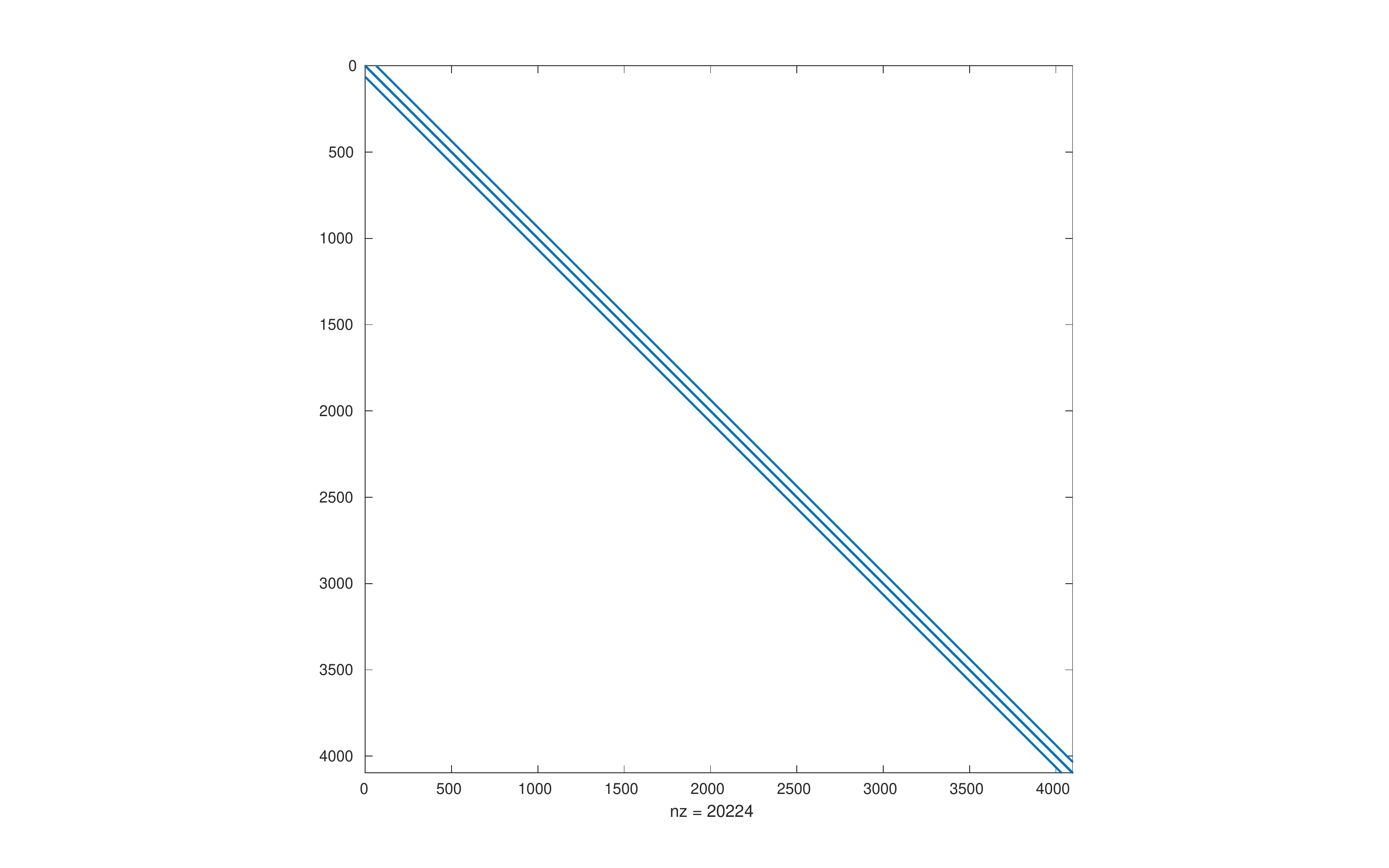}
  \caption{Structure of Jacobian evaluated at $y_0$ for the Allen-Cahn Equations \eqref{eqn:AllenCahn} on a $64 \times 64$ grid.\label{fig:Alcn-Jac-y0}}
\end{figure}

\clearpage

\section*{References}
\bibliographystyle{plain}
\bibliography{Master,ode_exponential,ode_general,ode_krylov}

\end{document}